\setlist[enumerate]{topsep=3mm,parsep=0pt,partopsep=0mm,itemsep=1mm,leftmargin=12mm,labelsep=2mm} 
 \numberwithin{equation}{section} 
\newtheorem{theorem}{Theorem}[section]  
\newtheorem{proposition}[theorem]{Proposition}
\newtheorem{lemma}[theorem]{Lemma}
\newtheorem{corollary}[theorem]{Corollary}
\theoremstyle{definition}  
\newtheorem{definition}[theorem]{Definition}
\newtheorem{notation}[theorem]{Notation}
\newtheorem{example}[theorem]{Example}
\newtheorem{remark}[theorem]{Remark}
\newcommand{\R}{\mathbb{R}}
\newcommand{\C}{\mathbb{C}}
\newcommand{\Z}{\mathbb{Z}}
\newcommand{\N}{\mathbb{N}}
\newcommand\T{{\mathbb T}}  
\newcommand\D{\mathbb D} 
\newcommand\cA{{\mathcal A}}  
\newcommand\cB{{\mathcal B}}  
\newcommand\cF{\mathcal{F}}
\renewcommand{\Re}{\text{\normalfont Re\,}} 
\renewcommand{\Im}{\text{\normalfont Im\,}} 
\renewcommand{\epsilon}{\varepsilon}  
\newcommand{\id}{{\rm id}}   
  \def\mathcomposite{%
     \@ifstar
        {\def\@mathcomposite@option{%
            \baselineskip\z@skip\lineskiplimit-\maxdimen}%
         \@mathcomposite}%
        {\let\@mathcomposite@option\offinterlineskip
         \@mathcomposite}}
  \def\@mathcomposite{%
     \@ifnextchar[\@@mathcomposite{\@@mathcomposite[0]}}
  \def\@@mathcomposite[#1]#2#3#4{%
     #2{\mathchoice
        {\@mathcomposite@{#1}{#3}{#4}\displaystyle{1}}%
        {\@mathcomposite@{#1}{#3}{#4}\textstyle{1}}%
        {\@mathcomposite@{#1}{#3}{#4}%
         \scriptstyle\defaultscriptratio}%
        {\@mathcomposite@{#1}{#3}{#4}%
         \scriptscriptstyle\defaultscriptscriptratio}}}
  \def\@mathcomposite@#1#2#3#4#5{%
     \vcenter{\m@th\@mathcomposite@option
        \dimen@\f@size\p@\dimen@#1\dimen@\dimen@#5\dimen@
        \divide\dimen@ 18
        \edef\@mathcomposite@skipamount{\the\dimen@}%
        \ialign{\hfil$#4##$\hfil\cr
           #2\crcr
           \noalign{\vskip\@mathcomposite@skipamount}%
           #3\crcr}}}
\newcommand{\utimes}{\kern-0.1ex\mathcomposite[-12]{\mathrel}{\cup}{\times}}                         
\newcommand{\sutimes}{\kern-0.1ex\mathcomposite[-13.2]{\mathrel}{\cup}{\times}\kern-0.1ex}    
\begin{document}

\title[Additive processes on the unit circle and Loewner chains]{Additive processes on the unit circle and Loewner chains}
\author[T. Hasebe]{Takahiro Hasebe}
\address{Department of Mathematics, Hokkaido University, North 10 West 8, Kita-Ku, Sapporo 060-0810, Japan}
\email{thasebe@math.sci.hokudai.ac.jp}
\author[I. Hotta]{Ikkei Hotta}
\address{Department of Applied Science, Yamaguchi University 2-16-1 Tokiwadai, Ube 755-8611, Japan}
\email{ihotta@yamaguchi-u.ac.jp}
\date{\today}

\begin{abstract}
This paper defines the notion of generators for a class of decreasing radial Loewner chains which are only continuous with respect to time. For this purpose,  ``Loewner's integral equation'' which generalizes Loewner's differential equation is defined and analyzed. The definition of generators is motivated by the L\'evy-Khintchine representation for additive processes on the unit circle. Actually, we can and do introduce a homeomorphism between the above class of Loewner chains and the set of the distributions of increments of additive processes equipped with suitable topologies. On the other hand, from the viewpoint of non-commutative probability theory, the above generators also induce bijections with some other objects: in particular, monotone convolution hemigroups and free convolution hemigroups.  Finally, the generators of Loewner chains constructed from free convolution hemigroups via subordination are computed. 
\end{abstract}
\subjclass[2010]{Primary 30C55; Secondary 46L53,46L54,60B15,60J67,60G51,}
\keywords{Loewner chain, evolution family, monotone convolution, free convolution, additive process, L\'evy-Khintchine representation, infinitely divisible distribution}

\maketitle

\tableofcontents

\section{Introduction}

\subsection{Background and overview of the main results}\label{sec:main}
A Loewner chain is a family of holomorphic mappings that describe an evolution of continuously increasing or decreasing family of domains in the complex plane, and is typically governed by a first order differential equation. Loewner chains had/have the main applications to the Bieberbach conjecture \cite{BDDM86} and SLE (Schramm-Loewner evolution) \cite{Lawler05}, where different kinds of Loewner chains have appeared. One kind appearing in the Bieberbach conjecture is called the radial Loewner chains, where the Denjoy-Wolff points, some special attractive fixed points of mappings of the Loewner chain, appear inside the domain. The other kind used for SLE is called the chordal Loewner chains, where the Denjoy-Wolff points appear on the boundary of the domain. 
A general theory of Loewner chains has recently been established in \cite{BCD2012, CDMG10}, which finally unified those classes of Loewner chains and their differential equations.  

On the other hand, in the context of (non-commutative) probability theory, bijections between the following three objects have been formulated in \cite{FHS18} (see \cite{Jek20,Sch17} for related works): 

\begin{enumerate}[label=\rm(\Roman*)]
\item\label{item:intro1} multiplicative Loewner chains on the unit disk (see Section \ref{sec4}), 
\item unitary multiplicative processes of monotonically independent increments, 
\item\label{item:intro3} monotonically homogeneous Markov processes on $\T$. 
\end{enumerate} 
There are various notions in each field: for example there are natural Loewner chains in terms geometry of its ranges (e.g.\ slit, starlike or convex domains), and there are various probabilistic notions on Markov processes. One of the motivations of \cite{FHS18} was to investigate how those notions in different fields can be interpreted from the viewpoint of those bijective correspondences. 

The main goal of the present paper is to establish a bijection from \ref{item:intro1} to a yet another object: 

\begin{enumerate}[label=\rm(\Roman*)]
\setcounter{enumi}{3}
\item\label{item:intro4} additive processes on $\T$ (see Section \ref{sec:additive}).   
\end{enumerate} 
This bijection is formulated in terms of a certain time-dependent infinitesimal generator. For additive processes, the existence of the generator is known as the L\'evy-Khintchine representation; on the other hand for Loewner chains, there is a notion of Herglotz vector fields which can be interpreted as a time-dependent infinitesimal generator. It looks natural to identify those generators and define a bijection between \ref{item:intro1} and \ref{item:intro4}; see the discussions in Section \ref{sec:monotone} for further details.  However, there is a difficulty: Herglotz vector fields are available only for differentiable (more precisely, absolutely continuous) Loewner chains with respect to time parameter, but the Loewner chains appearing in \ref{item:intro1} are, in general, only continuous with respect to time parameter, which is beyond the scope of the existing theory \cite{CDMG10}.

Therefore, the main issue is how to define a suitable generator for multiplicative Loewner chains. We will settle this issue by introducing ``Loewner integral equations'' which generalize differential equations. 
After all, the main results of this paper can be summarized in the following figure:

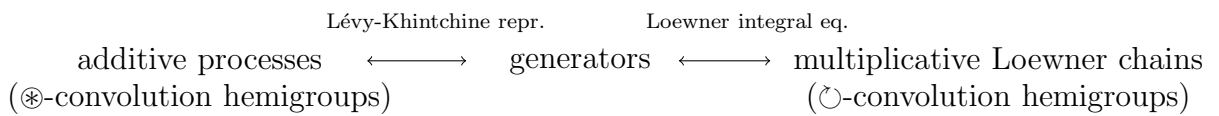
\begin{figure}[h]
\begin{tikzpicture}
\draw[<->] (1.7,0) -- (3.0,0); 
\draw[<->] (5.8,0) -- (7,0); 
\node at (-0.5,0) {additive processes};
\node at (-0.5,-0.5) {($\circledast$-convolution hemigroups)};
\node at (4.5,0) {generators};
\node at (10,0) {multiplicative Loewner chains};
\node at (10,-0.5) {($\circlearrowright$-convolution hemigroups)};
\node at (2.6,0.5)  {\tiny L\'evy-Khintchine repr.\ }; 
\node at (6.7,0.5)  {\tiny Loewner integral eq.\ }; 
\end{tikzpicture}
\caption{The main results}\label{fig:star}
\end{figure}

The appropriateness of the definition of generators can be confirmed from  convergence results: we introduce certain topologies on the three sets in Fig.\ \ref{fig:star} and show that the bijections are homeomorphisms. For example, the topology for Loewner chains is locally uniform convergence on time and space.

We will actually consider the set of certain equivalence classes of additive processes, which is in bijection with the set of continuous convolution hemigroups of probability measures with respect to the natural convolution, denoted by $\circledast$, on the unit circle. On the other hand, the set of multiplicative Loewner chains is in bijection with the set of continuous convolution hemigroups with respect to multiplicative monotone convolution, denoted by $\circlearrowright$, which describe the marginal laws of unitary multiplicative processes of monotonically independent increments in non-commutative probability theory. Further details can be found in Sections \ref{sec2} and \ref{sec4} and also in \cite{FHS18}. 

It is worth mentioning that our idea should work for additive processes on the real line and ``additive Loewner chains'' on the upper half-plane, at least under some extra assumptions. This will be discussed in another paper.

\subsection{Further backgrounds: limit theorems in non-commutative probability}
Bercovici and Pata formulated and proved an equivalence of limit theorems for convolutions of identical probability measures in classical probability and free probability \cite{BP99}. More precisely, let $\{\mu_n\}_{n\in \N}$ be a sequence of probability measures on $\R$ and $\{k_n\}_{n\in\N}$ be a sequence of strictly increasing natural numbers. Then the sequence 
\[
\lambda_n:=\underbrace{\mu_n \ast \mu_n \ast \cdots \ast \mu_n}_{\text{$k_n$ fold}} 
\]
converges weakly to some probability measure on $\R$ if and only if the same holds for 
\[
\tilde\lambda_n:=\underbrace{\mu_n \boxplus \mu_n \boxplus \cdots \boxplus \mu_n}_{\text{$k_n$ fold}}. 
\]
Moreover, if one (and hence both) of these convergences holds, the limit distributions of $\lambda_n$ and $\tilde\lambda_n$ are infinitely divisible and freely infinitely divisible, respectively. 
This leads to a bijection, called the Bercovici-Pata bijection, between the set of infinitely divisible distributions and the set of freely infinitely divisible distributions. Similar results were  established between classical and boolean probabilities as well in the same paper \cite{BP99} and between classical and monotone probabilities in \cite{AW14}.

 A generalization of Bercovici-Pata's theorem to non-identical probability measures was established by Chistyakov and Goetze in \cite{CG08a} as follows. Let $\{\mu_{n,k}\}_{k_n \ge k \ge 1, n \ge 1}$ be an infinitesimal triangular array of probability measures on $\R$, that is,  $\{k_n\}_{n\ge1}$ is a sequence of strictly increasing natural numbers as before, and 
 \begin{equation*}\label{eq:IA0}
 \lim_{n\to\infty}\sup_{1 \le k \le k_n} \mu_{n,k}(\{x: |x|\ge\epsilon\})=0
\end{equation*}
 for all $\epsilon>0$.  Let $\{a_n\}_{n\ge1}$ be a sequence of real numbers. Then 
 \begin{equation}\label{eq:CIA}
  \nu_n:=\delta_{a_n} \ast \mu_{n,1} \ast \mu_{n,2} \ast \cdots \ast \mu_{n,k_n}
\end{equation}
 converges weakly to some probability measure if and only if the same holds for 
\begin{equation}\label{eq:FIA}
\tilde\nu_n:= \delta_{a_n} \boxplus \mu_{n,1} \boxplus \mu_{n,2} \boxplus \cdots \boxplus \mu_{n,k_n}, 
\end{equation}
and if one of those convergences holds then the limits of $\nu_n$ and $\tilde\nu_n$ are related by the Bercovici-Pata bijection. A similar equivalence for limit theorems was established between free and boolean convolutions by Wang \cite{Wan08}. However, the limit theorem for monotone convolution is not equivalent. The main cause is that the limit distribution of monotone convolution of an infinitesimal triangular array is not monotonically infinitely divisible in general; see \cite[Proposition 6.37]{FHS18}. 

By contrast, for probability measures on the unit circle, limit theorems for infinitesimal triangular arrays in classical and free probabilities are not equivalent; one reason is that a L\'evy measure of an infinitely divisible distribution is not unique in general, see \cite[Remark 3, Chapter IV]{Par67} and \cite{CG08b,Ceb16,HH}. Accordingly, a Bercovici-Pata type bijection is not well defined. On the other hand, the L\'evy measure of any freely/boolean infinitely divisible distribution is unique, and correspondingly the equivalence of limit theorems holds true between free and boolean \cite{Wan08}. The limit theorem for monotone convolution is not equivalent to any other one of classical, free and boolean cases; the main cause is the same as the real line case: the limit distribution may not be monotonically infinitely divisible  \cite[Theorem 7.5]{FHS18} (an explicit counterexample may be constructed by imitating the arguments of \cite[Remark 6.15]{FHS18}).

Overall, one cannot hope for an equivalence of limit theorems for infinitesimal triangular arrays between classical and monotone probabilities, either on $\R$ or $\T$. The main results of this paper (that is, Figure \ref{fig:star}) can be regarded as an attempt to overcome this situation by switching to a ``dynamical viewpoint''; we consider all the marginal distributions of a process, rather than looking at a single marginal distribution of a process as in \eqref{eq:CIA} and \eqref{eq:FIA}.  
In this dynamical approach, for additive processes or convolution hemigroups, we can also recover the uniqueness of (time-dependent) L\'evy measures on $\T$ (Theorem \ref{thm:LK}) and, as a consequence, a bijection can be established between different kinds of convolution hemigroups.

\subsection{Organization of the paper}
Section \ref{sec2} starts by introducing basic notions, and then reformulates and summarizes the L\'evy-Khintchine representation of additive processes (or $\circledast$-convolution hemigroups) on the unit circle (Theorem \ref{thm:LK}). As a new result (as far as the authors know),  the convergence of $\circledast$-convolution hemigroups will be characterized by the convergence of the associated generators (Theorem \ref{thm:conv_classical}). For this purpose we introduce the notion of locally uniform weak convergence of a family of probability/finite measures.  

Section \ref{sec4} is the main part of this paper, where we introduce the Loewner integral equation (Section \ref{sec:LIE}), the bijection between the sets of multiplicative Loewner chains and additive processes (Section \ref{sec:gen}), and prove the equivalence of convergence of Loewner chains, the convergence of generators and the convergence of $\circlearrowright$-convolution hemigroups (Section \ref{sec:conv}). 

Section \ref{sec:c-f} is devoted to readers interested in free convolution and boolean convolution. We add two more objects to Fig.\ \ref{fig:star}: convolution hemigroups with respect to multiplicative free convolution and boolean one, and hence five objects are in bijection. We also prove all those bijections are homeomorphic with respect to suitable topologies.  At the end, the generators for the monotone convolution hemigroups associated with free convolution hemigroups are computed. 

Appendices \ref{sec:measure} and \ref{sec:univalent} in the end of this paper summarize several needed facts, which are more or less known but hard to find in the literature.

\subsection*{Acknowledgement} 
T.H.\ is supported by JSPS Grant-in-Aid for Early-Career Scientists 19K14546. 
I.H.\ is supported by JSPS Grant-in-Aid for Scientific Research(C) 20K03632.
This research is an outcome of Joint Seminar supported by JSPS and CNRS under the Japan-France Research Cooperative Program. 
This work was supported by JSPS Open Partnership Joint Research Projects grant no. JPJSBP120209921.
The authors express sincere thanks to Kouji Yano for finding errors of the manuscript, and Pavel Gumenyuk, Takuya Murayama, Sebastian Schlei{\ss}inger and Hiroshi Yanagihara for valuable suggestions and comments.

\section{Additive processes and convolution hemigroups on the unit circle}\label{sec2}

\subsection{Preliminaries on measure theory}\label{sec:measure0}

We summarizes notions on (Borel) measures. Basic results are summarized in Appendix \ref{sec:measure}. 

In this section, let $S$ be a metric space; we will actually take $S=\T$ or $S=\T \times [0,\infty)$ in  later applications. Let $\cB(S)$ be the set of Borel subsets of $S$ and $\cB_b(S)$ the set of bounded Borel subsets of $S$. A \emph{non-negative measure} on $S$ is a $[0,\infty]$-valued $\sigma$-additive function on $\cB(S)$ such that its evaluation at the empty set is zero.
A non-negative measure is referred to as: \emph{locally finite} if it takes finite values on $\cB_b(S)$; \emph{finite} if the total mass is finite; a \emph{probability measure} if the total mass is 1. 
A \emph{locally finite complex} (resp.\ \emph{signed}) \emph{measure} on $S$ is a $\C$-valued (resp.\ $\R$-valued) $\sigma$-additive function on $\cB_{b}(S)$.

\begin{definition} Let $\rho, \rho_n, n=1,2,3,\dots$ be  locally finite non-negative measures on $S$. We say that $\rho_n$ \emph{converges vaguely} to $\rho$ if 
\begin{equation}\label{eq:vague}
\lim_{n\to\infty} \int_S f(x) \,d\rho_n(x) =  \int_S f(x) \,d\rho(x) 
\end{equation}
for every bounded continuous function $f\colon S \to \C$ with bounded support. Moreover, if $\rho,\rho_n$ are finite non-negative measures on $S$, then we say that $\rho_n$ \emph{converges weakly} to $\rho$ if \eqref{eq:vague} holds for every bounded continuous function $f\colon S \to \C$. 
\end{definition}

\begin{remark} The above definitions of locally finite measures and vague convergence are taken from \cite{Kal,BP19}, but the definitions depend on the literature.  According to \cite{Bau01}, a non-negative measure is locally finite if every point of $S$ has an open neighborhood of finite mass with respect to this measure, and the vague convergence is defined by requiring \eqref{eq:vague} for all continuous functions with compact support. 
When all the closed bounded subsets are compact (which is valid for $S=\T$ and $S=\T \times [0,\infty)$), then the two definitions are equivalent. 
\end{remark}

We conclude this section by introducing a suitable notion of convergence for a sequence of families of finite non-negative measures. This will be a key concept throughout this paper.  
 
 \begin{definition}\label{def:local_weak} Let $T$ be a metric space and $(\tau_t^{(n)})_{t\in T},(\tau_t)_{t\in T}, n=1,2,3,\dots$ be families of finite non-negative measures on $S$. We say that $(\tau_t^{(n)})_{t\in T}$ \emph{weakly converges to $(\tau_t)_{t\in T}$ locally uniformly} on $T$ as $n\to \infty$ if 
 \begin{equation}\label{eq:local_weak}
\lim_{n\to\infty}\sup_{t \in B}\left|\int_S  f(x) \,d\tau_{t}^{(n)}(x)  -\int_S f(x) \,d\tau_{t}(x) \right|=0 
\end{equation}
for every bounded subset $B$ of $T$ and every bounded continuous function $f\colon S \to \C$.  
 \end{definition}
 In this paper, the index set $T$ is either $[0,\infty)$ or $\{(s,t): 0 \le s \le t <\infty\}$.

\subsection{L\'evy-Khintchine representation for additive processes} \label{sec:additive}

We will focus on the group $\T$, the group of complex numbers with modulus one,  but many facts in this section can be extended to locally compact abelian groups with countable basis of its topology. The main reference of this section is \cite{Heyer}. 

Two stochastic processes $(Y_t)_{t\ge0}$ and $(\tilde Y_t)_{t\ge0}$ on $\T$ are said to be \emph{stochastically equivalent} if 
\begin{equation}\label{eq:equiv}
\mathbb P(Y_{t_1} \in B_1, Y_{t_2}\in B_2, \dots, Y_{t_n} \in B_n) =\mathbb P(\tilde Y_{t_1} \in B_1, \tilde Y_{t_2}\in B_2, \dots, \tilde Y_{t_n} \in B_n)
\end{equation}
for all $n\in \N, t_1, \dots, t_n \ge0$ and $B_1, B_2,\dots, B_n \in \cB(\T)$. 

A stochastic process $(Y_t)_{t\ge0}$ on $\T$ is called an \emph{additive process} if $Y_0=1$ a.s., it has independent increments, that is, $Y_{t_{0}}^{-1} Y_{t_1}, Y_{t_1}^{-1} Y_{t_2}, \dots, Y_{t_{n-1}}^{-1} Y_{t_n}$ are independent for all $0 \leq t_0 \leq t_1 \le \cdots \le t_n$ and $n\in \N$, the mapping  $(s,t)\mapsto Y_{s}^{-1}Y_t$ is stochastically continuous, and its sample path is almost surely c\`adl\`ag, that is, right continuous with finite left limits. An additive process $(\tilde Y_t)_{t\ge0}$ is called a \emph{L\'evy process} if the distribution of $Y_s^{-1}Y_t$ depends only on $t-s$ for every $0 \le s \le t$.

There is a natural binary operation on the set of probability measures on $\T$, denoted by $\circledast$, arising from the group structure, but other associative binary operations from non-commutative probability will also be discussed.  Hence for generality, let $\star$ be an associative binary operation (convolution) on the set of Borel probability measures on $\T$. A family of probability measures $(\mu_{s,t})_{t\ge s\ge0 }$ on $\T$ is called a \emph{$\star$-convolution hemigroup} ($\star$-CH for short) if $\mu_{t,t}=\delta_1$ for every $t\ge0$ and $\mu_{s,t} \star \mu_{t,u}=\mu_{s,u}$ for every $0 \le s \le t \le u$.  It is furthermore said to be \emph{continuous} if $(s,t)\mapsto \mu_{s,t}$ is weakly continuous. If a $\star$-CH satisfies the time-homogeneity, $\mu_{s,t}=\mu_{0,t-s}$ for all $t\ge s\ge0 $, then the probability measures $\mu_t:=\mu_{0,t}$ form a $\star$-convolution semigroup, namely, one has $\mu_{s}\star\mu_t = \mu_{s+t}$ for all $s,t\ge0$ and $\mu_0=\delta_1$. 

Given an additive process $(Y_t)_{t\ge0}$ on $\T$ the distributions $\mu_{s,t}  = \mathbb P_{Y_s^{-1}Y_t}$ form a continuous $\circledast$-CH. Conversely, for any continuous $\circledast$-CH $(\mu_{s,t})_{t\ge s\ge0 }$ on $\T$ there exists an additive process $(Y_t)_{t\ge0}$ such that $\mu_{s,t}  = \mathbb P_{Y_s^{-1}Y_t}$ \cite[Remark 5.6.2]{Heyer} by taking the canonical coordinate process associated with the Markov transition kernels $k_{s,t}(x,\cdot):=\delta_x \circledast  \mu_{s,t}$ and then constructing its c\`adl\`ag modification by Dynkin--Kinney's theorem; the reader can also find detailed arguments in \cite[Theorems 9.7(ii) and 11.5]{Sat13} for the case of Euclidean spaces, which can be adjusted to our case $\T$. Note that this additive process is unique up to stochastic equivalence because the quantity \eqref{eq:equiv} can be computed only by $(\mu_{s,t})_{t\ge s\ge0 }$.  

The characteristic function of an additive process has a \emph{L\'evy-Khintchine representation} as follows. The results are more or less known, but some statements  are not very straightforward from the literature, so a proof is provided. 
   
\begin{theorem}[L\'evy-Khintchine representation] \label{thm:LK}
For a continuous $\circledast$-CH $(\mu_{s,t})_{t\ge s \ge 0}$ on $\T$ there exists a unique family $(\alpha_t,\sigma_t)_{t\ge0}$ such that 
\begin{enumerate}[label=\rm(LK\arabic*)]
\item   \label{LK1} $t\mapsto \alpha_t \in \R$ is a continuous function such that $\alpha_0=0$, 
\item\label{LK2} $\sigma_t$ is a finite non-negative measure on $\T$ such that $\sigma_0=0$ and the function $t\mapsto \sigma_t(B)$ is continuous and non-decreasing for every $B \in \cB(\T)$,    
\end{enumerate}
and 
\begin{equation}\label{eq:LKT}
\int_\T \xi^n \,d\mu_{0,t}(\xi) = \exp \left(i \alpha_t n+ \int_\T \frac{\xi^n-1- in  \Im(\xi)}{1-\Re(\xi)}\,\sigma_t(d\xi)  \right), \qquad n \in \Z.  
\end{equation}
Conversely, given a family $(\alpha_t,\sigma_t)_{t\ge0}$ satisfying \ref{LK1} and \ref{LK2} there exists a unique continuous $\circledast$-CH $(\mu_{s,t})_{t\ge s \ge 0}$ on $\T$ for which \eqref{eq:LKT} holds. The family $(\alpha_t,\sigma_t)_{t\ge0}$ is called the \emph{generating family} for the corresponding continuous $\circledast$-CH. 
\end{theorem}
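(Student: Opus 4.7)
My approach is the classical one, building $(\alpha_t,\sigma_t)$ via infinitesimal triangular arrays produced by the hemigroup structure. Fix $t>0$ and a refining sequence of partitions $0=t_0^{(n)}<t_1^{(n)}<\cdots<t_{N_n}^{(n)}=t$ with mesh tending to $0$. The hemigroup identity writes $\mu_{0,t}$ as a convolution product of the factors $\mu_{t_{k-1}^{(n)},t_k^{(n)}}$, and weak continuity of the hemigroup makes this an infinitesimal triangular array in the sense $\max_k \mu_{t_{k-1}^{(n)},t_k^{(n)}}(\T\setminus U)\to 0$ for every neighborhood $U$ of $1$. The classical limit theorem for such arrays on $\T$ (see \cite{Heyer} or \cite[Ch.~IV]{Par67}) then produces a L\'evy-Khintchine form for the limit, with generating pair obtained as the weak/numerical limit of
\[
\sigma_t^{(n)}(B):=\sum_{k}\int_B (1-\Re\xi)\,d\mu_{t_{k-1}^{(n)},t_k^{(n)}}(\xi),\qquad \alpha_t^{(n)}:=\sum_{k}\int_\T \Im\xi\,d\mu_{t_{k-1}^{(n)},t_k^{(n)}}(\xi).
\]
Existence of these limits and the representation \eqref{eq:LKT} follow from the theorem, while monotonicity of $t\mapsto\sigma_t(B)$ and continuity of $t\mapsto\sigma_t(B),\alpha_t$ come from the construction combined with weak continuity of the hemigroup in $(s,t)$.

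For uniqueness, write $g_n(\xi)$ for the integrand in \eqref{eq:LKT}, continuously extended by $-n^{2}$ at $\xi=1$. A direct computation gives $g_{1}\equiv -1$, $g_{2}(\xi)=-2(1+\xi)$, $g_{3}(\xi)=-(2\xi^{2}+4\xi+3)$, and inductively each $g_{n}$ is a trigonometric polynomial whose family $\{g_{n}\}_{n\in\Z}$ spans a dense subspace of $C(\T)$. If $(\alpha'_{t},\sigma'_{t})$ were a second admissible family then $F_{n}(t):=in(\alpha_{t}-\alpha'_{t})+\int_\T g_{n}\,d(\sigma_{t}-\sigma'_{t})$ takes values in $2\pi i\Z$, equals $0$ at $t=0$, and is continuous in $t$ by \ref{LK1}--\ref{LK2}; hence $F_{n}\equiv 0$ for every $n\in\Z$. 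Splitting $F_{1}\equiv 0$ into real and imaginary parts gives $\alpha_{t}=\alpha'_{t}$ together with $\sigma_{t}(\T)=\sigma'_{t}(\T)$, and the relations $F_{n}\equiv 0$ for $|n|\geq 2$ then pin down every Fourier coefficient of the signed measure $\sigma_{t}-\sigma'_{t}$, forcing $\sigma_{t}=\sigma'_{t}$.

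For the converse I would define $\mu_{s,t}$ by prescribing its Fourier coefficients through the analogue of \eqref{eq:LKT} with $(\alpha_{t}-\alpha_{s},\sigma_{t}-\sigma_{s})$ replacing $(\alpha_{t},\sigma_{t})$. Positivity is verified by approximating $\sigma_{t}-\sigma_{s}$ in weak topology by finitely supported non-negative measures $\sum_{j}c_{j}\delta_{\eta_{j}}$: each atom yields a factor $\exp(c_{j}g_{n}(\eta_{j}))$, which is the $n$-th Fourier coefficient of a compound Poisson-type probability measure on $\T$. The weak limit of the corresponding convolution products is then itself a probability measure with the prescribed Fourier coefficients. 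The hemigroup identity and weak continuity of $(s,t)\mapsto\mu_{s,t}$ are immediate from the additivity of the exponent in $(\alpha_{t}-\alpha_{s},\sigma_{t}-\sigma_{s})$ together with \ref{LK1}--\ref{LK2}.

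The principal obstacle is uniqueness: the L\'evy-Khintchine representation for a single infinitely divisible measure on $\T$ is genuinely non-unique, as the introduction emphasises, so the argument must essentially use both the continuity of the family $t\mapsto(\alpha_{t},\sigma_{t})$ and the pinning $\alpha_{0}=0$, $\sigma_{0}=0$ to eliminate the integer-shift ambiguities mod $2\pi i$ at every instant. A secondary subtlety is positivity in the converse, for which the compound Poisson approximation on $\T$ is the natural device.
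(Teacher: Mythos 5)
Your uniqueness argument is sound and essentially the paper's (the paper also passes through the integer-valued, continuous function $F_n$ vanishing at $t=0$; your observation that $g_1\equiv-1$, $g_2(\xi)=-2(1+\xi)$ and, inductively, that $g_n$ is a polynomial of degree $n-1$, so that $\{g_n\}_{n\in\Z}$ spans a dense subspace of $C(\T)$, is a legitimate replacement for the paper's citation of \cite{HH}), and your converse via compound-Poisson/wrapped-Gaussian approximation is in substance what the paper delegates to \cite[Theorem 7.1, Chapter IV]{Par67}. The genuine gap is in the direct existence step. The classical limit theorems for infinitesimal arrays on $\T$ do \emph{not} assert convergence of your uncentered sums $\sigma_t^{(n)}(B)=\sum_k\int_B(1-\Re\xi)\,d\mu_{t_{k-1}^{(n)},t_k^{(n)}}$ and $\alpha_t^{(n)}=\sum_k\int_\T\Im\xi\,d\mu_{t_{k-1}^{(n)},t_k^{(n)}}$; they require centering each factor, and here the centering is unavoidable because \ref{LK1} allows $\alpha_t$ to be merely continuous, possibly of unbounded variation, so the squared means of the increments contaminate the $\sigma$-sums. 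Concretely, take $\mu_{s,t}=\delta_{\exp(i(\alpha_t-\alpha_s))}$ with $\alpha$ continuous, $\alpha_0=0$, and with quadratic variation $q(t)>0$ along your chosen partitions (e.g.\ a fixed typical Brownian path and dyadic partitions). This is a continuous $\circledast$-CH whose generating family is $(\alpha_t,0)$, yet since $1-\cos x=\tfrac{x^2}{2}+O(x^4)$ and $\sin x=x+O(|x|^3)$ one gets
\[
\sigma_t^{(n)}\ \longrightarrow\ \tfrac{q(t)}{2}\,\delta_1\neq 0,
\qquad
\alpha_t^{(n)}\ \longrightarrow\ \alpha_t ,
\]
so your recipe outputs a pair for which the right-hand side of \eqref{eq:LKT} is $\exp(in\alpha_t-n^2q(t)/2)$, not the true $e^{in\alpha_t}$. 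Thus the construction does not merely lack justification; it produces the wrong generating family in general.

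Even if you center each increment (say by its mean direction), you still have to show that the centered sums converge along the whole sequence and that the resulting parameters can be chosen consistently in $t$ so as to be non-decreasing and continuous; this is delicate precisely because the L\'evy measure of a single $\circledast$-ID law on $\T$ is not unique (the exponent is only determined modulo $2\pi i$ at each $n\in\Z$), so subsequential limits need not agree and the parameters are not continuous functionals of the law. The paper avoids the whole issue by working at the process level: by \cite[Theorem 5.6.19 and Preparations 5.6.5]{Heyer}, $\pi_t(B)$ is the expected number of jumps of the associated c\`adl\`ag additive process lying in $B$ up to time $t$, which furnishes one canonical choice for all $t$ simultaneously, monotone in $t$ by construction; continuity of $t\mapsto\sigma_t(B)$ and of $\alpha_t$ is then extracted from $\sigma_t(\T)=-\log|\mathbb E[Y_t]|$, $e^{i\alpha_t}=\mathbb E[Y_t]/|\mathbb E[Y_t]|$ and neighborhood estimates. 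If you want a partition-based proof, you must incorporate centering and some canonical-choice (or process-level) mechanism of this kind rather than cite the array limit theorem as a black box.
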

\begin{remark}
For each $n\in \Z$ the function 
\[
\xi\mapsto  \frac{\xi^n-1- in  \Im(\xi)}{1-\Re(\xi)} 
\]
is set to be $-n^2$ at $\xi=1$, which makes it continuous on $\T$. 
\end{remark}
\begin{proof} It is proved in \cite[Theorem 5.6.19]{Heyer} that for every $t\ge0$ there exist $\alpha_t\in\R, v_t \ge0$ and a non-negative measure $\pi_t$ on $\T \setminus\{1\}$ such that $\int_\T (1-\Re(\xi))d\pi_t(\xi)<\infty$ and 
\begin{equation}\label{eq:standardLK}
\int_\T \xi^n \,d\mu_{0,t}(\xi)= \exp \left(i \alpha_t n - \frac{v_t}{2}n^2+ \int_\T \left(\xi^n-1- in  \Im(\xi)\right)\pi_t(d\xi)  \right), \qquad n \in \Z.  
\end{equation}
Denote by $(\tilde Y_t)_{t\ge0}$ an additive process corresponding to $(\mu_{s,t})$. According to \cite[Preparations 5.6.5]{Heyer}, for each $B \in \cB(\T)$ away from 1 (namely, the distance between $B$ and 1 is positive),  $\pi_t(B)$ is defined to be the expectation of the number of $s \in [0,t]$ such that $Y_s Y_{s-} \in B$. 
The non-negative finite measure 
\[
d\sigma_t(\xi) = \frac{1}{2}v_t \delta_1(d\xi) + (1-\Re(\xi))d\pi_t|_{\T\setminus\{1\}}(\xi)
\] 
on $\T$ then satisfies \eqref{eq:LKT}. Now we prove \ref{LK1} and \ref{LK2}. Since $e^{i\alpha_t} = \mathbb E[Y_t]/|\mathbb E[Y_t]|$ is continuous on $t$ and $Y_0=1$, the function $\alpha_t$ can be taken to be continuous such that $\alpha_0=0$.  The definition of $\pi_t$ implies that $\pi_0=0$, and by \eqref{eq:LKT} $v_0=0$, showing $\sigma_0=0$. 
The definition of $\pi_t$ and the proof of \cite[Lemma 5.6.13]{Heyer} show that the mapping $t\mapsto\pi_t(B)$ is non-decreasing and continuous for every $B \in \cB(\T)$ away from 1, and hence the same holds for $\sigma_t(B)$. Since $\sigma_t(\{1\})= v_t/2$ is non-decreasing on $t$ by \cite[Theorem 5.6.19]{Heyer}, we can see that $t\mapsto \sigma_t(B)$ is non-decreasing for every $B \in \cB(\T)$. It remains to show that $t\mapsto \sigma_t(B)$ is continuous for every $B\in\cB(\T)$. Since $\sigma_t(\T) = -\log |\mathbb E[Y_t]|$ is continuous on $t$, we conclude that $t\mapsto \sigma_t(U)=\sigma_t(\T)-\sigma_t(U^c)$ is continuous as well for every open neighborhood $U$ of $1$. Now fix $t_0>0$. For $0\le s ,t <t_0$ and every open neighborhood $U$ of $1$ the inequality 
\begin{align*}
|\sigma_t(\{1\}) - \sigma_s(\{1\})| 
&\le  |\sigma_t(U) - \sigma_s(U)|  +|\sigma_t(U\setminus\{1\}) - \sigma_s(U\setminus\{1\})| \\
&\le |\sigma_t(U) - \sigma_s(U)|  +2\sigma_{t_0}(U\setminus\{1\}) 
\end{align*}
holds. By taking $U$ small enough and then $t$ close to $s$, we conclude that $\lim_{t\to s} \sigma_t(\{1\}) =\sigma_s(\{1\})$. Therefore it suffices to show the continuity of $\sigma_t(B)$ for every $B \in \cB(\T\setminus\{1\})$. For such $B$ we have 
\begin{align*}
|\sigma_t(B) - \sigma_s(B)| 
&\le |\sigma_t(B \cap U^c) - \sigma_s(B\cap U^c)|   +|\sigma_t(B\cap U) - \sigma_s(B\cap U)| \\
&\le  |\sigma_t(B \cap U^c) - \sigma_s(B\cap U^c)| + 2 \sigma_{t_0}(U\setminus\{1\})
\end{align*}
for every open neighborhood $U$ of 1 and $0\le s,t < t_0$. Similarly to the previous case we conclude that $t\mapsto \sigma_t(B)$ is continuous.

For the uniqueness of $(\alpha_t,\sigma_t)_{t\ge0}$, suppose that $(\tilde\alpha_t,\tilde \sigma_t)_{t\ge0}$ is another family. There must exist a function $f\colon [0,\infty) \times \Z \to \Z$ such that 
\begin{equation}\label{eq:LK-unique}
i \alpha_t n+ \int_\T \frac{\xi^n-1- in  \Im(\xi)}{1-\Re(\xi)}\,\sigma_t(d\xi)=i \tilde\alpha_t n+ \int_\T \frac{\xi^n-1- in  \Im(\xi)}{1-\Re(\xi)}\,\tilde\sigma_t(d\xi) + 2\pi i f(t,n). 
\end{equation}
By the assumptions  \ref{LK1} and \ref{LK2} the function $f(\cdot, n)$ is continuous for every $n\in \Z$, and hence is constant, namely $f(t,n)=f(0,n)=0$. Setting $n=1$ in \eqref{eq:LK-unique} and taking the imaginary part show $\alpha_t=\tilde\alpha_t$. Lastly, by \cite[Propositions 6.2 and 8.2]{HH} we conclude that $\sigma_t =\tilde \sigma_t$. 

Conversely, given a generating family, the corresponding $\circledast$-CH $(\mu_{s,t})_{t\ge s\ge0 }$ exists by \cite[Theorem 7.1, Chapter IV]{Par67}, which is continuous by \ref{LK1} and \ref{LK2}. 
\end{proof}

By the hemigroup property one has the L\'evy-Khintchine representation for the increments 
\begin{equation}\label{eq:increments_c}
\int_\T \xi^n \,d\mu_{s,t}(\xi)= \exp \left(i \alpha_{s,t} n+ \int_\T \frac{\xi^n-1- in  \Im(\xi)}{1-\Re(\xi)}\,\sigma_{s,t}(d\xi)  \right), \quad n \in \Z,~ 0 \le s \le t,    
\end{equation}
where $\alpha_{s,t}= \alpha_t -\alpha_s$ and $\sigma_{s,t}=\sigma_t - \sigma_s$. 

In probability theory \eqref{eq:standardLK} is more common than \eqref{eq:LKT}. One advantage of \eqref{eq:standardLK} is that $\pi_t$, called a L\'evy measure, has a direct probabilistic interpretation by its definition. Moreover, \eqref{eq:standardLK} is much easier to generalize to other locally compact abelian groups. Nevertheless, we will use \eqref{eq:LKT} which is more suitable to compare with the integral representation of Herglotz functions.

In \eqref{eq:LKT} the family of measures $(\sigma_t)_{t\ge0}$ associates the non-negative measure $\Sigma$ on $\T \times [0,\infty)$ defined by 
\begin{equation}\label{eq:sigma}
\Sigma(B\times (s,t]) = \sigma_t(B) - \sigma_s(B)
\end{equation}
 for all $t > s \ge 0$ and $B\in \cB(\T)$ (see the proof of Proposition \ref{bi-measure} below). The non-negative measure $\Sigma$ obtained in this way is characterized by the following property: 
\begin{enumerate}[label=\rm(LK3)]
\item\label{LK3} $\Sigma$ is a locally finite non-negative measure on $\T \times [0,\infty)$ such that $\Sigma(\T\times\{t\})=0$ for every $t\ge0$.     
\end{enumerate}

 We may thus employ the parameterization $((\alpha_t)_{t\ge0},\Sigma)$ instead of $(\alpha_t,\sigma_t)_{t\ge0}$.

\subsection{Convergence of continuous $\circledast$-convolution hemigroups} 
 
 This section characterizes the locally uniform weak convergence of convolution hemigroups in terms of convergence of generating families. 
 
\begin{proposition} \label{prop:convergence_equivalence} Suppose that time-dependent finite non-negative measures $(\sigma_t)_{t\ge0}$, $(\sigma_t^{(n)})_{t\ge0}, n=1,2,3,\dots$ satisfy \ref{LK2}. Let $\Sigma^{(n)}$ and $\Sigma$ be the non-negative measures associated to $(\sigma_{t}^{(n)})_{t\ge0}$ and $ (\sigma_{t})_{t\ge0}$ via \eqref{eq:sigma}, respectively.  The following assertions are equivalent as $n\to\infty$: 
\begin{enumerate}[label=\rm(G\arabic*)]
\item\label{condG1} $(\sigma_t^{(n)})_{t\ge0}$ weakly converges to $(\sigma_t)_{t\ge0}$ locally uniformly on $[0,\infty)$; 

\item\label{condG2}  $\sigma_t^{(n)}$ converges weakly to $\sigma_t$ for every $t\ge0$; 

\item\label{condG3} $\Sigma^{(n)}$ converges vaguely to $\Sigma$; 

\item\label{condG4} $\Sigma^{(n)}|_{\T \times [0,T]}$ converges weakly to $\Sigma|_{\T \times [0,T]}$ as finite measures on $\T \times [0,T]$  for every $T >0$. 

\end{enumerate} 

\end{proposition}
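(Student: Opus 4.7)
My plan is to establish the four-way equivalence through the cycle
(G1) $\Leftrightarrow$ (G2), (G3) $\Leftrightarrow$ (G4), and (G2) $\Leftrightarrow$ (G4),
using as the two recurring ingredients the Dini--P\'olya theorem (pointwise convergence of non-decreasing real-valued functions to a continuous limit is automatically uniform on compact subintervals) and the continuity in $t$ of $\sigma_t(B)$ guaranteed by \ref{LK2}.

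\textbf{(G1) $\Leftrightarrow$ (G2).} The implication (G1) $\Rightarrow$ (G2) is immediate, as the singleton $\{t\}$ is bounded, so \eqref{eq:local_weak} with $B=\{t\}$ is just the assertion of weak convergence. For the converse, given a bounded continuous $f\colon\T\to\C$, decompose $f=(\Re f+M)-M+i[(\Im f+M)-M]$ with $M=\|f\|_\infty$. For any $g\ge 0$ bounded continuous, \ref{LK2} shows that $t\mapsto\int g\,d\sigma_t^{(n)}$ is non-decreasing, and it is continuous because \ref{LK2} forces weak continuity of $t\mapsto\sigma_t^{(n)}$; by (G2) it converges pointwise to the analogous continuous function for $\sigma_t$. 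Dini--P\'olya gives uniform convergence on every $[0,T]$, and linearity recovers (G1).

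\textbf{(G3) $\Leftrightarrow$ (G4).} A bounded continuous $h$ on $\T\times[0,\infty)$ with bounded support is supported in some $\T\times[0,T']$, so (G4) applied at any $T>T'$ immediately gives (G3). Conversely, for $T>0$ and $f\in C(\T\times[0,T])$, extend $f$ continuously to $\T\times[0,\infty)$ (for instance by setting $f(\xi,t):=f(\xi,T)$ for $t>T$) and multiply by a continuous cutoff $\phi_\epsilon\colon[0,\infty)\to[0,1]$ equal to $1$ on $[0,T]$ and vanishing on $[T+\epsilon,\infty)$. By (G3) one has $\int f\phi_\epsilon\,d\Sigma^{(n)}\to\int f\phi_\epsilon\,d\Sigma$, and the discrepancy $|\int f\phi_\epsilon\,d\Sigma^{(n)}-\int f\,d(\Sigma^{(n)}|_{\T\times[0,T]})|\le\|f\|_\infty\Sigma^{(n)}(\T\times(T,T+\epsilon])$ is controlled uniformly in $n$ by sandwiching the indicator of $(T,T+\epsilon]$ between continuous cutoffs, applying (G3), and invoking $\Sigma(\T\times\{T\})=0$ from \ref{LK3}; letting $\epsilon\to 0$ yields (G4).

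\textbf{(G2) $\Leftrightarrow$ (G4).} For (G4) $\Rightarrow$ (G2), fix $t_0\ge 0$ and $f\ge 0$ bounded continuous on $\T$; take continuous cutoffs $g_\delta^{\pm}\colon[0,T]\to[0,1]$ with $g_\delta^-$ equal to $1$ on $[0,t_0-\delta]$ and vanishing on $[t_0,T]$, and $g_\delta^+$ equal to $1$ on $[0,t_0]$ and vanishing on $[t_0+\delta,T]$, and bracket $\int f\,d\sigma_{t_0}^{(n)}$ between the integrals $\int f(\xi)g_\delta^{\pm}(t)\,d(\Sigma^{(n)}|_{\T\times[0,T]})(\xi,t)$. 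Letting $n\to\infty$ by (G4) and then $\delta\to 0$ using the continuity from \ref{LK2} yields (G2) for $f\ge 0$, and general $f$ reduces by the decomposition of the preceding step. For (G2) $\Rightarrow$ (G4), since $\T\times[0,T]$ is compact, Stone--Weierstrass reduces the problem to product functions $a(\xi)b(t)$ with $a\in C(\T)$, $b\in C[0,T]$. For $a\ge 0$, $t\mapsto\int a\,d\sigma_t^{(n)}$ is non-decreasing, continuous, and by (G2) converges pointwise to the continuous non-decreasing $t\mapsto\int a\,d\sigma_t$; Dini--P\'olya upgrades to uniform convergence, so the associated Lebesgue--Stieltjes measures on $[0,T]$ converge weakly, yielding $\int_0^T b(t)\,d[\int a\,d\sigma_t^{(n)}]\to\int_0^T b(t)\,d[\int a\,d\sigma_t]$. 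By Fubini this is $\int ab\,d(\Sigma^{(n)}|_{\T\times[0,T]})\to\int ab\,d(\Sigma|_{\T\times[0,T]})$, and complex $a$ reduces by the same additive shift. The main technical obstacle is the Dini--P\'olya upgrade, which is the single analytic ingredient distinguishing pointwise convergence from locally uniform convergence; all remaining work is cutoff bookkeeping enabled by \ref{LK2}.
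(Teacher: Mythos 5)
Your proof is correct, but it takes a genuinely different route from the paper's in two of the three blocks. The cutoff argument for \ref{condG3} $\Leftrightarrow$ \ref{condG4} is essentially the paper's. Elsewhere the logic differs: the paper proves \ref{condG2} $\Rightarrow$ \ref{condG3} by approximating the test function by functions piecewise constant in $t$ (a telescoping sum over partition points), proves \ref{condG3} $\Rightarrow$ \ref{condG2} via the Portmanteau theorem for vague convergence applied to the continuity sets $B\times[0,t]$, and reaches \ref{condG1} only from \ref{condG2} \emph{and} \ref{condG4} together, applying P\'olya's theorem solely to the total-mass functions $t\mapsto\Sigma^{(n)}(\T\times[0,t])$ and then running a partition/equicontinuity estimate. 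You instead link \ref{condG2} and \ref{condG4} directly: for \ref{condG2} $\Rightarrow$ \ref{condG4} you reduce by Stone--Weierstrass to product test functions $a(\xi)b(t)$ and exploit that, for $a\ge0$, the non-decreasing functions $t\mapsto\int a\,d\sigma_t^{(n)}$ converge (by P\'olya, uniformly) so their Stieltjes measures converge weakly; for \ref{condG4} $\Rightarrow$ \ref{condG2} you bracket $\mathbf 1_{[0,t_0]}$ between continuous cutoffs and use only the continuity in $t$ coming from \ref{LK2}, avoiding the set-wise Portmanteau machinery; and you get \ref{condG1} straight from \ref{condG2} by applying P\'olya to $t\mapsto\int g\,d\sigma_t^{(n)}$ for each fixed non-negative test function, which shortcuts the paper's partition argument. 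What your route buys is a more self-contained argument (P\'olya plus cutoff bookkeeping, no appeal to the vague/weak Portmanteau propositions from the appendix) and a cleaner derivation of \ref{condG1}; what the paper's route buys is that the measure-level statements \ref{condG3}/\ref{condG4} are handled by general-purpose lemmas that are reused elsewhere (e.g.\ in Theorems \ref{thm:conv_classical} and \ref{thm:conv_monotone}). Two small points you should make explicit if you write this up: in \ref{condG2} $\Rightarrow$ \ref{condG4} the Stone--Weierstrass approximation step needs $\sup_n \Sigma^{(n)}(\T\times[0,T])=\sup_n\sigma_T^{(n)}(\T)<\infty$, which follows at once from \ref{condG2} at $t=T$; and in \ref{condG3} $\Rightarrow$ \ref{condG4} the bound on $\Sigma^{(n)}(\T\times(T,T+\epsilon])$ obtained by sandwiching holds for all sufficiently large $n$ (not literally uniformly in $n$), which is all the limit argument requires.
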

\begin{proof} Clearly \ref{condG1} implies \ref{condG2}, and \ref{condG4} implies \ref{condG3}. 
We proceed as \ref{condG3} $\Rightarrow$ \ref{condG4}, \ref{condG2} $\Leftrightarrow$ \ref{condG3} and \ref{condG2}+\ref{condG4} $\Rightarrow$ \ref{condG1}.

\vspace{1mm}\noindent
\underline{\ref{condG3} $\Rightarrow$ \ref{condG4}.} 
Let $T>0$ and $g$ be a continuous (and hence bounded) function on $\T \times [0,T]$. We approximate the function $g$ by 
\[
g_k(\xi, t) = 
\begin{cases} 
g(\xi,t), & t \in [0,T], \\  
\max\{-k (t- T) + 1,0\} g(\xi,T), & t>T. 
\end{cases}  
\] 
Let $C:= \sup_{t\in [0,T], \xi\in \T } |g(\xi,t)|$. Note that $g_k$ is supported on $\T \times [0,T+1/k]$ and $|g_k| \le C$. By \ref{LK3} for $\Sigma$ and Proposition \ref{prop:portmanteau} \ref{item:conv3} for every $\epsilon>0$ there exists a $\delta>0$ such that 
\[
\Sigma(\T \times [T,T+\delta]) <\epsilon  \qquad \text{and} \qquad  \sup_{n\in\N} \Sigma^{(n)}(\T \times [T,T+\delta]) <\epsilon. 
\]
Then, for every $k \ge \delta^{-1}$ we have 
\[
\sup_{n\in \N}\left| \int_{\T \times [0,T]} g(\xi,t) \,\Sigma^{(n)}(d\xi dt) - \int_{\T \times [0,\infty)} g_k(\xi,t) \,\Sigma^{(n)}(d\xi dt)\right| \le C\epsilon 
\]
and 
\[
\left| \int_{\T \times [0,T]} g(\xi,t) \,\Sigma(d\xi dt) - \int_{\T \times [0,\infty)} g_k(\xi,t) \,\Sigma(d\xi dt)\right| \le C\epsilon. 
\]
By the assumption that $\Sigma^{(n)}$ converges vaguely to $\Sigma$, for a fixed $k \ge \delta^{-1}$ and sufficiently large $n$ we also have
\[
\left| \int_{\T \times [0,\infty)} g_k(\xi,t) \,\Sigma^{(n)}(d\xi dt) - \int_{\T \times [0,\infty)} g_k(\xi,t) \,\Sigma(d\xi dt)\right| \le \epsilon. 
\]
Combining the three estimates we conclude that 
\[
\left| \int_{\T \times [0,T]} g(\xi,t) \,\Sigma^{(n)}(d\xi dt) - \int_{\T \times [0,T]} g(\xi,t) \,\Sigma(d\xi dt)\right| \le (2C+1)\epsilon. 
\]

\vspace{2mm}\noindent\underline{\ref{condG3} $\Rightarrow$ \ref{condG2}.}
Fix $t\ge0$. We use Proposition \ref{prop:portmanteau_finite} \ref{item:conv_finite3}. Suppose that $B \in \cB(\T)$ is such that $\sigma_t(\partial B)=0$.  The relation $\partial (B \times [0,t]) = (\partial B\times [0,t)) \cup ((B\cup\partial B) \times \{t\})$ shows that $\Sigma(\partial (B \times [0,t]))=0$. Hence, thanks to Proposition \ref{prop:portmanteau}, we have
\[
\sigma_t^{(n)}(B) = \Sigma^{(n)}(B \times [0,t]) \to \Sigma(B \times [0,t])= \sigma_t(B)
\] 
as $n\to\infty$. 

\vspace{2mm}\noindent\underline{\ref{condG2} $\Rightarrow$ \ref{condG3}.}  Take a continuous function $f\colon \T \times[0,\infty)\to \C$ with compact support. Let $T>0$ be such that $f$ vanishes outside $\T \times [0,T]$.  The goal is to prove that 
\begin{equation}\label{eq:conv_Sigma}
\int_{\T\times [0,T]} f(\xi,t) \Sigma^{(n)}(d\xi dt) \to \int_{\T \times [0,T]} f(\xi,t) \Sigma(d\xi dt). 
\end{equation}
Let $t_k=t_{k}(\ell)=kT/\ell$ and let $f_\ell(\xi,t)= \sum_{k=0}^{\ell-1} f(\xi,t_k) \mathbf 1_{[t_k,t_{k+1})}(t)$. It is easy to see that $\lim_{\ell \to \infty}\|f_\ell -f \|_{\infty} =0$, and hence in \eqref{eq:conv_Sigma} replacing $f$ with $f_\ell$ for a sufficiently large $\ell$ yields only a small error uniformly on $n$. Therefore, it suffices to estimate
\begin{align}
&\int_{\T\times [0,T]} f_\ell(\xi,t) \Sigma^{(n)}(d\xi dt) - \int_{\T \times [0,T]} f_\ell(\xi,t) \Sigma(d\xi dt)    \label{eq:sigmas}   \\ 
&\qquad= \sum_{k=0}^{\ell-1} \left[\int_{\T} f(\xi,t_k) \,d\sigma^{(n)}_{t_{k+1}}(d\xi) -\int_{\T} f(\xi,t_k) \,d\sigma_{t_{k+1}}(d\xi)\right]   \notag  \\ 
& \qquad\qquad -  \sum_{k=0}^{\ell-1} \left[\int_{\T} f(\xi,t_k) \,d\sigma^{(n)}_{t_{k}}(d\xi) -\int_{\T} f(\xi,t_k) \,d\sigma_{t_{k}}(d\xi)\right]. \notag
\end{align}
The assumption \ref{condG2} implies that the RHS of \eqref{eq:sigmas} converges to 0 as $n\to\infty$, which establishes \eqref{eq:conv_Sigma}. 
 
\vspace{1mm}\noindent
\underline{\ref{condG2} and \ref{condG4}  $\Rightarrow$ \ref{condG1}.} Take $\epsilon, T>0$. By \ref{condG4}, $\Sigma^{(n)}(\T\times \cdot )$ converges weakly to $\Sigma(\T\times \cdot)$ as finite measures on $[0,T]$. Since the measure $\Sigma(\T,\cdot)$ does not have an atom, Polya's theorem \cite[Theorem 4.5]{BW16} implies that $\Sigma^{(n)}(\T\times[0,t])$ converges to $\Sigma(\T\times[0,t])$ uniformly on $[0,T]$. Combining this with the fact that $t\mapsto \Sigma(\T\times[0,t])$ is uniformly continuous on $[0,T]$, we conclude that there exists $\delta>0$ such that  
\[
\Sigma(\T,(s,t]) < \epsilon \qquad \text{and} \qquad \sup_{n\in\N} \Sigma^{(n)}(\T,(s,t]) < \epsilon
\]
for all $s,t \in [0,T]$ with $0 \le t-s \le \delta$. 
Now, take a partition $0=t_0 < t_1 < \cdots < t_N= T$ of the interval $[0,T]$ such that $\sup_{i \in \{1,\dots,N\}}|t_i-t_{i-1}| \le \delta$. By the assumption \ref{condG2} there exists $n_0 \in \N$ such that 
\[
\sup_{\substack {n\ge n_0 \\ 0 \le i \le N } }\left|\int_\T f(\xi)\,\sigma_{t_i}^{(n)}(d\xi) - \int_\T f(\xi)\,\sigma_{t_i}(d\xi)\right| < \epsilon. 
\]
 For $t \in [0,T]$ we take $i$ such that  $t\in [t_i,t_{i-1}]$; then 
\begin{align*}
&\left|\int_\T f(\xi)\,\sigma_{t}^{(n)}(d\xi) - \int_\T f(\xi)\,\sigma_{t}(d\xi)\right| \\
&\le  \left|\int_\T f(\xi)\,\sigma_{t}^{(n)}(d\xi) - \int_\T f(\xi)\,\sigma_{t_i}^{(n)}(d\xi)\right| +  \left|\int_\T f(\xi)\,\sigma_{t_i}^{(n)}(d\xi) - \int_\T f(\xi)\,\sigma_{t_i}(d\xi)\right| \\
&+  \left|\int_\T f(\xi)\,\sigma_{t_i}(d\xi) - \int_\T f(\xi)\,\sigma_{t}(d\xi)\right| \\
&\le \left\{ \Sigma^{(n)}(\T,(t,t_i] )  + \Sigma(\T,(t,t_i] )\right\} \|f\|_{L^\infty}+ \epsilon \\
& \le 2\epsilon  \|f\|_{L^\infty} + \epsilon, 
\end{align*}
whenever $n \ge n_0$. This verifies \ref{condG1}. 
 \end{proof}

We provide the main result of this section. 
 
 \begin{theorem}\label{thm:conv_classical}
Let $(\mu_{s,t})_{t\ge s \ge 0}, (\mu_{s,t}^{(n)})_{t\ge s \ge 0}$, $n=1,2,3,\dots$ be continuous $\circledast$-CHs and $(\alpha_t,\sigma_t)_{t\ge0}$, $(\alpha_t^{(n)},\sigma_t^{(n)})_{t\ge0}$ be their respective generating families. The following conditions are equivalent as $n\to\infty$. 
\begin{enumerate}[label=\rm(C\arabic*)]
\item\label{condC1} $(\mu_{0,t}^{(n)})_{t\ge0}$ weakly converges to $(\mu_{0,t})_{t\ge0}$ locally uniformly on $[0,\infty)$; 

\item\label{condC2} $(\mu_{s,t}^{(n)})_{t\ge s \ge 0}$ weakly converges to $(\mu_{s,t})_{t\ge s \ge 0}$ locally uniformly on the index set $\{(s,t): 0 \le s \le t <\infty\}$;  

\item\label{condC3} $(\alpha_t^{(n)})_{t\ge0}$ converges to $(\alpha_t)_{t\ge0}$ locally uniformly on $[0,\infty)$ and the mutually equivalent conditions \ref{condG1}--\ref{condG4} in Proposition \ref{prop:convergence_equivalence} hold. 
\end{enumerate} 
\end{theorem}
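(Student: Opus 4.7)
The implication \ref{condC2}$\Rightarrow$\ref{condC1} is immediate upon restricting to $s=0$. For the remaining two implications, the key idea is that on the compact group $\T$, weak convergence of uniformly mass-bounded measures is detected by Fourier coefficients, and the L\'evy--Khintchine formula \eqref{eq:LKT} expresses these Fourier coefficients as \emph{never-vanishing} exponentials of the generator data. Throughout, set $h_k(\xi):=(\xi^k-1-ik\Im(\xi))/(1-\Re(\xi))$, extended continuously by $h_k(1)=-k^2$, so that $\int_\T\xi^k\,d\mu_{s,t}=\exp(ik\alpha_{s,t}+\int_\T h_k\,d\sigma_{s,t})$. Note $h_1\equiv -1$ and $\Re h_k\le 0$. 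A short algebraic check (clearing denominators, using $\bar\xi=1/\xi$ on $\T$, and verifying vanishing to order two at $\xi=1$) shows each $h_k$ is a Laurent polynomial, so the family $\{h_k:k\in\Z\}$ spans all Laurent polynomials and is therefore uniformly dense in $C(\T)$ by Stone--Weierstrass.

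\emph{Step 1: \ref{condC1}$\Rightarrow$\ref{condC3}.} Since $\xi\mapsto\xi^k$ is bounded continuous, \ref{condC1} gives
\[
\Phi_n(k,t):=\int_\T\xi^k\,d\mu_{0,t}^{(n)}\longrightarrow \Phi(k,t):=\int_\T\xi^k\,d\mu_{0,t}
\]
locally uniformly on $[0,\infty)$, for every $k\in\Z$. Both sides are continuous in $t$, nowhere vanish, and equal $1$ at $t=0$. A standard monodromy argument lifts this to locally uniform convergence of the continuous $t$-branches of the logarithm vanishing at $t=0$:
\begin{equation}\label{eq:proof-convclassical-exponent}
ik\alpha_t^{(n)}+\int_\T h_k\,d\sigma_t^{(n)}\longrightarrow ik\alpha_t+\int_\T h_k\,d\sigma_t
\end{equation}
locally uniformly in $t$. (Concretely: on any $[0,T]$ the principal-branch logarithm of the ratio $\Phi_n/\Phi$ is well-defined for large $n$, continuous, vanishes at $t=0$, and tends uniformly to $0$; the difference with the a priori $2\pi i\Z$-valued continuous correction must be identically zero by the initial condition.) Setting $k=1$ in \eqref{eq:proof-convclassical-exponent} and taking real/imaginary parts produces $\sigma_t^{(n)}(\T)\to\sigma_t(\T)$ and $\alpha_t^{(n)}\to\alpha_t$ locally uniformly. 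Substituting the latter back into \eqref{eq:proof-convclassical-exponent} for general $k$ yields $\int h_k\,d\sigma_t^{(n)}\to\int h_k\,d\sigma_t$ locally uniformly, in particular pointwise in $t$. Since $\{h_k\}$ is uniformly dense in $C(\T)$ and $\sup_n\sigma_t^{(n)}(\T)<\infty$ for each $t$, a routine $3\epsilon$-argument promotes this to weak convergence $\sigma_t^{(n)}\to\sigma_t$ for every $t$, which is \ref{condG2}; the rest of \ref{condC3} then follows from Proposition \ref{prop:convergence_equivalence}.

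\emph{Step 2: \ref{condC3}$\Rightarrow$\ref{condC2}.} Condition \ref{condG1} and the triangle inequality give $\int g\,d\sigma_{s,t}^{(n)}\to\int g\,d\sigma_{s,t}$ locally uniformly in $(s,t)\in\{0\le s\le t\}$ for every bounded continuous $g$ on $\T$; combined with the locally uniform convergence $\alpha_{s,t}^{(n)}\to\alpha_{s,t}$, the L\'evy--Khintchine representation \eqref{eq:increments_c} yields
\[
\sup_{(s,t)\in B}\left|\int_\T \xi^k\,d\mu_{s,t}^{(n)}-\int_\T \xi^k\,d\mu_{s,t}\right|\longrightarrow 0
\]
for every $k\in\Z$ and every bounded $B\subset\{0\le s\le t\}$, hence for every trigonometric polynomial. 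Given arbitrary bounded continuous $f$ and $\epsilon>0$, choose a trigonometric polynomial $P$ with $\|f-P\|_\infty<\epsilon$; since the $\mu_{s,t}^{(n)}$ and $\mu_{s,t}$ are probability measures,
\[
\limsup_{n\to\infty}\sup_{(s,t)\in B}\left|\int_\T f\,d\mu_{s,t}^{(n)}-\int_\T f\,d\mu_{s,t}\right|\le 2\epsilon,
\]
and letting $\epsilon\downarrow 0$ gives \ref{condC2}.

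\emph{Main obstacle.} The only non-routine step is \eqref{eq:proof-convclassical-exponent}, i.e.\ lifting Fourier-coefficient convergence to convergence of the exponents. This rests on three facts that are already in hand: the target exponential is continuous and non-vanishing, the sequence and the limit share the initial value $1$ at $t=0$, and the convergence is uniform on compacts; once these are in place the monodromy argument is standard. Everything else reduces to Stone--Weierstrass density and an appeal to Proposition \ref{prop:convergence_equivalence}.
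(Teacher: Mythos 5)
Your argument is correct, and on the key implication \ref{condC1} $\Rightarrow$ \ref{condC3} it takes a genuinely different route from the paper. The paper first recovers $\alpha_t^{(n)}\to\alpha_t$ and $\Sigma^{(n)}(\T\times[0,t])\to\Sigma(\T\times[0,t])$ from the first Fourier coefficient (essentially your $k=1$ step), but then proceeds by soft compactness: Prohorov's theorem gives a weakly convergent subsequence of $\Sigma^{(n)}|_{\T\times[0,T]}$, the subsequential limit is shown to satisfy \ref{LK1}--\ref{LK2}, and passing to the limit in \eqref{eq:LKT} together with the uniqueness of the generating family (Theorem \ref{thm:LK}, which rests on the cited results of [HH]) identifies the limit as $\sigma_t$, whence the whole sequence converges and \ref{condG2} holds. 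You avoid compactness and uniqueness altogether: the monodromy lifting of the Fourier-coefficient convergence to convergence of the continuous logarithm branches is valid (the limit exponential is non-vanishing and continuous, all branches vanish at $t=0$, so the $2\pi i\Z$-valued discrepancy is constant and zero), and your observation that each kernel $h_k$ is a Laurent polynomial whose collective span is dense in $C(\T)$ — correct, since the numerator has a double zero at $\xi=1$ and $h_k=-2\xi^{k-1}+\cdots$ — turns convergence of $\int h_k\,d\sigma_t^{(n)}$ for all $k$, together with the mass bound from $k=1$, directly into \ref{condG2}. In effect you re-prove the needed injectivity of $\sigma\mapsto(\int h_k\,d\sigma)_{k\in\Z}$ instead of citing it, making the argument more self-contained and subsequence-free, at the cost of the explicit algebraic verification about the $h_k$; the paper's route is softer and reuses machinery (Prohorov, Portmanteau, Theorem \ref{thm:LK}) already established for other purposes. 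The remaining implications \ref{condC2} $\Rightarrow$ \ref{condC1} and \ref{condC3} $\Rightarrow$ \ref{condC2} coincide with the paper's (Stone--Weierstrass on the moments via \eqref{eq:increments_c}); there the only point worth recording is the one you implicitly use, namely that the exponents in \eqref{eq:increments_c} have non-positive real part, so locally uniform convergence of exponents passes to the moments.
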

\begin{proof} The implication \ref{condC2} $\Rightarrow$ \ref{condC1} is obvious.

\vspace{2mm}\noindent
\underline{\ref{condC3} $\Rightarrow$ \ref{condC2}.}  By the Stone-Weierstrass theorem, it suffices to consider the test functions $f(\xi)=\xi^k, k\in \Z$ for condition \ref{condC2}. Then the conclusion is immediate from \eqref{eq:increments_c} and \ref{condG1}.

\vspace{2mm}\noindent
\underline{\ref{condC1} $\Rightarrow$ \ref{condC3}.} Fix $T>0$. Since 
\[
 \exp (i \alpha_t^{(n)})= \frac{\int_\T \xi \,d\mu_{0,t}^{(n)}(\xi)}{\left|\int_\T \xi \,d\mu_{0,t}^{(n)}(\xi)\right|}    
 \qquad\text{and}\qquad
   \exp (i \alpha_t)= \frac{\int_\T \xi \,d\mu_{0,t}(\xi)}{\left|\int_\T \xi \,d\mu_{0,t}(\xi)\right|}
\]
the function $\exp (i \alpha_t^{(n)})$ converges to $\exp (i \alpha_t)$ uniformly on $[0,T]$. The same conclusion holds for $\alpha_t^{(n)}$ and $\alpha_t$ since they are continuous and vanish at $t=0$. 

The identities  
\begin{equation}
\left|\int_\T \xi \,d\mu_{0,t}^{(n)}(\xi)\right| = e^{-\Sigma^{(n)}(\T\times[0,t])}
 \qquad\text{and}\qquad
\left|\int_\T \xi \,d\mu_{0,t}(\xi)\right| = e^{-\Sigma(\T\times[0,t])}
\end{equation}
and the assumption \ref{condC1} imply that the function $\Sigma^{(n)}(\T\times[0,t])$ converges to $\Sigma^{(n)}(\T\times[0,t])$ uniformly on $[0,T]$. In particular, the sequence $\{\Sigma^{(n)}|_{\T\times [0,T]}\}_{n\ge1}$ is uniformly bounded and hence has a weakly convergent subsequence by Prohorov's theorem (see Theorem \ref{thm:Prohorov}), the limit finite non-negative measure being denoted by $\tilde \Sigma$. We also set $\tilde \sigma_t(\cdot) = \tilde \Sigma(\cdot, [0,t])$ for $t \in [0,T]$. From the weak convergence and Proposition \ref{prop:portmanteau} \ref{item:conv3}, we deduce that $\Sigma^{(n)}(\T\times[0,t])$ converges to $\tilde \Sigma(\T\times[0,t])$ at all $t\in [0,T]$ where $\tilde \Sigma(\T\times[0,t])$ is continuous. Since $\Sigma^{(n)}(\T\times[0,t])$ is uniformly converging to $ \Sigma(\T\times[0,t])$  and $\tilde \Sigma(\T\times[0,t])$ is right-continuous, we conclude that $\tilde \Sigma(\T\times[0,t]) = \Sigma(\T\times[0,t])$, and hence $(\alpha_t,\tilde \sigma_t)_{t\in [0,T]}$ satisfies \ref{LK1} and \ref{LK2} until time $T$.  Moreover, the proof of Proposition \ref{prop:convergence_equivalence} \ref{condG3} $\Rightarrow$ \ref{condG2} shows that $\sigma_t^{(n)} \to \tilde \sigma_t$ weakly for each $t\in [0,T]$. Passing to the limit $n\to\infty$ in the formula 
\[
\int_\T \xi^k \,d\mu_{0,t}^{(n)}(\xi) = \exp \left(i \alpha_t^{(n)} k+ \int_\T \frac{\xi^k-1- ik  \Im(\xi)}{1-\Re(\xi)}\,\sigma_t^{(n)}(d\xi)  \right), \qquad k \in \Z,~t\in [0,T], 
\]
we obtain 
\[
\int_\T \xi^k \,d\mu_{0,t}(\xi) = \exp \left(i \alpha_t k+ \int_\T \frac{\xi^k-1- ik  \Im(\xi)}{1-\Re(\xi)}\,\tilde \sigma_t(d\xi)  \right), \qquad k \in \Z,~t\in [0,T].  
\]
By the uniqueness of the generating family (Theorem \ref{thm:LK}), we have $\tilde \sigma_t= \sigma_t$ for every $t\in[0,T]$. The above arguments demonstrate that the whole sequence $\{\sigma_t^{(n)}\}_{n\ge1}$ converges weakly to $\sigma_t$ for every $t$ in $[0,T]$ and hence in $[0,\infty)$. 
\end{proof}

\section{Bijection between classical and monotone convolution hemigroups}\label{sec4}
The aim of this section is to define a time-dependent generator for a general multiplicative Loewner chain and then formulate a bijection between additive processes (up to equivalence) and multiplicative Loewner chains, or equivalently, classical and monotone convolution hemigroups. For this purpose we introduce and analyze ``Loewner's integral equation''.

\subsection{Multiplicative Loewner chains and monotone convolution hemigroups}

We start from recalling the definition of multiplicative Loewner chains from \cite[Definition 3.1]{FHS18}.

\begin{definition} Let $D$ be a simply connected domain of $\C$ such that $D \neq \C$. 
A two-parameter family $(f_{s,t})_{0 \le s \le t < \infty}$ of holomorphic self-mappings $f_{s,t} \colon D \to D$ is said to be a \textit{reverse evolution family}  if the following conditions are satisfied;
\begin{enumerate}[label=\rm(TM\arabic*)]
\item \label{TM1} $f_{s,s}(z) =z$ for all $z \in D$ and $s \ge 0$,
\item \label{TM2} $f_{s,t} \circ f_{t,u} = f_{s,u}$ for all $0 \le s \le t \le u < \infty$,
\item \label{TM3} $(s,t)\mapsto f_{s,t}$ is continuous with respect to locally uniform convergence on $D$,   
\end{enumerate} 
and each mapping $f_{s,t}$ is called a \textit{transition mapping}. 
The family $(f_{t})_{t \ge 0} := (f_{0,t})_{t \ge 0}$ is called a \textit{(decreasing) Loewner chain} on $D$. 
Moreover, $(f_t)_{t\ge0}$ is called a \textit{multiplicative Loewner chain} if $D = \D$ and $f_{t}(0)=0$ for all $t \in [0,\infty)$. 
\end{definition}

\begin{remark}
\begin{enumerate}[label=(\alph*)]
\item By \ref{TM2} the inclusion relation $f_{s}(D) \supset f_{t}(D)$ holds for all $s<t$, so that $(f_t)_{t\ge0}$ is called a decreasing Loewner chain.  

\item It was shown in \cite[Theorem 3.16]{FHS18} that all transition mappings $f_{s,t}$, and hence $f_{t}$ as well, are univalent.

\item Under a further regularity assumption, such a family $(f_{s,t})_{0 \le s \le t}$ is an example of a reverse evolution family of order $d$ in Loewner theory \cite{CDMG14}. 

\end{enumerate}
\end{remark}

\begin{notation}
 Sometimes the notation $f(z,t)$ is used for $f_t(z)$. In addition to the standard notation for partial derivatives, we also use the notation $\dot f$ or $\dot f_t$ for the $t$-derivative and $f_t'$ for the $z$-derivative. We will discuss later sequences of Loewner chains, which will be denoted as $(f_t^{(n)})_{t\ge0}$, not to be confused with higher order derivatives. 
\end{notation}

The set of multiplicative Loewner chains is in bijection with the set of continuous convolution hemigroups with respect to multiplicative monotone convolution \cite[Section 5.4]{FHS18} as follows. 

For probability measures $\mu$ and $\nu$ on $\T$, the \emph{multiplicative  monotone convolution} $\mu\circlearrowright \nu$ is the distribution of $U V$ where $U$ and $V$ are unitaries distributed as $\mu$ and $\nu$ respectively, such that $(U-I,V-I)$ is monotonically independent \cite{Ber05}. 
The binary operation $\circlearrowright$ can be characterized by the composition of a certain transform defined as follows.  For a probability measure $\mu$ the \emph{moment generating function} is defined by 
\[
\psi_\mu(z) = \sum_{n\ge1} z^n \int_{\T} \xi^n \,d\mu(\xi) = \int_{\T} \frac{z \xi}{1-z \xi} \,d\mu(\xi), \qquad z \in \D.   
\]
A related function $\eta_\mu$, called the \emph{$\eta$-transform} of $\mu$,  is then defined by 
\[
\eta_\mu(z) =\frac{\psi_\mu(z)}{1+\psi_\mu(z)}, 
\] 
which is a holomorphic self-mapping of $\D$. 
It is proved in \cite{Ber05} that 
\begin{equation}\label{eq:monotone_conv}
\eta_{\mu \circlearrowright \nu} = \eta_\mu \circ \eta_\nu \quad \text{on} \quad \D. 
\end{equation}
Note that the mapping $\mu \mapsto \eta_\mu$ gives a bijection
\begin{equation}\label{eq:homeo}
\{\text{probability measures on $\T$}\} \to \{f\colon \D\to \D\mid \text{holomorphic,  $f(0)=0$}\},  
\end{equation}
which was proved in \cite[Proposition 3.2]{BB05}. It is furthermore a homeomorphism with respect to weak convergence and locally uniform convergence \cite[Lemma 2.11]{FHS18}. 

It is clear from \eqref{eq:monotone_conv} and \eqref{eq:homeo} that the set of continuous $\circlearrowright$-CHs is in bijection with the set of multiplicative Loewner chains. 
To make it more explicit, a continuous $\circlearrowright$-CH $(\nu_{s,t})_{0 \le s \le t}$ (see Section \ref{sec:additive} for the definition) associates the family of analytic self-mappings $\eta_{s,t}=\eta_{\nu_{s,t}}$ of $\D$ which fixes $0$ and satisfies $\eta_{s,u} = \eta_{s,t}\circ \eta_{t,u}$ and $\eta_{s,s}= \id$ for all $0 \le s \le t \le u$, and the mapping $(s,t) \mapsto \eta_{s,t}$ is continuous with respect to locally uniform convergence, and thus $(\eta_{0,t})_{t\ge0}$ is a multiplicative Loewner chain.  Conversely a multiplicative Loewner chain associates the continuous $\circlearrowright$-CH as well.

\subsection{Ideas from generators of moments}\label{sec:monotone}
This section exhibits ideas for defining a bijection between continuous $\circledast$-CHs and $\circlearrowright$-CHs omitting detailed mathematical proofs. The point is to identify derivatives of moments of two kinds of CHs. 

Suppose that $(\mu_{s,t})_{t\ge s \ge 0}$ is a continuous $\circledast$-CH such that its generating family is smooth enough; for example 
\begin{equation}\label{eq:gen}
\alpha_t = \int_0^t \gamma_s \,ds \quad \text{and}\quad \sigma_t(B)= \int_0^t \rho_s(B)\,ds,  
\end{equation}
where $t\mapsto\gamma_t$ is continuous on $[0,\infty)$, and $\rho_t$ is a non-negative finite measure for every $t\ge0$ such that $t\mapsto \rho_t(B)$ is continuous for every $B \in \cB(\T)$. By \eqref{eq:increments_c} we deduce the time derivative of moments
\begin{equation}\label{eq:c_moments1}
\left. \frac{\partial}{\partial t}\right|_{t=s} \int_\T \xi^n \,d\mu_{s,t}(\xi)  = i \gamma_s n + \int_\T \frac{\xi^n-1- in  \Im(\xi)}{1-\Re(\xi)}\,\rho_s(d\xi), \qquad n \in \Z,~s \ge0. 
\end{equation}

On the other hand, suppose that $(\nu_{s,t})_{t\ge s \ge 0}$ is a continuous $\circlearrowright$-CH whose transition mappings $\eta_{s,t}:=\eta_{\nu_{s,t}}$ satisfy the Loewner differential equation from \cite[Proposition 3.12]{FHS18},  
\begin{equation}\label{eq:LDE}
\partial_t \eta_{s,t}(z) = - z p(z,t)\partial_z \eta_{s,t}(z), \qquad t \ge s,   
\end{equation}
where $p$ is of the form 
\begin{equation}\label{eq:MHVF}
p(z,t) = - i \gamma_t + \int_{\T} \frac{1+z\xi}{1-z\xi} \,d \rho_t(\xi).   
\end{equation}
Recall that $-z p(z,t)$ is called the (multiplicative) Herglotz vector field (of any order) for the Loewner chain $(\eta_{t})_{t\ge0}$ in \cite{CDMG14} or \cite{FHS18}. 
 Setting $t=s$ in \eqref{eq:LDE} we have $\partial_t \eta_{s,t}(z)|_{t=s} = - z p(z,s).$ Since $\psi_{\nu_{s,t}}(z) = \eta_{s,t}(z)/(1-\eta_{s,t}(z))$ we obtain 
\[
\partial_t \psi_{\nu_{s,t}}(z)|_{t=s} = - \frac{z}{(1-z)^2} p(z,s) = \frac{z}{(1-z)^2}\left[ i \gamma_s - \int_{\T} \frac{1+z\xi}{1-z\xi} \,d \rho_s(\xi)  \right]. 
\]
Using the identity
\begin{align*}
-\frac{z}{(1-z)^2} \cdot\frac{1+z\xi}{1-z\xi} 
&= \frac{1}{1-\Re(\xi)} \left[-  \frac{i \Im(\xi)z}{(1-z)^2} - \frac{z(1-\xi)}{(1-z)(1-z\xi)}\right] \\
&=   \sum_{n=1}^\infty z^n \frac{ \xi^n - 1 - in \Im(\xi)}{1-\Re(\xi)}, 
\end{align*}
and exchanging the sum and integral using the inequality \cite[Lemma 5.2]{HH}
\[
\left|\frac{\xi^n-1-i n\Im(\xi)}{1-\Re (\xi)}\right| \le n^3, \qquad \xi \in \T, ~n \in \N,  
\]
 we get 
\begin{equation}\label{eq:p1}
\partial_t \psi_{\nu_{s,t}}(z)|_{t=s} = \sum_{n\ge1} z^n\left[ i \gamma_s n +\int_{\T} \frac{\xi^n - 1 - i n \Im(\xi)}{1-\Re(\xi)} \, d \rho_s(\xi)\right], \qquad z \in \D.
\end{equation}
On the other hand, we can verify that 
\begin{equation}\label{eq:psi1}
\partial_t \psi_{\nu_{s,t}}(z)|_{t=s} = \sum_{n\ge1} z^n \left. \frac{\partial}{\partial t}\right|_{t=s} \int_{\T} \xi^n \,d\nu_{s,t}(\xi);  
\end{equation}
namely the sum and derivative commute, using the fact that $t\mapsto\eta_{s,t}(z)$ and hence $t\mapsto\psi_{s,t}(z)$ is of $C^1$, Cauchy's integral formula for the derivatives $\partial_z^n\psi_{s,t}(0)$, and Lebesgue's dominated convergence. Comparing the coefficients of \eqref{eq:p1} and \eqref{eq:psi1} together with formula \eqref{eq:c_moments1} we obtain
\begin{equation}\label{eq:f_moments}
 \left. \frac{\partial}{\partial t}\right|_{t=s} \int_{\T} \xi^n \,d\nu_{s,t}(\xi)=  \left. \frac{\partial}{\partial t}\right|_{t=s} \int_\T \xi^n \,d\mu_{s,t}(\xi), \qquad n \in \N,  
\end{equation}
which actually holds for $n \in \Z$ by complex conjugate. 

It is natural from \eqref{eq:f_moments} to define a map, denoted $\Theta_M$, sending the above $\circledast$-CH $(\mu_{s,t})_{t\ge s \ge 0}$ to the $\circlearrowright$-CH $(\nu_{s,t})_{t\ge s \ge 0}$, identifying ``time-dependent generators'' of moments of two kinds of CHs. This correspondence is also natural in terms of a ``dynamical Bercovici-Pata bijection'' in non-commutative probability; see Section \ref{sec:c-f}. 

 All those arguments at least require the differentiability of the parameters $(\alpha_t,\sigma_t)$. The main goal of Section \ref{sec4} is to extend the map $\Theta_M$ to  the set of all continuous $\circledast$-CHs, where the parameters $(\alpha_t,\sigma_t)$ satisfy assumptions \ref{LK1} and \ref{LK2} only. The problem is how to extract a generating family from a given continuous $\circlearrowright$-CH or a multiplicative Loewner chain. This can be done by generalizing the Loewner differential equation to the ``Loewner integral equation'' developed in Section \ref{sec:LIE}.

\subsection{The radial Loewner integral equation}\label{sec:LIE}

The Loewner integral equation we will introduce is governed by a certain driving measure-valued holomorphic function in the following sense.  

\begin{definition} Let $D$ be a domain of the complex plane. 
A family $\{Q(z,\cdot)\}_{z \in D}$ of locally finite complex measures on $[0,\infty)$ is called a \textit{Herglotz family of measures} (\textit{H-family} in short) if the following two conditions are satisfied;
\begin{enumerate}[label=\rm(HF\arabic*)]
\item \label{HF1} for each $B \in\cB_{b}([0,\infty))$ the function $z\mapsto Q(z,B)$ is holomorphic on $D$,
\item \label{HF2}  $\Re[Q(z,B)]\ge0$ for all $z\in D$ and $B \in \cB_{b}([0,\infty))$.   
\end{enumerate} 
An H-family $\{Q(z,\cdot)\}_{z \in D}$ is said to be \textit{continuous} if $Q(z,\{t\})=0$ for all $t \in [0,\infty)$ and $z\in D$. 
\end{definition}

\begin{proposition} \label{bi-measure}
A continuous H-family $\{Q(z,\cdot)\}_{z \in \D}$ has the form
\begin{equation}\label{G-measure}
Q(z,B) = i \Phi(B) + \int_\T \frac{1+\xi z}{1-\xi z} \,\Pi(d\xi \times B),   \qquad z \in \D, ~B \in \cB_{b}([0,\infty)), 
\end{equation}
where $\Phi$ is a locally finite signed measure on $[0,\infty)$ and $\Pi$ is a locally finite non-negative measure on $\T \times [0,\infty)$ such that $\Pi(\T \times \{t\})=0$ for every $t\ge0$. The pair $(\Phi,\Pi)$ is unique. Conversely, for such $\Phi$ and $\Pi$, \eqref{G-measure} generates a continuous H-family $\{Q(z,\cdot)\}_{z \in \D}$. 
\end{proposition}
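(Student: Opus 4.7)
The plan is to apply the classical Herglotz representation slice by slice in $B$ and then promote the resulting family $(\tau_B)_B$ to a single measure $\Pi$ on $\T \times [0,\infty)$ via its one-parameter time slices. Uniqueness and the converse direction are then routine.

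For each fixed $B \in \cB_b([0,\infty))$, the function $z\mapsto Q(z,B)$ is holomorphic on $\D$ with non-negative real part by \ref{HF1} and \ref{HF2}, so the classical Herglotz representation theorem gives unique $c_B \in \R$ and a finite non-negative Borel measure $\tau_B$ on $\T$ with
\[
Q(z,B) = i c_B + \int_\T \frac{1+\xi z}{1-\xi z}\,d\tau_B(\xi), \qquad z \in \D,
\]
and $c_B = \Im Q(0,B)$, $\tau_B(\T) = \Re Q(0,B)$. Setting $\Phi(B) := c_B$ exhibits $\Phi$ as the imaginary part of the locally finite complex measure $Q(0,\cdot)$, hence a locally finite signed measure on $[0,\infty)$.

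Next I would promote $(\tau_B)_B$ to a measure $\Pi$. Uniqueness of the Herglotz representation applied to $Q(z,B_1\sqcup B_2)=Q(z,B_1)+Q(z,B_2)$ already makes $B\mapsto \tau_B$ finitely additive. For $\sigma$-additivity, fix a bounded $B_0$ and $B_n\uparrow B\subseteq B_0$ in $\cB_b([0,\infty))$. The Herglotz-type bound
\[
|Q(z,B)| \le |c_B| + \tfrac{1+|z|}{1-|z|}\,\tau_B(\T) \le \bigl(1+\tfrac{1+|z|}{1-|z|}\bigr)\,|Q(0,\cdot)|(B_0)
\]
shows that $\{Q(\cdot,B_n)\}$ is locally uniformly bounded on $\D$, so Vitali's theorem together with Cauchy's formula gives $\partial_z^k Q(0,B_n) \to \partial_z^k Q(0,B)$ for every $k$; equivalently, all trigonometric moments of $\tau_{B_n}$ converge to those of $\tau_B$, so $\tau_{B_n}\to\tau_B$ weakly. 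Since $\tau_{B\setminus B_n}$ is in addition non-negative with total mass $\Re Q(0, B\setminus B_n)\to 0$, one gets $\tau_{B_n}(A)\to\tau_B(A)$ for every Borel $A\subseteq \T$. Hence $B\mapsto \tau_B(A)$ is a locally finite non-negative measure on $[0,\infty)$ for every $A\in\cB(\T)$. Putting $\sigma_t(A) := \tau_{[0,t]}(A)$ yields a non-decreasing family of finite non-negative measures with $\sigma_0=0$, and the standing hypothesis $Q(z,\{t\})=0$ forces $\tau_{\{t\}}=0$ via uniqueness of the Herglotz decomposition, so $t\mapsto \sigma_t(A)$ is continuous for every $A$. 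Thus $(\sigma_t)_{t\ge 0}$ satisfies \ref{LK2}, and a Carath\'eodory extension on the semiring of rectangles $\{A\times(s,t]:A\in\cB(\T),\,0\le s<t\}$ produces a unique locally finite non-negative measure $\Pi$ on $\T\times[0,\infty)$ with $\Pi(A\times(s,t]) = \sigma_t(A)-\sigma_s(A) = \tau_{(s,t]}(A)$ and $\Pi(\T\times\{t\})=0$; this is precisely the construction forward-referenced in the definition \eqref{eq:sigma}.

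Finally, \eqref{G-measure} is checked on the semiring of half-open time intervals $B=(s,t]$ by construction of $\sigma_t$ and the Herglotz representation, and extended to all $B\in\cB_b([0,\infty))$ by a monotone class argument, both sides being $\sigma$-additive and locally finite in $B$. Uniqueness of $(\Phi,\Pi)$ is read off by taking imaginary parts at $z=0$ for $\Phi$ and Taylor coefficients at $z=0$ for $\sigma_t$. The converse is direct: given such $(\Phi,\Pi)$, \eqref{G-measure} defines a function holomorphic in $z\in\D$, with non-negative real part, $\sigma$-additive in $B$ by dominated convergence, and vanishing on singletons $\{t\}$ by $\Pi(\T\times\{t\})=0$. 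The main obstacle is the middle step: the slice-wise Herglotz data $(A,B)\mapsto \tau_B(A)$ form only a bimeasure in $(A,B)$, and bimeasures need not extend to product measures in general. What rescues us is the positivity of $\tau_B$ combined with the continuous time parameter, which lets us encode the whole object in the non-decreasing continuous family $(\sigma_t)_{t\ge 0}$ and extend by Carath\'eodory.
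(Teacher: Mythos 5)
Your route is structurally the same as the paper's: apply the Herglotz representation slice-wise in $B$, check separate $\sigma$-additivity, and then assemble the slices into a measure $\Pi$ on $\T\times[0,\infty)$. The parts you carry out are correct: $\Phi(B)=\Im[Q(0,B)]$ is the imaginary part of the locally finite complex measure $Q(0,\cdot)$; finite additivity of $B\mapsto\tau_B$ follows from uniqueness of the Herglotz representation; and your observation that $\tau_{B\setminus B_n}(\T)=\Re[Q(0,B\setminus B_n)]\to0$ already gives setwise convergence $\tau_{B_n}(A)\to\tau_B(A)$ makes the Vitali/moment detour redundant (a small simplification over the paper, which gets $\sigma$-additivity in $B$ by comparing the two Herglotz representations of $Q(z,\cup_n B_n)$). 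Uniqueness and the converse are routine, as in the paper.

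The gap is the middle step, which you name but do not close. To invoke a Carath\'eodory extension on the semiring $\{A\times(s,t]\}$ you must first prove that the rectangle set function $A\times(s,t]\mapsto\tau_{(s,t]}(A)$ is a premeasure, i.e.\ countably additive (equivalently, finitely additive and countably subadditive) on that semiring; this is exactly the point where general bimeasures fail, as you acknowledge, and it is where the paper invests its effort. Your proposed rescue --- positivity plus continuity of the time parameter --- is also misdiagnosed: atomlessness of $\Pi(\T\times\cdot)$ contributes only the word ``continuous'' in the statement and plays no role in the extension; what makes the extension work is positivity together with separate $\sigma$-additivity and the regularity of finite Borel measures on the two Polish factors. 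Concretely, one still has to run a compactness/regularity argument (shrink $A$ to a compact set, fatten the covering rectangles using outer regularity in the $\T$-variable and right-continuity of $t\mapsto\Re[Q(0,[0,t])]$, extract a finite subcover and use finite additivity), which is what the paper does following \cite[Theorems 11.3 and 12.5]{Bil2012}; alternatively, as in the paper's second argument, transfer to $[0,2\pi)\times[0,T]$ and verify the hypotheses of the two-dimensional distribution function theorem (continuity from above and nonnegative rectangle increments), or appeal explicitly to a disintegration/positive-bimeasure extension theorem. Without one of these, the construction of $\Pi$ in \eqref{G-measure} --- the substantive content of the proposition --- is asserted rather than proved.
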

\begin{proof}  
The uniqueness of $\Phi$ is a simple consequence of the fact $\Phi(B)= \Im[Q(0,B)]$. For the uniqueness of $\Pi$, for each $B$ the inversion formula (see \cite[equation (8) in Theorem IV.14, p.145]{Tsuji:1975} or \cite[Lemma 2.8]{FHS18}) 
\[
\Pi(A\times B) = \int_A \Re[Q(rw^{-1},B)]\, dw   
\]
holds, where $dw$ is the normalized Haar measure on $\T$ and $A$ is any arc whose endpoints are continuity points of $\Pi(\cdot \times B)$. This determines the values of $\Pi$ at bounded Borel subsets of the product form and hence at every bounded Borel subset of $\T\times [0,\infty)$, e.g.\ by Dynkin's $\pi$-$\lambda$ theorem. 

For the existence, it suffices to fix $T > 0$ and focus on the interval $[0,T]$; once we have obtained a finite signed measure $\Phi_T$ on $\cB([0,T])$ and a finite non-negative measure $\Pi_T$ on $\cB(\T\times [0,T])$ satisfying \eqref{G-measure} for all $B \in \cB([0, T])$ for each $T > 0$, we can prolong them by setting $\Phi(B) := \lim_{T\to\infty} \Phi_T(B)$ for $B\in \cB_{b}([0,\infty))$ and $\Pi(C) := \lim_{T\to\infty} \Pi_T(C)$ for all $C \in \cB(\T\times [0,\infty))$, which make sense thanks to the uniqueness established above. 

Now we fix $T>0$ and seek for a finite signed measure $\Phi$ on $\cB([0,T])$ and a finite non-negative measure $\Pi$ on $\cB(\T\times [0,T])$ satisfying \eqref{G-measure} for all $B \in \cB([0, T])$. By the Herglotz representation (see \cite[Theorem 2.4]{Pom75} or \cite[Lemma 2.8]{FHS18}), for every $B \in \cB([0,T])$ there exists $\Phi(B) \in \R$ and a non-negative finite measure $\Pi_0(\cdot, B)$ such that 
\begin{equation}
Q(z,B) = i \Phi(B) + \int_\T \frac{1+\xi z}{1-\xi z} \,\Pi_0(d\xi, B),   \qquad z \in \D 
\end{equation}
holds. Since $\Phi(B) = \Im[Q(0,B)]$, $\Phi$ is a signed measure on $[0,T]$. Note that since $\Pi_0(\T,B)=\Re[Q(0,B)]$ it follows that $\Pi_0(\T,\cdot)$ is a finite measure on $[0,T]$ and $\Pi_0(\T, \{t\})=0$ for every $t\ge0$. We will show that $\Pi_0$ extends to a measure on the product space $\T \times [0,T]$. Let $\tilde Q(z,\cdot) = Q(z,\cdot) - i\Phi(\cdot)$.

\vspace{2mm}\noindent
{\bf Step 1:} we prove that $\Pi_0(\cdot,\cdot)$ is a $\sigma$-additive measure on the second component as well.  
 Fix disjoint Borel subsets $B_1, B_2, \dots$ of $[0,T]$ and define 
$$
\Pi_1(A) := \Pi_0(A, \cup_{n\ge1} B_n) \quad \text{and}\quad \Pi_2(A) = \sum_{n\ge1} \Pi_0(A, B_n),  \qquad A \in \cB(\T), 
$$
which are finite measures on $\T$ with the common total mass $\Re[Q(0, \cup_{n\ge1} B_n)]$. 
We have on one hand 
\begin{align}
\tilde Q(z,\cup_{n\ge1} B_n) 
&= \int_{\T} \frac{1+ \xi z}{1-\xi z} \Pi_0(d\xi, \cup_{n\ge1}B_n) = \int_{\T} \frac{1+ \xi z}{1-\xi z} \Pi_1(d\xi)
\end{align}
and on the other hand, by Lemma \ref{lem:limits} \ref{item:limits2}
\begin{align}
\sum_{n\ge1} \tilde Q(z,B_n) 
&=   \sum_{n\ge1}  \int_{\T} \frac{1+ \xi z}{1-\xi z} \Pi_0(d\xi, B_n) =    \int_{\T} \frac{1+ \xi z}{1-\xi z} \Pi_2(d\xi) 
\end{align}
because $\xi\mapsto(1+\xi z)/(1-\xi z)$ is bounded and so $\Pi_2$-integrable. 
Since $\tilde Q(z,\cup_{n\ge1} B_n) = \sum_{n\ge1} \tilde Q(z,B_n)$, and by the uniqueness of the Herglotz representation, we have $\Pi_1 = \Pi_2$. Therefore, $\Pi_0(\cdot,\cdot)$ is a $\sigma$-additive measure on each component. 

\vspace{2mm}\noindent
{\bf Step 2:} we extend $\Pi_0$ to a finite non-negative measure $\Pi$ on $\T \times [0,T]$. 
Consider the semiring $\mathcal{S}=\{A \times [s,t): A \in \cA, 0 \le s < t \le T \}$, where $\cA$ is the set of arcs of the form $\{e^{i \theta}:\theta \in [\alpha,\beta)\}$ with $0\leq \beta- \alpha \leq \pi$. Define $\Pi \colon \mathcal{S} \to [0,\infty]$ by 
$
\Pi(A \times [s,t)) = \Pi_0(A,[s,t)). 
$ 
We can show that $\Pi$ is finitely additive and countably sub-additive on $\mathcal{S}$ by arguments similar to the proof of \cite[Theorem 12.5]{Bil2012}, and hence by \cite[Theorem 11.3]{Bil2012} it extends to a $\sigma$-finite $\sigma$-additive measure on the $\sigma$-field generated by $\mathcal{S}$, which is $\cB(\T \times [0,T])$. 

As an alternative proof, we can more directly resort to the characterization of two-dimensional distribution functions \cite[Theorem 12.5]{Bil2012}. To begin, we  identify $\T$ with $[0,2\pi)$ by the obvious bijection $\varphi\colon \{e^{i\theta}: \theta \in [0,2\pi)\} \to [0,2\pi)$ which is Borel measurable as well as its inverse. Define $\Xi_0\colon \cB([0,2\pi)) \times \cB([0,T])\to [0,\infty)$ by $\Xi_0(A,B) = \Pi_0(\varphi^{-1}(A),B)$ and the function $F(\theta,t)= \Xi_0((-\infty,\theta]\cap[0,2\pi) ,  (-\infty,t]\cap[0,T])$ on $\R^2$. The function $F$ satisfies the assumption of \cite[Theorem 12.5]{Bil2012}: it is continuous from the above thanks to the $\sigma$-additivity on each component and 
\begin{equation}\label{eqF}
F(\theta_2,t_2)- F(\theta_2,t_1) - F(\theta_1, t_2)+ F(\theta_1,t_1) \ge0
\end{equation}
 whenever $\theta_1 < \theta_2$ and $t_1< t_2$. In fact the LHS of \eqref{eqF} is exactly $\Xi_0((\theta_1,\theta_2]\cap[0,2\pi), (t_1,t_2]\cap[0,T])$ thanks to the $\sigma$-additivity for each variable of $\Xi_0$. 
Therefore, there exists a finite non-negative measure $\Xi$ on $\R^2$ such that $\Xi((-\infty,\theta] \times (-\infty, t]) = F(\theta,t)$. For $\theta \in [0,2\pi)$ and $t \in [0,T]$ we have $\Xi([0,\theta] \times [0, t])  = \Xi_0([0,\theta], [0, t]) $ as desired. 
\end{proof}

Now we are in a position to prove that a multiplicative Loewner chain with positive derivatives at $z=0$ satisfies an integral equation, called the \emph{Loewner integral equation} in this paper. 

\begin{theorem}\label{thm:LIE}
Let $(g_{t})_{t\ge 0}$ be a multiplicative Loewner chain such that $g_t'(0)>0$ for all $t\ge0$. 
Then there exists a unique continuous H-family $\{Q(z,\cdot)\}_{z \in \D}$ such that 
\begin{equation}\label{eq:LIE}
g_t(z) = z -z \int_0^t \frac{\partial g_s}{\partial z}(z) Q(z,ds),\qquad t \ge0, z \in \D. 
\end{equation}
Moreover, it holds that $\Im[Q(0, \cdot)]=0$, meaning that $\Phi$ in \eqref{G-measure} is zero. 
\end{theorem}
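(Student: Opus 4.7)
My plan is to construct $\{Q(z,\cdot)\}_{z\in\D}$ by viewing the Loewner chain, for each fixed $z\in\D$, as a complex Borel measure on $[0,\infty)$ and dividing by a natural integrating factor. First I would set $a(t):=g_t'(0)$ and $m(t):=-\log a(t)$; the factorization $g_t=g_s\circ g_{s,t}$ with $g_{s,t}(0)=0$ (forced by univalence of $g_s$) together with the Schwarz lemma shows that $a$ is continuous, strictly positive, nonincreasing with $a(0)=1$, so $m$ is continuous nondecreasing with $m(0)=0$ and yields a locally finite atomless non-negative Stieltjes measure $\mu_m$ on $[0,\infty)$. The central quantitative estimate I would prove is
\[
|g_t(z)-g_s(z)|\le C_r(m(t)-m(s)),\qquad |z|\le r<1,\ 0\le s\le t,
\]
by writing $g_t(z)-g_s(z)=g_s(g_{s,t}(z))-g_s(z)$, bounding $|g_s'|$ on $\{|w|\le r\}$ via Schwarz--Pick, noting that $F_{s,t}(z):=1-g_{s,t}(z)/z$ is Herglotz on $\D$ with real value $F_{s,t}(0)=1-a(t)/a(s)$ (so $|F_{s,t}(z)|\le\tfrac{1+|z|}{1-|z|}F_{s,t}(0)$), and using $1-e^{-(m(t)-m(s))}\le m(t)-m(s)$. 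Consequently $t\mapsto g_t(z)$ is continuous and of bounded variation on bounded intervals, and determines a locally finite atomless complex Borel measure $\nu_z$ on $[0,\infty)$ with $\nu_z((s,t])=g_t(z)-g_s(z)$ and $|\nu_z|((s,t])\le C_r(m(t)-m(s))$.

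Next I would define $Q(z,B):=-\int_B \nu_z(du)/(z\,g_u'(z))$ for $z\in\D\setminus\{0\}$; the integrand is well-defined because univalence of $g_u$ prevents $g_u'(z)$ from vanishing, and \eqref{eq:LIE} follows immediately from $-zg_s'(z)Q(z,ds)=\nu_z(ds)$. To verify the H-family conditions I would approximate $Q(z,(s,t])$ by the Riemann--Stieltjes sums
\[
S_{\mathcal P}(z)=-\sum_k\frac{g_{t_k}(z)-g_{t_{k-1}}(z)}{z\,g_{t_{k-1}}'(z)}
\]
over partitions $\mathcal P=\{s=t_0<\dots<t_n=t\}$. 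Using $g_{t_k}=g_{t_{k-1}}\circ g_{t_{k-1},t_k}$ and a second-order Taylor expansion of $g_{t_{k-1}}$ at $z$, each summand equals $-F_{t_{k-1},t_k}(z)+E_k'$, where the remainder satisfies $|E_k'|\le C_r'(m(t_k)-m(t_{k-1}))^2$ uniformly on $|z|\le r$ (Cauchy's estimate on $g_{t_{k-1}}''$ together with a Koebe-distortion lower bound on $|g_{t_{k-1}}'|$). Hence $S_{\mathcal P}(z)=\sum_k F_{t_{k-1},t_k}(z)-\sum_k E_k'$, in which the first sum has non-negative real part and the second is bounded by $C_r'(m(t)-m(s))\max_k(m(t_k)-m(t_{k-1}))\to 0$ as the mesh shrinks, by uniform continuity of $m$. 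This simultaneously yields $\Re Q(z,(s,t])\ge 0$; the holomorphy of $Q(\cdot,(s,t])$ on $\D$ as the compact-open limit of the holomorphic $S_{\mathcal P}$ (Vitali, using the uniform bound $|S_{\mathcal P}(z)|\le\tilde C_r(m(t)-m(s))$); and $Q(z,\{t\})=0$ from atomlessness of $\nu_z$. Non-negativity of $\Re Q(z,\cdot)$ on all Borel sets follows from the Hahn decomposition (a signed measure non-negative on every interval is non-negative), and holomorphy of $Q(\cdot,B)$ extends from half-open intervals to all $B\in\cB_b([0,\infty))$ by $\mu_m$-approximation, using the locally uniform domination $|Q(z,\cdot)|\le\tilde C_r\mu_m$ on $|z|\le r$.

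For the final claim $\Im Q(0,\cdot)=0$, letting $z\to 0$ in $S_{\mathcal P}$ gives $Q(0,(s,t])=\lim\bigl(-\sum_k(a(t_k)-a(t_{k-1}))/a(t_{k-1})\bigr)=\int_s^t(-da(u)/a(u))=m(t)-m(s)$, so $Q(0,\cdot)=\mu_m$ is real non-negative and $\Phi\equiv 0$ in \eqref{G-measure}. Uniqueness of $Q$ is a direct consequence of the integral equation: for each $z\in\D\setminus\{0\}$ it determines the complex measure $g_s'(z)Q(z,ds)$ by subtraction, the continuous non-vanishing factor $g_s'(z)$ can be divided out, and holomorphy in $z$ (condition \ref{HF1}) then extends equality to $z=0$. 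The main technical obstacle I anticipate is establishing $\Re Q(z,B)\ge 0$: everything hinges on the Taylor-remainder analysis in the Riemann-sum argument, whose success rests crucially on the fact that $\sum_k(m(t_k)-m(t_{k-1}))^2$ vanishes in the mesh-zero limit purely by continuity (not absolute continuity) of $m$.
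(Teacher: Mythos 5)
Your proposal is correct, but it takes a genuinely different route from the paper. The paper reparametrizes time: writing $g_t'(0)=e^{-\beta(t)}$ and $\tau(u)=\sup\{t:\beta(t)=u\}$, the chain $g_{\tau(u)}$ becomes a classical, absolutely continuous radial Loewner chain, so the known Herglotz vector field representation \cite[Proposition 3.15]{FHS18} yields $g_{\tau(u)}(z)=z-z\int_0^u \partial_z g_{\tau(v)}(z)\,r(z,v)\,dv$; the H-family is then defined by $Q(z,B)=\int_{\tau^{-1}(B)}r(z,v)\,dv$ and extended across the plateaus of $\beta$ via the Schwarz lemma, and uniqueness is proved by polar-decomposing $L=-z(Q-Q^*)$ and splitting $[0,T]$ according to the signs of $\Re$ and $\Im$ of $\partial_z g_s(z)h(s)$. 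You instead construct $Q$ directly: the variation bound $|g_t(z)-g_s(z)|\le C_r\,(m(t)-m(s))$ (essentially the paper's Theorem \ref{thm:estimate_LC}, proved in the appendix by the same Schwarz-plus-Herglotz device you use) makes $t\mapsto g_t(z)$ a continuous BV function dominated by $dm$, you divide the resulting complex measure by $-z g_u'(z)$, and you recover holomorphy, $\Re Q\ge0$, atomlessness and $Q(0,\cdot)=\mu_m$ through Riemann--Stieltjes sums with a second-order Taylor remainder that dies in the mesh-zero limit because $\sum_k(\Delta_k m)^2\to0$ by mere continuity of $m$; uniqueness follows by dividing out the non-vanishing factor $g_s'(z)$ and using holomorphy at $z=0$. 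Your argument is more self-contained (it bypasses the absolutely continuous Loewner theory and the time change altogether), at the price of more hands-on estimates; the paper's version is shorter and its reparametrization $\tau$ is reused in the converse result, Theorem \ref{thm:solution_to_LIE}. Two small blemishes to repair in your write-up: the summand of $S_{\mathcal P}$ is $+F_{t_{k-1},t_k}(z)+E_k'$ (your stated sign contradicts the following line, though the conclusion you draw is the correct one), and your constants $C_r'$, $\tilde C_r$ implicitly depend on the time horizon $T$ as well as on $r$, since the Koebe lower bound on $|g_u'|$ carries the factor $g_u'(0)\ge e^{-m(T)}$ --- harmless because everything is done on bounded intervals, but it should be said.
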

\begin{remark} 
If we drop the assumption $g_t'(0)>0$ then such an H-family that \eqref{eq:LIE} holds may not exist; see Section \ref{sec:gen}. 
\end{remark}
\begin{proof} 

It suffices to prove the statements on a fixed interval $[0,T]$. As in the proof of \cite[Theorem 3.16]{FHS18},  $g_t'(0)$ is of the form $e^{-\beta(t)}$, where $\beta(t)$ is a non-decreasing non-negative continuous function of $t$. We define $U=\beta(T)$ and 
\begin{equation}
\label{reparametrization}
\tau(u) = \sup \{ t \ge 0: \beta(t)=u \}, \qquad u \in [0,U]
\end{equation}
which is a strictly increasing function. Then $g_{\tau(u)}(z) = e^{-u}z + \cdots$ is a decreasing radial Loewner chain in the classical sense, in particular an absolutely continuous function regarding $u$.  According to \cite[Proposition 3.15]{FHS18}, there exists a multiplicative Herglotz vector field of the form $-z r (z,t)$ where $r$ is a function such that $\Re[r]\ge0$ and $r(0,t) =1$ for a.e.\ $t\ge0$, 
 such that 
$$
g_{\tau(u)}(z) = z -z \int_0^u \frac{\partial g_{\tau(v)}}{\partial z}(z) r(z,v) \,dv. 
$$
 Then we can define 
 \begin{equation}\label{eq:Q}
  Q(z,B)= \int_{\tau^{-1}(B)} r(z, v)\,dv 
\end{equation}
 for Borel subsets $B \subset[0,T]$. It satisfies $\Im[Q(0,\cdot)]=0$ and $Q(z,\{t\})=0$ for all $z \in \D$ and $t\ge0$. 
  By change of variables we have 
\begin{equation}\label{eq:change_of_variables}
\int_0^{T} k(t) Q(z,dt) =   \int_0^U k(\tau(v)) r(z,v) \,dv 
\end{equation}
for all bounded measurable functions $k$. By taking $k(t)= \partial_z g_t(z) 1_{[0,\tau(u)]}(t)$, we have
\begin{equation}\label{eq:integral}
g_{\tau(u)}(z) = z -z \int_0^{\tau(u)} \frac{\partial g_{t}}{\partial z} Q(z,dt), \qquad u \in [0,U]. 
\end{equation}

Here note that $[0,T]\setminus \tau([0,U])$ is a union of disjoint intervals of the form $[a,b)$ on which $\beta(t)$ takes constant values. On each such interval, the function $t \mapsto g_t(z)$ is constant by the Schwarz lemma. Also, $Q(z, \cdot)$ is the zero measure on $[0,T]\setminus \tau([0,U])$. 
Combining these facts together implies that equation \eqref{eq:integral} extends to 
$$
g_t(z) = z -z \int_0^t \frac{\partial g_s}{\partial z} Q(z,ds), \qquad t\in [0,T]. 
$$

It remains to show the uniqueness of $Q$.
Suppose that there exists another continuous H-family $Q^{*}$ such that $(g_t)_{t\ge0}$ satisfies \eqref{eq:LIE} with $Q^{*}$. Setting $L= -z(Q-Q^*)$ one obtains 
\begin{equation}
\label{uniqueness01}
\int_B  \frac{\partial g_s}{\partial z}(z) L(z, ds) =0
\end{equation}
for all $z \in \D$ and $B \in \cB([0,T])$. 
We fix $z\in \D$ and denote $L(z,dt)$ simply by $L(dt)$. According to \cite[Theorem 6.12]{Ru87} there exists a measurable function $h\colon [0,T] \to \C$ such that $|h|=1$ and $L(dt) = h(t)|L|(dt)$. The equation \eqref{uniqueness01} reads 
\begin{equation}\label{eq:integral_zero}
\int_B  \frac{\partial g_s}{\partial z}(z)h(s) |L|(ds) =0, \qquad B \in \cB([0,T]). 
\end{equation}
Since $g_s$ is univalent, the function $\ell(s):= \frac{\partial g_s}{\partial z}(z)h(s)$ does not vanish at any $s \in [0,T]$. This implies that $[0,T]=\{\Re[\ell(s)]>0\} \cup \{\Re[\ell(s)]<0\} \cup \{\Im[\ell(s)]>0\}\cup \{\Im[\ell(s)]<0\}$. 
Taking the real part of \eqref{eq:integral_zero} and choosing $B =\{\Re[\ell(s)]>0\}$ yields 
\[
\int_{\{\Re[\ell(s)]>0\}} \Re[\ell(s)] |L|(ds) =0, 
\]
which entails $|L|=0$ on the set $\{\Re[\ell(s)]>0\}$. Similar arguments apply to the other three sets and show that $|L|=0$ on $[0,T]$. 
\end{proof}

We establish a converse statement saying that the Loewner integral equation has a unique solution. 

\begin{theorem} \label{thm:solution_to_LIE}
Let $\{Q(z,\cdot)\}_{z \in \mathbb D}$ be a continuous H-family on $[0,\infty)$ with $\Im[Q(0,\cdot)]=0$. Then there exists a unique multiplicative Loewner chain $(g_{t})_{t\ge0}$ on $\D$ such that $g_t'(0)>0$ for all $t\ge0$ and \eqref{eq:LIE} is satisfied. 
\end{theorem}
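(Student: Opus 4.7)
My plan is to reduce the Loewner integral equation \eqref{eq:LIE} to the classical (absolutely continuous) Loewner differential equation via a time reparametrization driven by the scalar function $\beta(t) := Q(0,[0,t])$. Because $\Im[Q(0,\cdot)]=0$, condition \ref{HF2} makes $\beta$ non-negative and non-decreasing, continuity of the H-family forces $\beta$ continuous with $\beta(0)=0$. Set $U := \lim_{t\to\infty}\beta(t) \in [0,\infty]$ and let $\tau\colon [0,U)\to [0,\infty)$ be the pseudoinverse $\tau(u):=\sup\{t\ge 0:\beta(t)\le u\}$. The crucial observation is that $Q(z,\cdot)\ll Q(0,\cdot)$ for every $z\in\D$: by Proposition \ref{bi-measure} (with $\Phi\equiv 0$) both measures are images of the single measure $\Pi$ in \eqref{G-measure}, and $Q(0,B)=\Pi(\T\times B)$. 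Disintegrating $\Pi$ along the second coordinate (valid since $\T$ is a compact Polish space) yields a Borel measurable family of probability measures $(\lambda_s)_{s\ge 0}$ on $\T$ with $\Pi(d\xi\,ds)=\lambda_s(d\xi)\,\beta(ds)$, so that
\[
Q(z,ds)=r(z,s)\,\beta(ds),\qquad r(z,s):=\int_\T \frac{1+\xi z}{1-\xi z}\,\lambda_s(d\xi),
\]
where $r(\cdot,s)$ is a Herglotz function on $\D$ with $r(0,s)=1$ and $r(z,\cdot)$ is Borel measurable.

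Setting $\tilde r(z,u):=r(z,\tau(u))$ on $\D\times[0,U)$ gives a multiplicative Herglotz vector field in the classical sense of \cite{BCD2012, CDMG14}. The standard existence theorem (equivalently \cite[Proposition 3.15]{FHS18}) then furnishes a unique reverse decreasing radial Loewner chain $(\tilde g_u)_{u\in[0,U)}$ on $\D$ with $\tilde g_u'(0)=e^{-u}$ solving
\[
\tilde g_u(z)=z - z\int_0^u \tilde g_v'(z)\,\tilde r(z,v)\,dv.
\]
Define $g_t := \tilde g_{\beta(t)}$. Then $g_t'(0)=e^{-\beta(t)}>0$, and axioms \ref{TM1}--\ref{TM3} follow from the corresponding properties of $(\tilde g_u)$ together with continuity and monotonicity of $\beta$; on any maximal interval where $\beta$ is constant, $g_t$ is constant as well, consistently with $Q(z,\cdot)$ vanishing there by absolute continuity. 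Performing the change of variables $v=\beta(s)$ in the displayed equation at $u=\beta(t)$, and using $\tilde r(z,\beta(s))\,\beta(ds)=r(z,s)\,\beta(ds)=Q(z,ds)$, yields \eqref{eq:LIE}.

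For uniqueness, suppose $(g_t^*)$ is another multiplicative Loewner chain satisfying \eqref{eq:LIE} with $(g_t^*)'(0)>0$. Differentiating \eqref{eq:LIE} in $z$ at $z=0$ gives the scalar Volterra equation $(g_t^*)'(0) = 1 - \int_0^t (g_s^*)'(0)\,\beta(ds)$ whose unique solution is $e^{-\beta(t)}$, so $(g_t^*)'(0)=g_t'(0)$. The absolute continuity $Q(z,\cdot)\ll\beta$ together with \eqref{eq:LIE} forces $g_t^*$ to be constant on every flat interval of $\beta$, so $g_t^*$ descends through $u=\beta(t)$ to a function $\tilde g_u^*$ satisfying the same classical integral equation with the same vector field $-z\tilde r(z,u)$. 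Classical uniqueness then forces $\tilde g_u^*=\tilde g_u$, whence $g_t^*=g_t$. I expect the main technical obstacle to be the disintegration step---arranging the densities $\lambda_s$ to be jointly measurable so that $\tilde r$ is a genuine Herglotz vector field---together with the bookkeeping over flat intervals of $\beta$ to show that the time-change and its inverse preserve \eqref{eq:LIE} exactly.
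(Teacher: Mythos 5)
Your proposal is correct, and its skeleton matches the paper's proof: reparametrize time by $\beta(t)=Q(0,[0,t])=\Pi(\T\times[0,t])$ to reduce to an absolutely continuous radial Loewner equation, invoke \cite[Proposition 3.15]{FHS18} for existence of the classical chain, transfer back across the flat intervals of $\beta$ (your $g_t=\tilde g_{\beta(t)}$ is exactly the paper's $g(z,t)=h(z,\tau^{-1}(t'))$), and prove uniqueness by reverting the time change and citing classical uniqueness. The genuine difference is how the Herglotz vector field is produced. The paper time-changes first, sets $R(z,B)=Q(z,\tau(B))$, shows $|R(z,B)|\le \tfrac{2}{1-|z|}\,m(B)$, and extracts the density $r(z,t)$ by Lebesgue differentiation at the countable family $z=1/k$ combined with Vitali's theorem; you instead disintegrate $\Pi$ in the original time variable via regular conditional probabilities, obtaining $Q(z,ds)=r(z,s)\,\beta(ds)$ with $r(\cdot,s)$ automatically Herglotz and $r(0,s)=1$, and only then compose with $\tau$. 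Both routes work: yours gets the Herglotz property and the normalization for free (and it is precisely the decomposition the paper itself records in the remark connecting Theorems \ref{thm:LIE} and \ref{thm:solution_to_LIE} to Yanagihara's results), at the price of the disintegration theorem and of checking that composing with $\tau$ is compatible with the $d\beta$-a.e.\ ambiguity of the kernel; this is fine because the pushforward of Lebesgue measure under $\tau$ is $d\beta$ and $\tau(\beta(s))=s$ off the $d\beta$-null union of flat intervals, a point worth making explicit. Your additional step in the uniqueness argument, identifying $(g_t^*)'(0)=e^{-\beta(t)}$ through the scalar Volterra equation and Lemma \ref{lem:diff_eq}, is absent from the paper's Step 3 but mirrors the paper's own use of Lemma \ref{lem:diff_eq} in the proof of Theorem \ref{thm:conv_monotone}; it is harmless and tightens the bookkeeping before applying the classical uniqueness theorem.
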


\begin{proof}
Again we may focus on each finite interval $[0,T]$. For each Borel subset $B\in \cB([0,T])$ the function $Q(z,B)$ is of the form \eqref{G-measure}. Because of the assumption $\Im[Q(0,\cdot)]=0$ we must have $\Phi=0$. 

\vspace{2mm}\noindent
{\bf Step 1:} reduction to Herglotz vector fields of order 1. The idea is similar to the proof of Theorem \ref{thm:LIE}. Let 
\begin{equation}\label{eq:time_change}
\tau(u):= \sup\{t \ge0: \Pi(\T \times [0,t]) = u\}, \qquad R(z, B):=Q(z,\tau(B)).  
\end{equation}
The function $\tau$ is strictly increasing and hence $\Pi(A \times\tau(\cdot))$ is a finite measure for every $A \in \mathcal B(\T)$, and satisfies $\Pi(\T\times\tau(\cdot))=m(\cdot)$, where $m$ is the Lebesgue measure. Combined with \eqref{G-measure} those observations lead to
\begin{equation}\label{eq:estimate}
|R(z,B)| \leq \frac{2}{d(z,\T)} m(B) \leq \frac{2}{1-|z|} m(B)
\end{equation}
for every $z \in \D$ and $B \in \cB([0,T])$, where $d(\cdot,\cdot)$ is the standard distance on $\C$. This estimate implies that $R(z, \cdot)$ is absolutely continuous with respect to the Lebesgue measure $m$ for each fixed $z \in \D$, and then $\{n R(z,[t,t+\frac1n])\}_{n=1}^\infty$ defines a sequence of holomorphic functions on $\D$ converging to a function pointwisely on $\D$ as $n \to \infty$ for almost all $t \ge 0$, where the null set depends on $z$. 
The inequality \eqref{eq:estimate} shows that $\{n R(z,[t,t+\frac1n])\}_{n=1}^\infty$ is locally uniformly bounded on $\D$, and hence forms a normal family (Montel's theorem).

Now fix $k \in \{j \in \N : j \ge 2\}$ and let $z=1/k$.
Then $n R(\frac1k,[t,t+\frac1n])$ converges to a limit function, say $r(\frac1k, t)$, for all $t \in [0,T]\setminus E_{k}$, where $E_{k}$ is a null set.
It then follows that a limit $r(\frac1k, t)$ exists for all $k \in  \{j \in \N : j \ge 2\}$ and all $t \in [0,T]\setminus E$, where
\[
E := \bigcup_{k\ge 2} E_{k}. 
\]
Applying Vitali's theorem (e.g.\ \cite[p.150]{Rem98}), we conclude that there exists a function $r(z,t)$ such that for every $t \in [0,T]\setminus E$ the sequence $\{n R(\cdot,[t,t+\frac1n])\}_{n\ge1}$ converges to $r(\cdot,t)$ locally uniformly on $\D$ as $n\to\infty$. In particular, $r(\cdot,t)$ is holomorphic for every $t \in [0,T]\setminus E$, $\Re\, r\ge0$,  the function $t\mapsto r(z,t)$ is integrable for every $z \in \D$, and 
$$
R(z, B) = \int_B r(z,t)\,dt, \qquad z \in \D, ~B \in \cB([0,T]).  
$$
Since $R(0,B) = m(B)$, we have $r(0,t)=1$ for a.e.\ $t$. By \cite[Proposition 3.15]{FHS18}, the Loewner differential equation 
\begin{equation}\label{eq:LDE2}
\partial_t h(z,t) = -z r(z,t) \partial_z h(z,t), \qquad h(z,0)=z
\end{equation}
has a solution $h$ such that $h_t'(0)=e^{-t}$. 

\vspace{2mm}\noindent
{\bf Step 2:} going back to the original integral equation. The above equation \eqref{eq:LDE2} reads 
\begin{equation*}
h(z,u)  = z -z \int_0^u r(z,v) \partial_z h(z, v) \,dv = z -z \int_0^u \partial_z h(z, v) R(z,dv). 
\end{equation*}
Define 
\begin{equation*}
\label{def-f(z,t)}
g(z,t):= h(z,\tau^{-1}(t')), 
\end{equation*}
where 
\begin{equation*}
t'= \inf \{s \ge t: s \in \tau([0,T]) \} =\min \{s \ge t: s \in \tau([0,T]) \}. 
\end{equation*}
This definition obviously implies $g(z, \tau(u)) = h(z,u)$ and hence by the change of variables 
\begin{align*}
g(z,t) 
&=  z -z \int_0^{\tau^{-1}(t')} \partial_z g(z, \tau(u)) R(z,du) =   z -z \int_0^{t'}  \partial_z g(z, s) Q(z,ds). 
\end{align*}
By the definition of $\tau$, we know that $\Pi(\T\times[t,t'])=0$, and hence 
\begin{equation*}\label{eq:int}
g(z,t)  =  z -z \int_0^t \partial_z g(z, r) Q(z,dr),  
\end{equation*}
which verifies the existence of a solution. By definition, we have $g_t'(0)>0$. 

\vspace{2mm}\noindent
{\bf Step 3:} uniqueness. Suppose that $(\tilde g_t)_{t\ge0}$ is a solution. Then both $g_{\tau(u)}$ and $\tilde g_{\tau(u)}$ satisfy the same integral equation 
\eqref{eq:integral}. By the definition of $\tau$, the measure $\Pi(\T\times \cdot)$ and hence $Q(z,\cdot)$ vanishes outside the range of $\tau$ (see also the paragraph after \eqref{eq:integral}). This enables us to revert the definition of $R$ in \eqref{eq:time_change} into $Q(z, B) = R(z,\tau^{-1}(B)), B \in \cB_b([0,\infty))$ or, equivalently, \eqref{eq:Q}.  By the change of variables \eqref{eq:change_of_variables} applied to \eqref{eq:integral}, we conclude that both $g_{\tau(u)}$ and $\tilde g_{\tau(u)}$ satisfy the same differential equation \eqref{eq:LDE2}. 
Hence $g_{\tau(u)}=\tilde g_{\tau(u)}$ for all $u \ge0$ (see \cite[Theorem 4.1 and Theorem 4.2]{CDMG14}). From the paragraph after \eqref{eq:integral}, the equality also holds outside the range or $\tau$, so that $g_t=\tilde g_t$ for all $t\ge0$. 
\end{proof}

\begin{remark}
Yanagihara essentially proved Theorems \ref{thm:LIE} and \ref{thm:solution_to_LIE} in a different form \cite[Theorems 3.4 and 4.4]{Yan}. The relationship is as follows.  Let $(g_t)_{t\ge0}$ be a multiplicative Loewner chain such that $g_t'(0)>0$ for all $t\ge0$, $\{Q(z,\cdot)\}_{z\in \D}$ be its H-family and $\Pi$ be the measure as described in Proposition \ref{bi-measure} (see also Definition \ref{def:generating}). By the disintegration theorem (or the existence of regular conditional probability \cite[Section 10.2]{Dud02}), $\Pi$ can be decomposed as 
\[
\Pi(d\xi \times dt) = \tau_t(d\xi)da(t),
\]
where $a(t) = \Pi(\T\times[0,t])$ and $\tau_t$ is a probability measure for all $t\ge0$ such that $t\mapsto \tau_t(A)$ is measurable for every $A \in \cB(\T)$. Then 
\[
Q(z,dt) = \left[\int_{\T} \frac{1+ \xi z}{1- \xi z} \tau_t(d\xi)\right] da(t) =: b(z,t)da(t).  
\]
The Loewner integral equation for $(g_t)_{t\ge0}$ then reads  
\begin{equation}\label{eq:Yan}
g_t(z) = z - z \int_0^\infty \partial_z g(z,s) Q(z,ds) = z - z \int_0^\infty \partial_z g(z,s) b(z,s)\,da(s), 
\end{equation}
so that $b(z,t)$ is basically the function $P(z,t)/a(t)$ in \cite[Theorems 3.4 and 4.4]{Yan}; note that the Loewner chains in \cite{Yan} are increasing and  hence the sign in the Loewner equation is different from \eqref{eq:Yan}.

\end{remark}

\subsection{Generating families of multiplicative Loewner chains} \label{sec:gen}

In this section we make the ideas in Section \ref{sec:monotone} rigorous: we extract a generating family $(\alpha_t,\sigma_t)_{t\ge0}$ satisfying \ref{LK1} and \ref{LK2} from a given continuous $\circlearrowright$-CH.

For each continuous $\circlearrowright$-CH $(\nu_{s,t})_{t\ge s \ge 0}$ define $f_t= \eta_{\nu_{0,t}}$ which form a multiplicative Loewner chain, namely a decreasing Loewner chain of analytic self-mappings of $\D$ such that $t\mapsto f_t(z)$ is continuous for every $z \in \D$, $f_0=\id$ and $f_t(0)=0$. 

\vspace{2mm}
\noindent
{\bf \underline{Idea.}}  To get an idea for how to extract a generator $(\alpha_t,\sigma_t)_{t\ge0}$ from $(f_t)_{t\ge0}$, suppose first that $(f_t)_{t\ge0}$ is a multiplicative Loewner chain which is smooth enough (say, of order 1 in the sense of \cite[Definition 1.6]{CDMG14}, or $C^1$ as in Section \ref{sec:monotone}). According to \cite[Proposition 3.13]{FHS18},  $(f_t)_{t\ge0}$ satisfies the differential equation 
\begin{equation}\label{eq:p}
\partial_t f(z,t) =- z p(z,t) \partial_z f (z,t), 
\end{equation}
 where $p$ is a function of the form \eqref{eq:MHVF}. The idea in Section \ref{sec:monotone} was to define \eqref{eq:gen} to be the generator of $(f_t)_{t\ge0}$, but we met a difficulty that the arguments there required the differentiability of the parameters $\alpha_t,\sigma_t$. 

In order to extract a generator, we take the following detour. For a multiplicative Loewner chain $(f_t)_{t\ge0}$ which is just continuous on $t$, write $f_t'(0)=|f_t'(0)| e^{i\alpha_t}$ with a continuous function $\alpha_t \in \R$ such that $\alpha_0=0$ and define a supplementary function $g(z,t)=g_t(z) = f_t(e^{-i\alpha_t}z)$. The point is that $(g_t)_{t\ge0}$ is also a multiplicative Loewner chain with additional property $g_t'(0)>0$, so that Theorem \ref{thm:LIE} is available; denote by $\{Q(z,\cdot)\}_{z \in \D}$  the continuous H-family associated with $(g_t)_{t\ge0}$. We will relate this $Q$ with $\sigma_t$. For this purpose, again assume the smoothness of Loewner chains $(f_t)_{t\ge0}$ and hence of $(g_t)_{t\ge0}$ (later the smoothness will be removed in Definition \ref{def:generating}). Then the Loewner differential equation 
\begin{equation}\label{eq:abs_cont1}
\partial_t g(z,t) =- z q(z,t) \partial_z g (z,t) 
\end{equation}
holds for a function $q$ such that $q(\cdot,t)$ is analytic, $\Re[q(z,t)]\ge0$, and also $q(0,t) \in\R$ (thanks to $g_t'(0)>0$ and $g_t'(0)=\exp \left[-\int_0^t q(0,s)\,ds\right]$), so that it has the Herglotz representation 
\begin{equation*}
q(z,t) = \int_{\T} \frac{1+\xi z}{1-\xi z} d\tau_t(\xi)
\end{equation*}
for some finite non-negative measure $\tau_t$ such that $t\mapsto \tau_t(\T)$ is locally integrable. Note that $Q(z,dt)=q(z,t)\,dt$, and so 
\begin{equation}\label{eq:abs_cont3}
Q(z,B) = \int_B q(z,t)\,dt = \int_\T \frac{1+\xi z}{1-\xi z} \, \Pi(d\xi \times B), \qquad B\in \cB_b([0,\infty)),
\end{equation}
where $\Pi$ is a locally finite non-negative measure on $ \T\times [0,\infty)$ defined by $\Pi(d\xi \times dt)=\tau_t(d\xi)dt$. 

On the other hand, the relationship between $p$ and $q$ is given by 
\[
p(z,t)=q(e^{i\alpha_t} z,t)- i\dot \alpha_t, 
\]
because 
\begin{align*}
\partial_t f (z,t) 
&= \partial_t  g (e^{i\alpha_t} z,t)+ i\dot\alpha_t e^{i\alpha_t} z \partial_z g(e^{i\alpha_t} z, t) \\
&= z (- e^{i\alpha_t}q(e^{i\alpha_t} z,t) + i\dot\alpha_te^{i\alpha_t})  \partial_z g(e^{i\alpha_t} z, t) \\
&=  -z (q(e^{i\alpha_t} z,t)- i\dot\alpha_t)  \partial_z f(z,t). 
\end{align*}
So the Herglotz function $p(\cdot,t)$ associated with $(f_{t})$ is 
\begin{equation*}
p(z,t)=q(e^{i\alpha_t} z,t)- i\dot \alpha_t = - i\dot \alpha_t +  \int_{\T} \frac{1+ e^{i\alpha_t}\xi z}{1- e^{i\alpha_t} \xi z} d\tau_t(\xi). 
\end{equation*}
and hence the continuous H-family for $(f_{t})_{t\ge0}$ is given by 
\begin{equation}\label{eq:abs_cont2}
P(z,B) := \int_B p(z,t) \, dt = -i \int_B \dot \alpha_t\, dt +  \int_{\T} \frac{1+\xi z}{1- \xi z} \Sigma(d\xi \times B), 
\end{equation}
where $\Sigma$ is the push-forward of $\Pi$ by the map $(\xi,t)\mapsto (e^{i\alpha_t}\xi,t)$ on $ \T\times [0,\infty)$. Finally, the measure $\sigma_t$ is defined by $\sigma_t(A) = \Sigma(A \times [0,t])$. 

\vspace{2mm}\noindent{\bf \underline{General case.}} For a general multiplicative Loewner chain $(f_{t})_{t\ge0}$ the function $\alpha_t$ is only continuous and it is difficult to give a reasonable definition of $\dot\alpha_t\, dt$ and hence of $P(z,dt)$. However, in view of the discussions above, $\alpha_t$ and $\sigma_t$ can still be defined as follows.  

\begin{definition}\label{def:generating}  Let $(f_{t})_{t\ge0}$ be a multiplicative Loewner chain. 
\begin{enumerate}[label=\rm(\roman*)]
\item Define a continuous function $t\mapsto \alpha_t$ such that $f_t'(0)=|f_t'(0)| e^{i\alpha_t}$ and $\alpha_0=0$; 

\item apply Theorem \ref{thm:LIE} to the multiplicative Loewner chain $g_t(z)=  f_t(e^{-i\alpha_t}z)$ and take the continuous H-family $\{Q(z,dt)\}_{z \in \D}$ for $(g_t)_{t\ge0}$ with the representation 
\begin{equation*}
Q(z,B) = \int_{\T} \frac{1+ \xi z}{1- \xi z} \Pi(d\xi \times B),  \qquad z \in \D,~B \in \cB_{b}([0,\infty));
\end{equation*}

\item define $\Sigma$ to be the push-forward of $\Pi$ by the mapping $(\xi,t)\mapsto (e^{i\alpha_t}\xi,t)$ and also $\sigma_t(\cdot)= \Sigma(\cdot,[0,t])$.  

\end{enumerate}
\end{definition}

Again, the point is that we cannot define an H-family for $(f_t)_{t\ge0}$ because of the high singularity of $\dot \alpha_t \,dt$, but we can still define a pair $(\alpha_t, \sigma_t)_{t\ge0}$.

This definition is compatible with the smooth case discussed earlier: the functions $p, q$ in \eqref{eq:p} and \eqref{eq:abs_cont1} are then related to $\Sigma,\Pi$ via \eqref{eq:abs_cont2} and \eqref{eq:abs_cont3}. 

Definition \ref{def:generating} gives a mapping from the set of the multiplicative Loewner chains to the set of the families $(\alpha_t,\sigma_t)_{t\ge0}$ satisfying \ref{LK1} and \ref{LK2}. This mapping is well defined by the unique existence of the H-family in Theorem \ref{thm:LIE}, it is surjective by the existence of the solution in Theorem \ref{thm:solution_to_LIE}, and injective by the uniqueness of the solution in Theorem \ref{thm:solution_to_LIE}. Furthermore it is homeomorphic with respect to suitable topologies, see Theorem \ref{thm:conv_monotone} below. 

In \cite{FHS18} a bijection is established between the set of multiplicative Loewner chains and the set of continuous $\circlearrowright$-CHs that arise from monotone probability. We call the associated $(\alpha_t,\sigma_t)_{t\ge0}$ the \emph{generating family} for the corresponding continuous $\circlearrowright$-CH. 

\begin{definition}\label{def:c-f} A bijection 
\[\Theta_M\colon\{\text{continuous $\circledast$-CHs on $\T$}\} \to \{\text{continuous $\circlearrowright$-CHs on $\T$}\}
\]
is defined by sending the continuous $\circledast$-CH characterized by a generating family to the continuous $\circlearrowright$-CH having the same generating family. 
\end{definition}

Note that a continuous $\circledast$-CH is time-homogeneous if and only if $(\alpha_t,\sigma_t)_{t\ge0}$ is of the form $(t\alpha, t\sigma)_{t\ge0}$ for some $\alpha \in \R$ and a finite non-negative measure $\sigma$ if and only if the corresponding multiplicative Loewner chain $(f_t)_{t\ge0}$ is a semigroup, namely $f_t \circ f_s = f_{t+s}$ for all $s,t\ge0$.

\begin{example} The normal distribution on the unit circle $N_\T(m,v)$ is defined to be the law of $e^{i Z}$ where $Z$ is distributed as $N(m,v)$. 
The $\circledast$-semigroup of normal distributions on the unit circle $(N_\T(0,t))_{t\ge0}$ is characterized by $(\alpha_t,\sigma_t)=(0,(t/2)\delta_1)$, and is mapped by $\Theta_M$ to the $\circlearrowright$-semigroup consisting of the marginal distributions of monotone unitary multiplicative Brownian motion introduced in \cite{Ham15}. 

\end{example}

\begin{example} 
Let $f\colon [0,\infty) \to \T$ be a measurable function and $\lambda\colon [0,\infty)\to [0,\infty)$ be a locally integrable function. The slit multiplicative Loewner chain governed by the Herglotz vector field 
\[
p(z,t) = \lambda(t)\frac{1+z f(t)}{1-z f(t)}
\]
associates a continuous $\circlearrowright$-CH whose generating family $(\alpha_t,\sigma_t)$ is given by $\alpha_t=0$ and $\sigma_t(B)= \int_0^t\lambda(s)1_{B}(f(s))\,ds$. 
\end{example}

\subsection{Convergence of Loewner chains}\label{sec:conv}

Motivated by probability theory, we defined a time-dependent generator in Section \ref{sec:gen}. Now we show that this definition is appropriate by demonstrating that convergence of Loewner chains is equivalent to that of generating families. 

\begin{lemma}\label{lem:diff_eq}
Let $\rho$ be an atomless, locally finite non-negative measure on $[0,\infty)$ and $a,b\in\R$. Then the integral equation 
$$
u(t) = a + b\int_0^t u(s) \,d\rho(s), \qquad t \ge0, 
$$
has a unique solution $u \in C([0,\infty))$ given by 
\[
u(t) = a\exp(b\rho([0,t])). 
\]
\end{lemma}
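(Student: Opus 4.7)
The proof splits naturally into verifying existence of the claimed closed-form solution and establishing uniqueness. Let $F(t) := \rho([0,t])$; because $\rho$ is atomless and locally finite, $F$ is a continuous, non-decreasing, real-valued function on $[0,\infty)$ with $F(0)=0$. Hence $u(t) := a \exp(bF(t))$ is continuous.

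For existence, I would verify directly that the proposed formula solves the equation. The case $b = 0$ is trivial. For $b \neq 0$, the task reduces to showing
\[
\int_0^t e^{bF(s)}\,dF(s) = \frac{e^{bF(t)} - 1}{b}.
\]
This is a standard change-of-variable formula for Riemann--Stieltjes integrals with respect to a continuous monotone integrator: if $g$ is continuous then $\int_0^t g(F(s))\,dF(s) = \int_{F(0)}^{F(t)} g(u)\,du$, and applying this with $g(u)=e^{bu}$ gives the claim. Substituting back, $a + b\int_0^t u(s)\,d\rho(s) = a + a(e^{bF(t)} - 1) = u(t)$.

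For uniqueness, suppose $u_1,u_2 \in C([0,\infty))$ both solve the equation and set $w := u_1 - u_2$. Then $w$ is continuous and
\[
w(t) = b\int_0^t w(s)\,d\rho(s), \qquad t \ge 0.
\]
Fix $T>0$ and let $M := \sup_{t \in [0,T]}|w(t)|$, which is finite by continuity. I claim that for every $n \ge 0$,
\[
|w(t)| \le M\,\frac{(|b|F(t))^n}{n!}, \qquad t \in [0,T].
\]
The base case $n=0$ is the definition of $M$. For the inductive step, plugging the bound for $n$ into $|w(t)| \le |b|\int_0^t |w(s)|\,d\rho(s)$ gives
\[
|w(t)| \le \frac{M|b|^{n+1}}{n!}\int_0^t F(s)^n\,dF(s) = \frac{M(|b|F(t))^{n+1}}{(n+1)!},
\]
again by the change-of-variable formula applied to $g(u)=u^n$. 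Letting $n\to\infty$ yields $w(t)=0$ on $[0,T]$, and since $T$ was arbitrary, $w\equiv 0$ on $[0,\infty)$.

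The only delicate point is that the change-of-variable identity $\int_0^t g(F(s))\,dF(s) = \int_0^{F(t)} g(u)\,du$ really does require $F$ to be continuous; this is precisely where the hypothesis that $\rho$ is atomless is used (and without it an atom of $\rho$ at a point where $u$ is discontinuous could spoil both the existence and uniqueness arguments). Everything else is routine Stieltjes calculus, so the main obstacle is simply making the change-of-variable step explicit; it can be quoted from, e.g., the standard treatment of Riemann--Stieltjes integration or proved by a short Riemann-sum approximation using the uniform continuity of $F$ on $[0,T]$.
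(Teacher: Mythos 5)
Your proof is correct, but it follows a different route from the paper's. The paper proves uniqueness by a Banach contraction argument: the map $W[u](t)=a+b\int_0^t u(s)\,d\rho(s)$ is a contraction on $C[0,T]$ once $T$ is chosen with $|b|\rho([0,T])<1$ (global uniqueness then follows by continuing this argument along $[0,\infty)$, which is left implicit), and it proves existence by Picard iteration, computing the differences explicitly as $u_n(t)-u_{n-1}(t)=ab^n\rho^{\otimes n}(\Delta_n(t))=a\bigl(b\rho([0,t])\bigr)^n/n!$, where atomlessness enters through the simplex identity $\rho^{\otimes n}(\Delta_n(t))=\rho([0,t])^n/n!$ (the permuted simplices cover $[0,t]^n$ up to $\rho^{\otimes n}$-null diagonals), so that the iterates converge locally uniformly to the exponential. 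You instead verify the closed form directly via the change-of-variable formula $\int_0^t g(F(s))\,dF(s)=\int_{F(0)}^{F(t)}g(u)\,du$ for the continuous non-decreasing integrator $F(t)=\rho([0,t])$, and you obtain uniqueness by the iterated Gronwall-type bound $|w(t)|\le M\bigl(|b|F(t)\bigr)^n/n!$, which rests on the same change-of-variable identity with $g(u)=u^n$. The two arguments hinge on the same underlying computation (iterated integrals of $dF$ produce $F^n/n!$, valid precisely because $\rho$ has no atoms), but your version gives global uniqueness on each $[0,T]$ in one stroke without the smallness condition and continuation step, whereas the paper's Picard scheme constructs the solution without invoking the Stieltjes change-of-variable formula; your delicate point (continuity of $F$) is exactly where the paper uses nullity of the diagonal, so both proofs use the atomless hypothesis in equivalent ways.
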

\begin{proof} Let $T>0$ and $W$ be a mapping on $C[0,T]$ defined by
\[
W[u](t)= a+ b\int_0^t u(s) \,d\rho(s). 
\]
Then 
\[
\|W[u] - W[v]\|_{C[0,T]}  \le |b|\rho([0,T])\|u-v\|_{C[0,T]}, 
\]
and hence $W$ is a contraction mapping if $T>0$ is chosen such that $|b|\rho([0,T])<1$. This implies (the local existence and) uniqueness of a solution. 
  
Let $u_n$ be functions defined by 
\begin{equation}\label{eq:rec}
u_0(t)=a, \qquad u_n(t) = a + b\int_0^t u_{n-1}(s)\,d\rho(s), \qquad n \in \N. 
\end{equation}
We obtain 
\begin{align*}
u_{n}(t) - u_{n-1}(t)
&= b\int_0^t [u_{n-1}(s_1) - u_{n-2}(s_1)] \,d\rho(s_1) \\ 
& \cdots \\
&=  b^{n-1}\int_0^t d\rho(s_1) \int_0^{s_1} d\rho(s_2) \cdots  \int_0^{s_{n-2}} [u_1(s_{n-1})- u_0(s_{n-1})] d\rho(s_{n-1}) \\
&= a b^n\rho^{\otimes n} (\Delta_n(t)) = \frac{ab^n \rho^{\otimes n}([0,t]^n)}{n!} = \frac{a(b\rho([0,t]))^n}{n!}, 
\end{align*}
where $\Delta_{n}(t) = \{(s_1,s_2,\dots, s_n) \in [0,t]^n:  s_1\ge s_2 \ge \cdots \ge s_n \ge 0 \}$. This implies that
\[
u_n(t) = a\sum_{k=0}^n\frac{(b\rho([0,t]))^k}{k!} \to u_\infty(t):=a\exp[b\rho([0,t])]
\]
locally uniformly as $n\to\infty$. Taking the limit in \eqref{eq:rec} we conclude that $u_\infty$ is the solution. 
\end{proof}

Now we state the main convergence result. 

\begin{theorem}\label{thm:conv_monotone} Let $(f_{t})_{t\ge0}$, $(f_t^{(n)})_{t\ge0}$, $n=1,2,3,\dots,$ be multiplicative Loewner chains, $(f_{s,t})_{t\ge s \ge 0}$, $(f_{s,t}^{(n)})_{t\ge s \ge 0}$ be their reverse evolution families and $(\nu_{s,t})_{t\ge s \ge0}$, $(\nu_{s,t}^{(n)})_{t\ge s \ge0}$ be the associate continuous $\circlearrowright$-CHs with generating families $(\alpha_t,\sigma_t)_{t\ge0}, (\alpha_t^{(n)},\sigma_t^{(n)})_{t\ge0}$, respectively.  
 The following conditions are equivalent as $n\to \infty$:  
\begin{enumerate}[label=\rm(M\arabic*)]
\item\label{condM1} $(f_t^{(n)})_{t\ge0}$ converges to $(f_t)_{t\ge0}$ locally uniformly on $\D \times [0,\infty)$; 

\item\label{condM2} $(f_{s,t}^{(n)})_{t\ge s \ge 0}$ converges to $(f_{s,t})_{t\ge s \ge 0}$ locally uniformly on $\D \times \{(s,t): 0 \le s \le t <\infty\}$; 

\item\label{condM3} $(\nu_{0,t}^{(n)})_{t\ge 0}$ weakly converges to $(\nu_{0,t})_{t \ge 0}$ locally uniformly on $[0,\infty)$;

\item\label{condM4} $(\nu_{s,t}^{(n)})_{t\ge s \ge 0}$ weakly converges to $(\nu_{s,t})_{t\ge s \ge 0}$ locally uniformly on $\{(s,t): 0 \le s \le t <\infty\}$;  

\item \label{condM5}$(\alpha_t^{(n)})_{t\ge0}$ converges to $(\alpha_t)_{t\ge0}$ locally uniformly on $[0,\infty)$ and the mutually equivalent conditions \ref{condG1}--\ref{condG4} in Proposition \ref{prop:convergence_equivalence} hold. 
\end{enumerate} 
\end{theorem}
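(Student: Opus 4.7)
The plan is to establish the equivalences in the following order: M2 $\Rightarrow$ M1 and M4 $\Rightarrow$ M3 are immediate specializations to $s=0$; M1 $\Leftrightarrow$ M3 and M2 $\Leftrightarrow$ M4 follow from the homeomorphism $\mu \mapsto \eta_\mu$ of \cite[Lemma 2.11]{FHS18}; M1 $\Rightarrow$ M2 exploits the factorization built into the reverse evolution family; and the core equivalence M1 $\Leftrightarrow$ M5 is reached through the Loewner integral equation. To promote the pointwise-in-$(s,t)$ homeomorphism $\mu \mapsto \eta_\mu$ to local uniformity in $(s,t)$, I would pass through the moment generating functions $\psi_{\nu_{s,t}}(z) = \sum_{n\ge 1} z^n \int_\T \xi^n\,d\nu_{s,t}$: in one direction Cauchy's integral formula converts locally uniform convergence of $\psi$ into locally uniform convergence of the Taylor coefficients (hence moments) and Stone--Weierstrass then yields weak convergence; in the other direction Vitali's theorem, with the uniform bound $|\psi(z)|\le |z|/(1-|z|)$, upgrades the pointwise convergence of $\psi$ coming from moment convergence to locally uniform convergence on $\D$, and the local uniformity in $(s,t)$ follows from equicontinuity. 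For M1 $\Rightarrow$ M2 I use $f^{(n)}_{s,t}=(f^{(n)}_{0,s})^{-1}\circ f^{(n)}_{0,t}$ together with Carath\'eodory's kernel convergence theorem for the univalent mappings $f^{(n)}_{0,s}$ fixing $0$, so that the inverses converge locally uniformly on compact subsets of $f_{0,s}(\D)$; joint uniformity in $(s,t)$ then comes from a standard normal-family argument plus uniqueness of subsequential limits guaranteed by \ref{TM3}.

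The heart of the proof is M1 $\Leftrightarrow$ M5. For M1 $\Rightarrow$ M5, Weierstrass' theorem gives $f^{(n)\prime}_t(0) \to f'_t(0)$ locally uniformly, and the continuous branch $\alpha^{(n)}_t = \arg f^{(n)\prime}_t(0)$ normalized by $\alpha^{(n)}_0=0$ converges locally uniformly to $\alpha_t$; hence the auxiliary chains $g^{(n)}_t(z):=f^{(n)}_t(e^{-i\alpha^{(n)}_t}z)$ converge locally uniformly to $g_t$. Differentiating the Loewner integral equation of $g^{(n)}_t$ at $z=0$ and applying Lemma \ref{lem:diff_eq} with $a=1$, $b=-1$, $\rho(\cdot)=\Pi^{(n)}(\T\times\cdot)$, I obtain the clean identity $|g^{(n)\prime}_t(0)|=\exp(-\Pi^{(n)}(\T\times[0,t]))$. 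Locally uniform convergence of the LHS in $t$ then forces $\Pi^{(n)}(\T\times[0,T])$ to be uniformly bounded for every $T$, and Prohorov's theorem on the compact space $\T\times[0,T]$ extracts a subsequence $\Pi^{(n_k)}$ converging weakly to some $\tilde\Pi$. Passing to the limit in the integral equation—using locally uniform convergence of $g^{(n_k)\prime}_s(z)$ in $s$ and the weak convergence of the complex measures $Q^{(n_k)}(z,\cdot)$ on $[0,T]$ produced by integrating the continuous, bounded kernel $(1+\xi z)/(1-\xi z)$ against $\Pi^{(n_k)}$—shows that $g_t$ satisfies the Loewner integral equation driven by the H-family built from $\tilde\Pi$; the uniqueness in Theorem \ref{thm:LIE} forces $\tilde\Pi=\Pi$. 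The whole sequence $\Pi^{(n)}$ therefore converges, and since the cocycle $(\xi,t)\mapsto(e^{i\alpha^{(n)}_t}\xi,t)$ converges locally uniformly in $t$, this transfers to weak convergence of $\Sigma^{(n)}|_{\T\times[0,T]}$, establishing \ref{condG4} of Proposition \ref{prop:convergence_equivalence}.

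For the reverse direction M5 $\Rightarrow$ M1 the same diagram runs backwards: locally uniform convergence of $\alpha^{(n)}$ together with vague convergence of $\Sigma^{(n)}$ gives vague convergence of $\Pi^{(n)}$, and integration against the bounded continuous kernel $(1+\xi z)/(1-\xi z)$ yields locally uniform convergence of $Q^{(n)}(z,[0,t])$ on compacta of $\D\times[0,\infty)$; a Picard-type iteration of the Loewner integral equation, modeled on the proof of Lemma \ref{lem:diff_eq} and exploiting uniform boundedness of $\partial_z g^{(n)}_s$ on compacta of $\D$ together with uniform boundedness of the total variation $|Q^{(n)}(z,[0,T])|$, then yields locally uniform convergence $g^{(n)}_t \to g_t$ and hence $f^{(n)}_t \to f_t$. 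The \emph{main obstacle} is this last stability step: because the driving H-family is only continuous (not absolutely continuous) in $t$, there is no Lipschitz constant available for a classical Gronwall argument, and one must iterate the integral equation by hand, using the uniform tightness supplied by M5 to dominate the successive approximations and push the Banach fixed-point contraction (as in the proof of Lemma \ref{lem:diff_eq}) through a partition of $[0,T]$ adapted to small increments of the driving total mass.
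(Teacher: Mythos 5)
Most of your architecture matches the paper's proof: your \ref{condM1} $\Rightarrow$ \ref{condM5} argument (via $g_t^{(n)}(z)=f_t^{(n)}(e^{-i\alpha_t^{(n)}}z)$, Lemma \ref{lem:diff_eq} giving $\partial_z g_t^{(n)}(0)=\exp(-\Pi^{(n)}(\T\times[0,t]))$, Prohorov, passage to the limit in the integral equation and the uniqueness in Theorem \ref{thm:LIE}) is exactly the paper's; your moment-generating-function route for \ref{condM2} $\Rightarrow$ \ref{condM4} and \ref{condM3} $\Rightarrow$ \ref{condM1} is essentially the paper's (the paper uses a direct tail-truncation of the series where you invoke Vitali, a harmless variation); and your soft normal-family/kernel-convergence treatment of \ref{condM1} $\Rightarrow$ \ref{condM2} is an acceptable, somewhat less quantitative alternative to the paper's Lagrange-inversion argument with a uniform lower bound on $d(f_t^{(n)}(K),f_s^{(n)}(r\T))$.

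The genuine gap is in \ref{condM5} $\Rightarrow$ \ref{condM1}. The Picard/Banach fixed-point scheme you propose does not exist for the Loewner integral equation in the form you describe: the unknown enters the equation through its $z$-derivative, $g_t(z)=z-z\int_0^t \partial_z g_s(z)\,Q(z,ds)$, so the map $g\mapsto z-z\int_0^t\partial_z g_s(z)\,Q(z,ds)$ is not a contraction in any sup norm on a fixed disc; Cauchy's estimate converts $\|\partial_z u\|_{r\D}$ into $\frac{1}{r'-r}\|u\|_{r'\D}$, and iterating this loses analyticity radius at a rate proportional to the total mass $\Pi^{(n)}(\T\times[0,T])$, which is not small. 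Lemma \ref{lem:diff_eq}, on which you model the iteration, works precisely because there the unknown appears \emph{without} a derivative; partitioning $[0,T]$ into pieces of small driving mass controls only the size of $Q^{(n)}$, not the loss of derivative, so the cumulative radius loss over the partition stays of order $\Pi^{(n)}(\T\times[0,T])$ and the scheme does not close. The paper avoids any quantitative stability estimate by a compactness-plus-uniqueness argument: the a priori bound of Theorem \ref{thm:estimate_LC} gives $|g_t^{(n)}(z)-g_s^{(n)}(z)|\le \frac{8|z|}{(1-|z|)^4}\,|e^{\Pi^{(n)}(\T\times[0,t])}-e^{\Pi^{(n)}(\T\times[0,s])}|$, and \ref{condM5} together with Polya's theorem (as in Proposition \ref{prop:convergence_equivalence}) makes $\Pi^{(n)}(\T\times(s,t])$ uniformly small for $t-s$ small; combined with Cauchy's integral formula this yields equicontinuity of $\{g^{(n)}\}$ on compacta of $\D\times[0,T]$, Arzel\`a--Ascoli produces a subsequential locally uniform limit $\tilde g$, one passes to the limit in the integral equation, and the \emph{uniqueness} part of Theorem \ref{thm:solution_to_LIE} (itself proved by time reparametrization to the absolutely continuous setting, not by iteration) identifies $\tilde g=g$, whence the whole sequence converges. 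You should replace your iteration step by this compactness argument, or supply a genuinely different mechanism to handle the loss of the $z$-derivative; as written, the key step fails.
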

\begin{proof}  
Let $g_t(z) = f_t(e^{-i\alpha_t}z)$ and $g_t^{(n)}=f_t^{(n)}(e^{-i\alpha_t^{(n)}}z)$. 
We will proceed as \ref{condM1} $\Leftrightarrow$ \ref{condM2}, \ref{condM1} $\Leftrightarrow$ \ref{condM5}, \ref{condM2} $\Rightarrow$ \ref{condM4} $\Rightarrow$ \ref{condM3}  $\Rightarrow$ \ref{condM1}. Among them the implications \ref{condM2} $\Rightarrow$ \ref{condM1} and \ref{condM4} $\Rightarrow$ \ref{condM3} are obvious. 

\vspace{2mm}\noindent\underline{\ref{condM1} $\Rightarrow$ \ref{condM2}.}  Recall that $f_{s,t}^{(n)}:= (f_s^{(n)})^{-1} \circ f_t^{(n)}$ and $f_{s,t}:= f_s^{-1} \circ f_t$. Fix $T>0$ and $0<r_0<r<1$ and let $K= r_0 \overline{\D}$. Since $f_{s,t}^{(n)}(r \overline{\D}) \subset r \overline{\D}$ by the Schwarz lemma, we obtain
\begin{equation}\label{eq:Sch}
f_t^{(n)}(K) \subset\subset f_t^{(n)}(r \overline{\D}) =  f_s^{(n)}(f_{s,t}^{(n)}(r \overline{\D}))\subset f_s^{(n)}(r \overline{\D})
\end{equation}
for all $0 \le s \le t <\infty $ and $n\ge1$. Therefore, $f_t^{(n)}(K)$ is surrounded by the simple curve $f_s^{(n)}(r \T)$ and they do not have intersection, and hence the Lagrange inversion formula for $(f_s^{(n)})^{-1}$ implies 
\[
f_{s,t}^{(n)}(z)=(f_s^{(n)})^{-1}(f_t^{(n)}(z)) = \frac{1}{2\pi i} \int_{r\T} \frac{w (f_s^{(n)})'(w)}{f_s^{(n)}(w)-f_t^{(n)}(z)}\,dw, \qquad z \in K,   n \ge n_0. 
\]
Therefore, in order to prove the uniform convergence of $f_{s,t}^{(n)} \to f_{s,t}$ on $K\times\{(s,t): 0 \le s \le t \le T\}$, it suffices to prove that 
\begin{equation}\label{incl}
c:=\inf_{0 \le s \le t \le T, n\ge1}d(f_t^{(n)}(K), f_s^{(n)}(r\T))>0, 
\end{equation}
where $d(\cdot, \cdot)$ is the Euclidean distance of sets. 
Since the inclusion $f_t^{(n)}(K) \subset f_s^{(n)}(K)$ holds from the same arguments used in \eqref{eq:Sch}, we have actually 
\[
c=\inf_{0 \le s \le T, n\ge1}d(f_s^{(n)}(K), f_s^{(n)}(r\T)). 
\]
Now suppose to the contrary that $c=0$. We may find a sequence $\{s_k\}_{k=1}^\infty$ of $[0,T]$ and a sequence $\{n_k\}_{k\ge1}$ of $\N$ such that 
\begin{equation}\label{eq:hyp}
d(f_{s_k}^{(n_k)}(K), f_{s_k}^{(n_k)}(r\T)) \to 0.   
\end{equation}

{\bf Case (a):} if $\{n_k\}$ is unbounded, we may assume, passing to a subsequence if necessary, that $s_k$ converges to a certain $u$ and that $n_1 < n_2 < \cdots$.  Since $f_t^{(n)}$ converges to $f_t$ uniformly on $K \times [0,T]$, we have, for every $\epsilon>0$,  
\[
\exists N \in \N~ \forall n \ge N ~\forall s \in [0,T]:   f_s^{(n)}(K) \subset f_s(K)^\epsilon, f_s^{(n)}(r\T) \subset f_s(r\T)^\epsilon,  
\]  
where $A^\epsilon$ is the epsilon neighborhood $\{z \in \C: d(z,A)<\epsilon\}$ of a subset $A\subset \C$. Furthermore, we have $f_{s_k}^{(n_k)}(K) \subset f_{s_k}(K)^\epsilon \subset f_{u}(K)^{2\epsilon}$ and $f_{s_k}^{(n_k)}(r\T) \subset f_{s_k}(r\T)^\epsilon\subset f_{u}(r\T)^{2\epsilon}$ whenever $k$ is large enough, and hence,  by choosing $\epsilon= \frac{1}{5}d(f_{u}(K), f_u(r\T))>0$ we get
$$
d(f_{s_k}^{(n_k)}(K), f_{s_k}^{(n_k)}(r\T)) \ge d(f_{u}(K)^{2\epsilon}, f_{u}(r\T)^{2\epsilon}) \ge \epsilon, 
$$
 a contradiction to \eqref{eq:hyp}. 
 
{\bf Case (b):} if $\{n_k\}$ is bounded then we can assume that $n_1=n_2=\cdots $ by passing to a subsequence. The remaining arguments are simpler than case (a). 
 
\vspace{2mm}\noindent\underline{\ref{condM1} $\Rightarrow$ \ref{condM5}.} Fix $T>0$. By the assumption, $\exp[i\alpha_t^{(n)}]=(f_t^{(n)})'(0)/|(f_t^{(n)})'(0)|$ converges to $\exp[i\alpha_t]$ uniformly, and so does $\alpha_t^{(n)}$ to $\alpha_t$. This also implies that $g_t^{(n)}$ converges to $g_t$ locally uniformly on $\D \times [0,T]$. Let $\Pi$ and $\Pi^{(n)}$ be the measures associated to $g_t$ and $g_t^{(n)}$ in Definition \ref{def:generating}, respectively. The Loewner integral equation 
\begin{equation}\label{eq:int}
g_t^{(n)}(z) = z - z \int_{ \T\times [0,t]} \partial_z g_s^{(n)}(z) \frac{1+ \xi z}{1- \xi z} \Pi^{(n)}(d\xi \times ds), \qquad z\in \D, ~t \in [0,T]
\end{equation}
specializes to 
\[
\partial_z g_t^{(n)}(0) = 1- \int_0^t \partial_z g_s^{(n)}(0) \,\Pi^{(n)}(\T,ds). 
\]
Lemma \ref{lem:diff_eq} yields $\partial_z g_t^{(n)}(0) = \exp(- \Pi^{(n)}(\T\times[0,t]))$, and hence 
\begin{equation}\label{eq:loc_unif_sigman}
\Sigma^{(n)}(\T\times[0,t])=\Pi^{(n)}(\T\times[0,t])\to \Sigma(\T\times[0,t])
\end{equation}
 uniformly on $[0,T]$. In particular, $\Sigma^{(n)}|_{\T \times [0,T]}$ is uniformly bounded, and hence by Prohorov's theorem (see Theorem \ref{thm:Prohorov}), it has a subsequence, still denoted by $\Sigma^{(n)}|_{\T \times [0,T]}$, weakly converging to some finite non-negative measure on $\T \times [0,T]$, denoted by $\tilde\Sigma$. From \eqref{eq:loc_unif_sigman} we infer that the marginal measure $\Sigma^{(n)}(\T \times \cdot)$ weakly converges to both $\Sigma(\T\times \cdot)$ and $\tilde \Sigma(\T\times \cdot)$ on $[0,T]$, and hence those limits coincide. In particular, $t\mapsto \tilde \Sigma(\T\times [0,t])$ is continuous. The measure $\Pi^{(n)}|_{\T \times [0,T]}$ also converges weakly to the measure $\tilde\Pi$ corresponding to $\tilde \Sigma$ because the push-forward map $(\xi,t)\mapsto (\exp(i \alpha_t^{(n)})\xi,t)$ is uniformly convergent as $n\to\infty$. Passing to the limit in \eqref{eq:int} we obtain 
\[
g_t(z) = z - z \int_{ \T\times [0,t]} \partial_z g_s(z) \frac{1+ \xi z}{1- \xi z} \tilde\Pi(d\xi \times ds), \qquad z\in \D, ~t \in [0,T].
\]
By the uniqueness (Theorem \ref{thm:LIE}) we conclude that $\tilde \Pi = \Pi|_{\T \times [0,T]}$ and hence $\tilde\Sigma=\Sigma|_{\T \times [0,T]}$. These arguments verify condition \ref{condG4}. 

\vspace{2mm}\noindent\underline{\ref{condM5} $\Rightarrow$ \ref{condM1}.} 
By Theorem \ref{thm:estimate_LC}, we have 
\begin{align}
|g_t^{(n)}(z) - g_s^{(n)}(z)| 
&\le \frac{8|z|}{(1-|z|)^4} |e^{\Pi^{(n)}(\T\times[0,t])} - e^{\Pi^{(n)}(\T\times[0,s]))}|  \label{eq:est}
\end{align}
for $z \in \D$ and $t\ge s \ge 0$. Fix $T>0$, a compact subset $K$ of $\D$ and $r\in(0,1)$ such that $K \subset  r\D $. Let $d$ be the distance of $r\T$ and $K$.  Combining \eqref{eq:est} and Cauchy's integral formula yields that, for all $z,w \in K$ and $s,t \in [0,T]$, 
\begin{align}
|g_t^{(n)}(z) - g_s^{(n)}(w)|  
& \le |g_t^{(n)}(z) - g_t^{(n)}(w)| +|g_t^{(n)}(w) - g_s^{(n)}(w)|  \notag\\ 
&\le \int_{r\T} \frac{  |z-w| |g_t^{(n)}(\xi)|}{2\pi|(\xi -z)(\xi -w)|} |d\xi| + \int_{r\T} \frac{|g_t^{(n)}(\xi) - g_s^{(n)}(\xi)|}{2\pi|\xi -w|} |d\xi|  \notag \\
&\leq \frac{r}{d^2}|z-w| + \frac{8r^2 e^{\Pi^{(n)}(\T,[0,T])}}{d (1-r)^4} |1- e^{-\Pi^{(n)}(\T,(s,t])}|. \label{eq:est2}
\end{align}
 As in the proof of Proposition \ref{prop:convergence_equivalence} we have 
\begin{equation}\label{eq:equi}
\lim_{h\to0} \sup_{\substack{s,t \in [0,T], 0 \le t-s <h\\n \in \N}} |\Pi^{(n)}(\T,(s,t])| =0. 
\end{equation}
 Combining \eqref{eq:est2} and \eqref{eq:equi} implies that the family $\{g_t^{(n)}(z)\}_{n\ge1}$ is equi-continuous on $[0,T] \times K$. It is $\D$-valued and hence uniformly bounded.  By Arzel\`a-Ascoli's theorem $\{g_t^{(n)}(z)\}_{n\ge1}$ has a convergent subsequence in the locally uniform topology on $C([0,\infty) \times \D)$. Denote by $\tilde g_t$ its limit. In \eqref{eq:int} passing to the limit one arrives at 
\[
\tilde g_t(z) = z - z \int_{\T \times [0,t]} \partial_z \tilde g_s(z) \frac{1+ \xi z}{1- \xi z} \Pi(d\xi \times ds). 
\]
By the uniqueness of the solution we conclude that $\tilde g_t = g_t$. Therefore, the whole sequence $\{g_t^{(n)}\}_{n\ge 1}$ converges to $g_t$ locally uniformly on $[0,\infty) \times \D$ and so does $\{f_t^{(n)}\}_{n\ge 1}$ to $f_t$. 

\vspace{2mm}\noindent\underline{\ref{condM2} $\Rightarrow$ \ref{condM4}.}  Recall that $f_{s,t}^{(n)} =\eta_{\nu_{s,t}^{(n)}}$ and $f_{s,t} =\eta_{\nu_{s,t}}$.  The functions $\psi_{s,t}^{(n)}:= f_{s,t}^{(n)}/(1-f_{s,t}^{(n)})$ converge to $\psi_{s,t}:=f_{s,t}/(1-f_{s,t})$ locally uniformly on $s,t,z$ as well. Using the formula
\[
 \int_\T \xi^k \,d\nu_{s,t}^{(n)}(\xi) =  \frac{1}{2\pi i}\int_{|\xi|=1/2} \frac{\psi_{s,t}^{(n)}(\xi)}{\xi^{k+1}}\,d\xi\qquad \text{and}\qquad \int_\T \xi^k \,d\nu_{s,t}(\xi) =  \frac{1}{2\pi i}\int_{|\xi|=1/2} \frac{\psi_{s,t}(\xi)}{\xi^{k+1}}\,d\xi
\]
one obtains the desired conclusion for test functions $f(\xi)= \xi^k, k\in \N$. For general continuous functions one may use Stone-Weierstrass' theorem.

\vspace{2mm}\noindent\underline{\ref{condM3} $\Rightarrow$ \ref{condM1}.}  Recall that $f_{t}^{(n)} =\eta_{\nu_{0,t}^{(n)}}$ and $f_{t} =\eta_{\nu_{0,t}}$. Take $r\in (0,1)$ and $T,\epsilon>0$.  Pick $p\in \N$ such that $\sum_{k> p} r^k <\epsilon$. We have
\begin{align*}
|\psi_{0,t}^{(n)}(z) - \psi_{0,t}(z)| \le \sum_{k =1}^p \left| \int_\T \xi^k \,d\nu_{0,t}^{(n)}(\xi) -\int_\T \xi^k \,d\nu_{0,t}(\xi)\right| + 2\epsilon, \qquad t \ge0, z \in r\D. 
\end{align*}
Taking the supremum over $(z,t) \in r\D \times  [0,T]$ and letting $n\to\infty$ show the locally uniform convergence of $\psi_{0,t}^{(n)}(z)$ and hence of $f_t^{(n)}(z)$. 
\end{proof}
Combining Theorem \ref{thm:conv_classical} and Theorem \ref{thm:conv_monotone} establishes the following property of $\Theta_M$.

\begin{corollary} The bijection $\Theta_M$ is a homeomorphism with respect to the locally uniform weak convergence of convolution hemigroups. 
\end{corollary}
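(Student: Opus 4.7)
The plan is to observe that $\Theta_M$ is defined so as to preserve the generating family $(\alpha_t,\sigma_t)_{t\ge0}$, and that both the source and target topologies have already been translated into the very same condition on the generating family. Thus the corollary reduces to a triangulation through the space of generating families.

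More precisely, I would first unfold the definitions. Given a sequence $(\mu_{s,t}^{(n)})$ of continuous $\circledast$-CHs with generating families $(\alpha_t^{(n)},\sigma_t^{(n)})_{t\ge0}$, and letting $(\nu_{s,t}^{(n)}) = \Theta_M((\mu_{s,t}^{(n)}))$, by Definition \ref{def:c-f} these $\circlearrowright$-CHs have exactly the same generating families $(\alpha_t^{(n)},\sigma_t^{(n)})_{t\ge0}$. The same applies to the limit objects $(\mu_{s,t})$ and $(\nu_{s,t})=\Theta_M((\mu_{s,t}))$, with common generating family $(\alpha_t,\sigma_t)_{t\ge0}$.

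Then I would invoke Theorem \ref{thm:conv_classical}: locally uniform weak convergence $(\mu_{s,t}^{(n)}) \to (\mu_{s,t})$ (condition \ref{condC2}) is equivalent to \ref{condC3}, namely locally uniform convergence $\alpha_t^{(n)}\to\alpha_t$ on $[0,\infty)$ together with any of the mutually equivalent conditions \ref{condG1}--\ref{condG4} for $(\sigma_t^{(n)})$ and $(\sigma_t)$. On the other hand, Theorem \ref{thm:conv_monotone} shows that locally uniform weak convergence $(\nu_{s,t}^{(n)}) \to (\nu_{s,t})$ (condition \ref{condM4}) is equivalent to \ref{condM5}, which is \emph{verbatim} the same condition on the generating family. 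Since both convergences are characterized by one and the same property of the generating families, they are equivalent to each other, proving the continuity of $\Theta_M$ and $\Theta_M^{-1}$ simultaneously.

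There is essentially no obstacle: all the analytic work is already carried out in Theorems \ref{thm:conv_classical} and \ref{thm:conv_monotone}, and the main point is simply to note that the generating family is the common invariant used to define $\Theta_M$. The only care needed is to check that the topologies used in \ref{condC2} and \ref{condM4} are indeed the ones on the ``locally uniform weak convergence of convolution hemigroups'' in the statement of the corollary (indexed respectively by $\{(s,t):0\le s\le t<\infty\}$ in both cases, in the sense of Definition \ref{def:local_weak}); this is immediate from the way those conditions are phrased.
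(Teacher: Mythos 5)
Your proposal is correct and follows exactly the paper's argument: the paper likewise proves the corollary by combining Theorem \ref{thm:conv_classical} and Theorem \ref{thm:conv_monotone}, using that $\Theta_M$ preserves the generating family and that conditions \ref{condC3} and \ref{condM5} are identical. Nothing is missing; your explicit check that \ref{condC2} and \ref{condM4} are the topologies meant in the statement is a reasonable (if routine) addition.
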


Note that the bijection $\Theta_M$ induces a bijection between classical and monotone continuous convolution semigroups. 
This is also homeomorphic. 

\begin{remark}
The conditions \ref{condM1}--\ref{condM5} are also equivalent to 
\begin{enumerate}[label=(M\arabic*')]
\item\label{condM1'} $\alpha_t^{(n)}$ converges to $\alpha_t$ locally uniformly on $[0,\infty)$, and $g_t^{(n)}$ converges  to $g_t$ locally uniformly on $\D$ for every $t\ge0$,  
\end{enumerate}
where $g_t, g_t^{(n)}$ are the Loewner chains introduced in the proof of Theorem \ref{thm:conv_monotone}. 
The proof of the implication \ref{condM1'} $\Rightarrow$ \ref{condM1} is sketched below. Since $\Pi(\T\times [0,t]) = -\log g_t'(0)$ and $\Pi^{(n)}(\T\times [0,t])$ are continuous and non-decreasing, by Polya's theorem \cite[Theorem 4.5]{BW16}, the pointwise convergence implies the locally uniform convergence. Then we can prove that the family $\{g_t^{(n)}\}_{n\ge1}$ is equi-continuous on each compact subset of $\D \times [0,\infty)$ by the estimates \eqref{eq:est2} and \eqref{eq:equi}.

We cannot replace \ref{condM1'} by ``$f_t^{(n)}$ converges  to $f_t$ locally uniformly on $\D$ for every $t\ge0$'', because $(f_t^{(n)})'(0)=\exp(i\alpha_t^{(n)})$ and $f_t'(0)=\exp(i\alpha_t)$ can be quite general continuous functions, so that just the pointwise convergence does not imply the locally uniform convergence in general.  
\end{remark}

\begin{remark}
Theorem \ref{thm:conv_monotone} has intersections with several known convergence results. Roth gave a sufficient condition for a locally uniform convergence of Loewner chains in \cite[Lemma I.37]{Rot98}, where the Denjoy-Wolff points of Loewner chains are arbitrary (see also \cite[Lemma 3.1]{Hot19}; Roth puts an additional parameter).  Johansson, Sola and Turner gave a sufficient condition for the convergence of Loewner chains at a fixed time in the setting that $\alpha_t=\alpha_t^{(n)}=0$ and that $\Sigma(\T\times \cdot), \Sigma^{(n)}(\T\times \cdot)$ are the Lebesgue measure \cite[Proposition 1]{JST12}. Miller and Sheffield proved the equivalence \ref{condM1'} $\Leftrightarrow$ \ref{condM5} when $\alpha_t=\alpha_t^{(n)}=0$ and $\Sigma(\T\times \cdot), \Sigma^{(n)}(\T\times \cdot)$ are the Lebesgue measure \cite[Theorem 1.1]{MS16}. 
\end{remark}

\section{Bijections between classical, free and boolean convolution hemigroups}\label{sec:c-f}
This section aims to add further two objects in Figure \ref{fig:star} in Section \ref{sec:main}: the set of continuous free CHs and the set of continuous boolean CHs. Also, an embedding of free CHs into monotone CHs, based on subordination, will be discussed. 

In this section we use the notion of infinite divisibility. Given an associative binary operation $\star$ on the set of probability measures on $\T$, a probability measure $\mu$ is said to be \emph{$\star$-infinitely divisible} (or $\star$-ID for short) if, for every $n\in\N$, there exists a probability measure $\mu_n$ such that $\mu= \mu_n\star \mu_n \star \cdots \star \mu_n$ ($n$-fold).

\subsection{Free convolution hemigroups on the unit circle}\label{sec:c-f2}

A $\circledast$-ID distribution without idempotent factors (or equivalently, with non-zero mean) has a L\'evy-Khintchine representation, but its L\'evy measure is not unique in general, see \cite[Remark 3, Chapter IV]{Par67}. Because of this, one is unable to define a bijection of Bercovici-Pata type from the set  of $\circledast$-ID distributions with non-zero mean to the set of $\boxtimes$-ID distributions with non-zero mean. For further discussions, see \cite{Ceb16,CG08b}. 
However, thanks to the uniqueness of the continuous family $(\alpha_t,\sigma_t)_{t\ge0}$ in Theorem \ref{thm:LK} we are able to define a dynamical version, that is, a bijection from continuous $\circledast$-CHs to continuous $\boxtimes$-CHs, see Definition \ref{def:c-f}. 

Here we introduce some notions in free probability; the reader is referred to \cite{BV92} for further details. For free unitary operators $U$ and $V$ whose distributions on $\T$ are $\mu$ and $\nu$ respectively, the distribution of $UV$ is called the \emph{free multiplicative convolution} of $\mu$ and $\nu$, denoted by $\mu \boxtimes \nu$. Free multiplicative convolution can be computed by the $\Sigma$-transform. 
If the mean $\int_{\T} \xi \,d\mu(\xi)$ of a probability measure $\mu$ is non-zero, then the inverse function $\eta_\mu^{-1}(z)$ can be defined in a neighborhood of 0 as a convergent power series, and the function 
\begin{equation}\label{eq:sigma-transform}
\Sigma_\mu(z) = \frac{\eta^{-1}_\mu(z)}{z}
\end{equation}
defined in the neighborhood is called the \emph{$\Sigma$-transform}. For two probability measures $\mu$ and $\nu$ on $\T$ with non-zero means it holds that
\[
\Sigma_{\mu\boxtimes\nu}(z)=\Sigma_\mu(z) \Sigma_\nu(z)
\]
on the common domain. If $\Sigma_\mu=\Sigma_\nu$ in a neighborhood of 0 then $\mu=\nu$. 

The definition of freeness yields that a $\boxtimes$-ID distribution is the normalized Haar measure if and only if its mean is zero. A $\boxtimes$-ID distribution $\mu$ with a non-zero mean has the \emph{free L\'evy-Khintchine representation} 
\begin{equation}\label{eq:freeMID}
\Sigma_{\mu}(z) = \exp \left( -i \alpha + \int_\T \frac{1+\xi z}{1-\xi z}\,\sigma(d\xi)  \right)  
\end{equation}
 in the domain of $\Sigma_\mu$, where $\alpha \in \R$ and $\sigma$ is a non-negative finite measure on $\T$. In this case the function $\Sigma_\mu$ can be extended to $\D$. The measure $\sigma$ is unique and $\alpha$ is unique up to translations by $2\pi \Z$. 
 
Conversely, given  $\alpha \in \R$ and a non-negative finite measure $\sigma$ on $\R$, there exists a $\boxtimes$-ID distribution $\mu$ such that \eqref{eq:freeMID} holds.  The pair $(\alpha,\sigma)$ is called a generating pair.

The main result of this section is the following. 
\begin{theorem}[Free L\'evy-Khintchine representation for $\boxtimes$-CHs] \label{thm:FLK}
For a continuous $\boxtimes$-CH $(\lambda_{s,t})_{t\ge s\ge0 }$ on $\T$ there exists a unique family $(\alpha_t,\sigma_t)_{t\ge0}$ with \ref{LK1} and \ref{LK2} and 
\begin{equation}\label{eq:FLKT}
\Sigma_{\lambda_{0,t}}(z) = \exp \left(-i \alpha_t + \int_\T \frac{1+\xi z}{1-\xi z}\,\sigma_t(d\xi)  \right), \qquad z \in \D,~t\ge0.
\end{equation}
Conversely, given a family $(\alpha_t,\sigma_t)_{t\ge0}$ satisfying \ref{LK1} and \ref{LK2} there exists a unique continuous $\boxtimes$-CH $(\lambda_{s,t})_{t\ge s\ge0 }$ on $\T$ for which \eqref{eq:FLKT} holds. The family $(\alpha_t,\sigma_t)_{t\ge0}$ is called the \emph{generating family} for the corresponding continuous $\boxtimes$-CH. 
\end{theorem}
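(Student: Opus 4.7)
The plan is to follow the structure of the proof of Theorem \ref{thm:LK}, replacing the classical L\'evy--Khintchine formula \eqref{eq:standardLK} by the Bercovici--Voiculescu free L\'evy--Khintchine formula \eqref{eq:freeMID}. The central algebraic input is that a $\boxtimes$-ID distribution on $\T$ with non-zero mean is uniquely parametrised by a pair $(\alpha,\sigma)$ with $\alpha\in \R/2\pi\Z$ and $\sigma$ a finite non-negative measure on $\T$. Thus the strategy is: extract such a pair at each $t$ from the marginal $\lambda_{0,t}$, use the hemigroup identity to obtain the additivity and monotonicity, and use continuity of the hemigroup to upgrade to the regularity properties \ref{LK1} and \ref{LK2}.

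First I would fix $t\ge 0$ and show that each marginal $\lambda_{0,t}$ is $\boxtimes$-ID with non-zero mean. The infinite divisibility follows because the hemigroup identity $\lambda_{0,t} = \lambda_{0,t/n} \boxtimes \lambda_{t/n,2t/n}\boxtimes\cdots\boxtimes \lambda_{(n-1)t/n, t}$ exhibits $\lambda_{0,t}$ as a limit of free convolutions of infinitesimal probability measures, and such limits are $\boxtimes$-ID by the free Bercovici--Pata-type theory on $\T$ (as in \cite{Wan08,CG08b,HH}). To exclude the Haar-measure case, I use that the mean is multiplicative under $\boxtimes$ (since $\tau(UV)=\tau(U)\tau(V)$ for free unitaries): if $\int\xi\,d\lambda_{0,t_1}=0$ at some first time $t_1>0$, then $\int\xi\,d\lambda_{s,t_1}=0$ for all $s<t_1$, contradicting $\lim_{s\to t_1^-}\lambda_{s,t_1}=\delta_1$. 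Applying \eqref{eq:freeMID} then produces $(\alpha_t,\sigma_t)$ satisfying \eqref{eq:FLKT}; evaluating at $z=0$ gives the identities $e^{-\sigma_t(\T)}=|\int\xi\,d\lambda_{0,t}|$ and $e^{i\alpha_t}=\int\xi\,d\lambda_{0,t}/|\int\xi\,d\lambda_{0,t}|$, which allow me to pick a unique continuous branch of $\alpha_t$ with $\alpha_0=0$, and force $\sigma_0=0$.

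For \ref{LK2}, I would apply the hemigroup identity $\Sigma_{\lambda_{0,t}}=\Sigma_{\lambda_{0,s}}\Sigma_{\lambda_{s,t}}$ and take logarithms: uniqueness of the Herglotz representation (just as in \eqref{eq:LK-unique}) gives $\sigma_t=\sigma_s+\sigma_{s,t}$ where $\sigma_{s,t}$ is the generating measure of the $\boxtimes$-ID distribution $\lambda_{s,t}$; monotonicity is then immediate from $\sigma_{s,t}\ge 0$. Continuity of $t\mapsto\sigma_t(B)$ for each $B\in\cB(\T)$ follows from $\lambda_{s,t}\to\delta_1$ weakly as $s\to t$, which forces $(\alpha_{s,t},\sigma_{s,t})\to(0,0)$; here I invoke the weak continuity of the free Bercovici--Pata-type map on $\T$, which is obtained by mimicking Proposition \ref{prop:convergence_equivalence} with $\Sigma$-transforms replacing characteristic functions. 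The uniqueness of the whole family $(\alpha_t,\sigma_t)_{t\ge 0}$ is inherited pointwise from that of the Bercovici--Voiculescu representation, together with the continuity and the normalisation $\alpha_0=0$.

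The converse is constructive: given $(\alpha_t,\sigma_t)$ satisfying \ref{LK1} and \ref{LK2}, define $\lambda_{s,t}$ to be the unique $\boxtimes$-ID distribution with generating pair $(\alpha_t-\alpha_s,\sigma_t-\sigma_s)$, which is well-defined since $\sigma_t-\sigma_s$ is a non-negative finite measure by the monotonicity in \ref{LK2}. The cocycle property $\lambda_{s,u}=\lambda_{s,t}\boxtimes\lambda_{t,u}$ reduces to additivity of generating pairs, and $\lambda_{t,t}=\delta_1$ is immediate. Weak continuity of $(s,t)\mapsto\lambda_{s,t}$ follows from continuity of $\Sigma_{\lambda_{s,t}}$ in $(s,t)$ combined with the homeomorphism between probability measures on $\T$ with non-zero mean and their $\Sigma$-transforms near $0$. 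The main obstacle is the careful handling of the free Bercovici--Pata-type correspondence on $\T$: both the $\boxtimes$-ID property of each $\lambda_{0,t}$ and the weak continuity of the pair-to-measure map require genuinely free input, after which the remaining argument is essentially a transcription of the classical one.
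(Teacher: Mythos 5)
Your proposal is correct and follows essentially the same route as the paper: the paper isolates your two key inputs as Lemma \ref{lem:free1} (each $\lambda_{s,t}$ is $\boxtimes$-ID via an infinitesimal-array Khintchine-type theorem, with non-zero mean via multiplicativity of the mean and a first-zero-time contradiction) and Lemma \ref{lem:free2} (the generating pair of a $\boxtimes$-ID law depends weakly continuously on the law and conversely), and then derives \ref{LK1}, \ref{LK2}, uniqueness, and the converse exactly as you outline. The only cosmetic difference is in references: the paper invokes \cite{BB08} for the Hin\v{c}in-type step and proves the continuity lemma via \cite[Proposition 2.9]{BV92} together with Helly/Prohorov compactness, rather than by adapting Proposition \ref{prop:convergence_equivalence}.
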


As analogy to the classical case, Theorem \ref{thm:FLK} implies the free L\'evy-Khintchine representation for the increments 
\begin{equation}\label{eq:increments_f}
\Sigma_{\lambda_{s,t}}(z) = \exp \left(-i \alpha_{s,t} + \int_\T \frac{1+\xi z}{1-\xi z}\,\sigma_{s,t}(d\xi)  \right), \qquad z \in \D,~ 0 \le s \le t, 
\end{equation}
where $\alpha_{s,t}= \alpha_t -\alpha_s$ and $\sigma_{s,t}=\sigma_t - \sigma_s$. 

For the proof of Theorem \ref{thm:FLK} we prepare some lemmas.

\begin{lemma}\label{lem:free1} Let $(\lambda_{s,t})_{t\ge s\ge0 }$ be a continuous $\boxtimes$-CH. For every $t \ge s\ge 0$ the measure $\lambda_{s,t}$ is $\boxtimes$-ID and has a non-zero mean. 
\end{lemma}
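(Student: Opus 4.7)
The plan is to prove the two assertions separately, exploiting the hemigroup identity $\lambda_{s,u}=\lambda_{s,t}\boxtimes\lambda_{t,u}$ iterated along fine partitions of $[s,t]$ together with the weak continuity of $(s,t)\mapsto\lambda_{s,t}$.

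For the non-zero mean, the key fact I would record first is the multiplicativity of the mean under $\boxtimes$: for any probability measures $\mu,\nu$ on $\T$,
\begin{equation*}
\int_\T \xi\,d(\mu\boxtimes\nu)(\xi) = \int_\T \xi\,d\mu(\xi)\cdot \int_\T \xi\,d\nu(\xi),
\end{equation*}
which is a one-line consequence of applying freeness to the centred unitaries $U-\mathbb{E}[U]$ and $V-\mathbb{E}[V]$. Since $(s',t')\mapsto \int_\T \xi\,d\lambda_{s',t'}(\xi)$ is then continuous and takes the value $1$ on the diagonal $\{s'=t'\}$, a standard compactness argument on $[s,t]$ produces a partition $s=t_0<t_1<\cdots<t_n=t$ on which each factor $\lambda_{t_{i-1},t_i}$ has non-vanishing mean; iterating the hemigroup identity then yields $\int_\T \xi\,d\lambda_{s,t}(\xi)=\prod_{i=1}^n\int_\T \xi\,d\lambda_{t_{i-1},t_i}(\xi)\neq 0$.

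For $\boxtimes$-infinite divisibility, I would express $\lambda_{s,t}$ as the $n$-fold free multiplicative convolution of a $\boxtimes$-infinitesimal triangular array and invoke the free analogue of Hincin's theorem for $\boxtimes$ on $\T$ available in the free-probability literature. Concretely, setting $t_i^{(n)}=s+i(t-s)/n$ and $\mu_{n,i}=\lambda_{t_{i-1}^{(n)},t_i^{(n)}}$, iteration of the hemigroup identity gives $\lambda_{s,t}=\mu_{n,1}\boxtimes\cdots\boxtimes\mu_{n,n}$ for every $n$, while uniform continuity of the CH on the compact set $\{(s',t'):s\le s'\le t'\le t\}$, combined with $\lambda_{u,u}=\delta_1$, forces $\max_{1\le i\le n} d(\mu_{n,i},\delta_1)\to 0$ for any metric $d$ compatible with weak convergence. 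The constant-in-$n$ weak limit $\lambda_{s,t}$ is therefore $\boxtimes$-ID.

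The main obstacle I foresee is pinning down and citing the precise form of the free Hincin-type theorem for $\boxtimes$ on $\T$: the real-line version is classical via Bercovici-Pata, but on the circle one must accommodate the fact that every $\boxtimes$-ID measure with zero mean coincides with the normalized Haar measure. Part (i) above already removes the zero-mean degeneracy in our setting, so once the appropriate reference is identified the rest of the argument is a direct application of the hemigroup structure and the weak continuity of $(s,t)\mapsto\lambda_{s,t}$.
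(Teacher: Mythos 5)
Your proof is correct, and for the $\boxtimes$-infinite divisibility it is essentially the paper's argument: equal-spacing partition of $[s,t]$, infinitesimality of the triangular array $\{\lambda_{t_{i-1}^{(n)},t_i^{(n)}}\}$ deduced from the weak continuity of the hemigroup and $\lambda_{u,u}=\delta_1$ (the paper phrases this as a sequential contradiction via the Portmanteau theorem rather than via metrizability and uniform continuity, but the content is the same), followed by the multiplicative free analogue of Hin\v{c}in's theorem. The reference you were unsure about is \cite[Theorem 1.1]{BB08} (Belinschi--Bercovici); the paper invokes it with no non-degeneracy hypothesis, and indeed none is needed since the normalized Haar measure, the unique zero-mean $\boxtimes$-ID law, is itself $\boxtimes$-ID, so your concern about the zero-mean case does not obstruct this step (and is in any case dispatched by your argument on means). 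Where you genuinely differ is the non-vanishing of the mean: you use the multiplicativity $m(s,u)=m(s,t)\,m(t,u)$ together with uniform continuity of $(s',t')\mapsto m(s',t')$ on the compact triangle to choose a partition fine enough that every factor has mean close to $1$, then multiply; the paper instead argues by contradiction, taking the first time $t_1=\min\{t>s_0:\,m(s_0,t)=0\}$ at which the mean vanishes, concluding $m(t,t_1)=0$ for $s_0\le t<t_1$ and letting $t\uparrow t_1$ to contradict $m(t_1,t_1)=1$. Both routes rest on the same two ingredients (multiplicativity of the mean under $\boxtimes$ and continuity of the hemigroup); yours is slightly more quantitative and constructive, the paper's avoids choosing a partition.
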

\begin{proof}
We can prove that, for every $\epsilon,T>0$,  
\begin{equation}\label{eq:IA}
\lim_{\delta \to0^+} \sup_{\substack{0 \le s \le t \le T \\ 0\le t-s \le \delta}} \lambda_{s,t}(\{\xi \in\T: |\xi-1| \ge \epsilon\})=0,  
\end{equation}
because otherwise there would be a sequence $\delta_k\downarrow0$, $0\le s_k \le t_k \le T$ and an $\alpha>0$ such that $0 \le t_k - s_k \le \delta_k$ and 
\[
\lambda_{s_k,t_k}(\{\xi \in\T: |\xi-1| \ge \epsilon\}) \ge \alpha, \qquad k\in \N.  
\] 
By passing to a subsequence if necessary, we may assume that $s_k \to u$ and $t_k \to u$ for some $u\in[0,T]$. This would imply by Proposition \ref{prop:portmanteau_finite} that 
\[
\lambda_{u,u}(\{\xi \in\T: |\xi-1| \ge \epsilon\}) \ge \limsup_{k \to\infty}\lambda_{s_k,t_k}(\{\xi \in\T: |\xi-1| \ge \epsilon\}) \ge \alpha, 
\]
which is a contradiction to the fact $\lambda_{u,u}=\delta_1$.
 
The claim that $\lambda_{s,t}$ is $\boxtimes$-ID now follows from the free analogue of the Bawly and Khintchine theorem \cite[Theorem 1.1]{BB08} applied to the decomposition 
\[
\mu_{s,t} = \mu_{s, s+(t-s)/n}\boxtimes\mu_{s+(t-s)/n, s+2(t-s)/n}\boxtimes \mu_{s+2(t-s)/n, s+3(t-s)/n}\boxtimes \cdots \boxtimes \mu_{s+(n-1)(t-s)/n, t}
\]
 and that $\{\mu_{s+(k-1)(t-s)/n, s+k(t-s)/n}\}_{n\ge1, n \ge k \ge 1}$ is an infinitesimal triangular array thanks to \eqref{eq:IA}. 
 
 We are in a position to prove that each $\lambda_{s,t}$ has a non-zero mean $m(s,t)$. Suppose to the contrary that $m(s_0,t_0)=0$ for some $t_0 >s_0 \ge0$. Since $m(s_0,s_0)=1$, the minimum  
\[
t_1= \min\{t>s_0: m(s_0,t)=0\} \in (s_0,t_0] 
\]
exists. For $s_0 \le t < t_1$ we have $0=m(s_0,t_1)= m(s_0,t)m(t,t_1)$ and hence $m(t,t_1)=0$ since $m(s_0,t)\neq0$. By taking the limit $t\to t_1$ we obtain $1=m(t_1,t_1)=0$, a contradiction. 
\end{proof}
\begin{remark}
The same proof can be used to prove that for any continuous $\circledast$-CH $(\mu_{s,t})_{t\ge s\ge0 }$, each $\mu_{s,t}$ is $\circledast$-ID and has no idempotent factors (or  has non-zero mean), which is stated in \cite[Remark 5.6.18]{Heyer}. 
\end{remark}

To the authors' knowledge the following is not explicitly written in the literature, but can be easily guessed from a similar result for free additive convolution  \cite[Theorem 3.8]{BNT02}. 

\begin{lemma}\label{lem:free2} Let $\mu_,\mu_n, n=1,2,3,\dots$ be $\boxtimes$-ID distributions with non-zero mean and let $(\alpha,\sigma),(\alpha_n,\sigma_n)$ be their generating pairs respectively. Then $\mu_n$ converges to $\mu$ weakly as $n\to \infty$ if and only if $e^{i\alpha_n} \to e^{i\alpha}$ and $\sigma_n \to \sigma$ weakly as $n\to\infty.$
\end{lemma}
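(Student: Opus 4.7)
The plan is to pass between convergence of measures and convergence of their $\Sigma$-transforms via the Herglotz representation, treating the pair $(\alpha,\sigma)$ as a pair (modular real number, positive finite measure) attached to the Herglotz function $F_\mu(z) := \log \Sigma_\mu(z) + i\alpha = \int_\T \frac{1+\xi z}{1-\xi z}\,\sigma(d\xi)$.

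For the direction $(\Rightarrow)$, assume $\mu_n \to \mu$ weakly. By \cite[Lemma 2.11]{FHS18} this is equivalent to locally uniform convergence $\eta_{\mu_n} \to \eta_\mu$ on $\D$. Since $\eta_\mu'(0)$ equals the mean of $\mu$ and is nonzero by hypothesis, $\eta_\mu$ is univalent in a neighborhood $U$ of $0$; by Hurwitz's theorem the same holds for $\eta_{\mu_n}$ eventually on a slightly smaller disk, and a standard inverse-function argument (or Rouché) yields $\eta_{\mu_n}^{-1} \to \eta_\mu^{-1}$ locally uniformly near $0$. Dividing by $z$, we obtain locally uniform convergence $\Sigma_{\mu_n} \to \Sigma_\mu$ on some neighborhood $V$ of $0$. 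In particular, $|\Sigma_{\mu_n}(0)| = e^{\sigma_n(\T)} \to |\Sigma_\mu(0)| = e^{\sigma(\T)}$, so $\sigma_n(\T)$ is bounded, and $e^{-i\alpha_n} = \Sigma_{\mu_n}(0)/|\Sigma_{\mu_n}(0)| \to e^{-i\alpha}$.

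Next, extend convergence from $V$ to all of $\D$. Consider $F_n(z) = \int_\T \frac{1+\xi z}{1-\xi z}\,\sigma_n(d\xi)$. Since $|(1+\xi z)/(1-\xi z)| \le (1+|z|)/(1-|z|)$, we have $|F_n(z)| \le \sigma_n(\T)(1+|z|)/(1-|z|)$, so $\{F_n\}$ is locally uniformly bounded on $\D$, hence a normal family by Montel's theorem. Take any subsequence $F_{n_k}$ converging locally uniformly on $\D$ to some holomorphic $G$; extracting further if necessary we may also assume $\alpha_{n_k} \to \alpha^\ast$ with $e^{i\alpha^\ast} = e^{i\alpha}$. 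Then $\Sigma_{\mu_{n_k}} = e^{-i\alpha_{n_k}}e^{F_{n_k}} \to e^{-i\alpha^\ast} e^G$ locally uniformly on $\D$, and this limit equals $\Sigma_\mu = e^{-i\alpha}e^F$ on $V$. By analytic continuation the identity holds on $\D$, and matching values at $0$ (which are real after subtracting the constants $-i\alpha^\ast$ and $-i\alpha$) forces $G = F$ on $\D$. Therefore every subsequence of $\{F_n\}$ has a further subsequence converging to the same limit $F$, so $F_n \to F$ locally uniformly on $\D$. By the Herglotz--Stieltjes inversion formula this is equivalent to $\sigma_n \to \sigma$ weakly, completing the forward direction.

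For the direction $(\Leftarrow)$, assume $e^{i\alpha_n} \to e^{i\alpha}$ and $\sigma_n \to \sigma$ weakly. Standard properties of the Herglotz transform show $F_n \to F$ locally uniformly on $\D$; combined with $e^{-i\alpha_n}\to e^{-i\alpha}$ this yields $\Sigma_{\mu_n} \to \Sigma_\mu$ locally uniformly on $\D$. Restricting to a small neighborhood of $0$ and inverting (again by Hurwitz plus inverse function arguments around $\eta_\mu^{-1}$), we obtain $\eta_{\mu_n} \to \eta_\mu$ on some neighborhood of $0$. Because each $\eta_{\mu_n}$ is a self-map of $\D$ fixing $0$, the family $\{\eta_{\mu_n}\}$ is normal on all of $\D$, and by Vitali's theorem the convergence extends to locally uniform convergence on $\D$. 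Invoking \cite[Lemma 2.11]{FHS18} once more yields $\mu_n \to \mu$ weakly.

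The main obstacle in this argument is the branch ambiguity: $\alpha$ is only defined modulo $2\pi\Z$, and one must ensure that the logarithms of $\Sigma_{\mu_n}$ can be chosen consistently so that the Herglotz functions $F_n$ really converge to $F$ (not to a translate by $2\pi i k$). This is resolved above by pinning the ambiguity using the real numbers $F_n(0) = \sigma_n(\T)$ and the hypothesis $e^{i\alpha_n}\to e^{i\alpha}$; the rest of the proof is routine normal-family and Herglotz-representation bookkeeping.
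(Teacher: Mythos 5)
Your proof is correct, but it takes a somewhat different route from the paper's. The paper dispatches both directions quickly by citing the known continuity of the $\Sigma$-transform with respect to weak convergence \cite[Proposition 2.9]{BV92}: for $(\Leftarrow)$ it goes weak convergence of $\sigma_n$ $\Rightarrow$ locally uniform convergence of $\Sigma_{\mu_n}$ $\Rightarrow$ $\mu_n\to\mu$; for $(\Rightarrow)$ it gets $\Sigma_{\mu_n}\to\Sigma_\mu$ near $0$ from that citation, reads off $e^{i\alpha_n}\to e^{i\alpha}$ and boundedness of $\sigma_n(\T)$ from $\Sigma_{\mu_n}(0)=e^{-i\alpha_n}e^{\sigma_n(\T)}$, and then runs the compactness argument at the level of the measures: Helly/Prohorov gives a subsequential weak limit $\sigma'$, one forms the $\boxtimes$-ID law $\mu'$ with pair $(\alpha,\sigma')$, and the uniqueness of the generating pair identifies $\sigma'=\sigma$. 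You instead reprove the needed $\Sigma$-transform continuity from scratch (via \cite[Lemma 2.11]{FHS18}, Rouch\'e/inverse-function arguments around $\eta_\mu^{-1}$, and Vitali for the extension from a neighborhood of $0$ to $\D$), and you run the compactness step at the level of the Herglotz functions $F_n$ (Montel) rather than the measures (Prohorov), identifying the limit through the uniqueness of the Herglotz representation and an explicit resolution of the $2\pi i$ branch ambiguity using $F_n(0)=\sigma_n(\T)\in\R$ — where the paper absorbs that ambiguity into the uniqueness of the generating pair. The trade-off: the paper's proof is shorter by leaning on \cite{BV92}, while yours is more self-contained, needing only the $\eta$-transform homeomorphism and classical complex analysis. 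Two small points you gloss over but which are routine: identifying $\eta_{\mu_n}$ with the local inverse of $z\Sigma_{\mu_n}(z)$ on a common neighborhood is cleanest via the identity theorem (they agree as power series at $0$), and the final step ``$F_n\to F$ locally uniformly implies $\sigma_n\to\sigma$ weakly'' uses that tightness is automatic on the compact space $\T$, that $\sigma_n(\T)=F_n(0)\to F(0)=\sigma(\T)$, and the uniqueness of the Herglotz representation, rather than a literal inversion formula.
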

\begin{proof} Suppose that $e^{i\alpha_n} \to e^{i\alpha}$ and $\sigma_n \to \sigma$ weakly as $n\to\infty.$ Then $\Sigma_{\mu_n}(z)$ converges uniformly to $\Sigma_\mu$ locally uniformly on $\D$ by applying \cite[Lemma 2.11]{FHS18}.  The convergence $\mu_n\to\mu$ then follows from \cite[Proposition 2.9]{BV92}.   

Suppose that $\mu_n$ converges to $\mu$ weakly. Again by \cite[Proposition 2.9]{BV92} the function $\Sigma_{\mu_n}$ converges to $\Sigma_\mu$ uniformly in a neighborhood of 0. Since $\Sigma_{\mu_n}(0)= e^{-i\alpha_n}e^{\sigma_n(\T)}$, the sequence $e^{i\alpha_n}$ converges to $e^{i\alpha}$ and the sequence $\sigma_n(\T)$ is bounded. By the latter fact and Helly's theorem (or Prohorov's theorem), there exists a subsequence of $\sigma_n$,  denoted by $\sigma_{n(k)}$, that converges weakly to a finite non-negative measure $\sigma'$ on $\T$. For this subsequence $\Sigma_{\mu_{n(k)}}(z)$ converges to $\Sigma_{\mu'}(z)$ on $\D$, where $\mu'$ is the $\boxtimes$-ID distribution characterized by the generating pair $(\alpha,\sigma')$. Since $\Sigma_\mu= \Sigma_{\mu'}$ in a neighborhood of $0$, we must have $\mu= \mu'$ and hence $\sigma=\sigma'$ by the uniqueness of the generating pair. Those arguments show that $\sigma_n$ converges to $\sigma$ weakly. 
\end{proof}

\begin{proof}[Proof of Theorem \ref{thm:FLK}] By Lemma \ref{lem:free1} for each $t\ge s \ge0$ the measure $\lambda_{s,t}$ is $\boxtimes$-ID and is not the normalized Haar measure, and hence $\lambda_{s,t}$ has a generating pair $(\alpha_{s,t},\sigma_{s,t})$. The formula \eqref{eq:FLKT} holds for $\alpha_t=\alpha_{0,t}$ and $\sigma_t=\sigma_{0,t}$. Since $\lambda_{s,s}=\delta_1$ we obtain $e^{i\alpha_{s,s}}=1$ and $\sigma_{s,s}=0$. By Lemma \ref{lem:free2}, the mapping $(s,t)\mapsto \sigma_{s,t}$, and in particular the mapping $t\mapsto \sigma_t$, is weakly continuous, and we can take $\alpha_t$ such that $\alpha_0=0$ and the mapping $t\mapsto \alpha_t$ is continuous. The relation $\sigma_{s} + \sigma_{s,t}=\sigma_{t}$ holds because of the hemigroup property $\lambda_{0,s} \boxtimes \lambda_{s,t}=\lambda_{0,t}$ and the uniqueness of $\sigma_t$.  Therefore, for every Borel set $B\in \cB(\T)$ we have $\sigma_s(B) \leq \sigma_t(B)$, and since $\sigma_{s,s}(\partial B)=0$ we conclude by weak continuity that $\lim_{t\downarrow s}\sigma_{s,t}(B)=0$ and so $\sigma_t(B)\to \sigma_s(B)$ as $t\downarrow s$. A similar argument shows that $\sigma_t(B)\to \sigma_s(B)$ as $t\uparrow s$. In conclusion, the properties \ref{LK1} and \ref{LK2} are fulfilled. Uniqueness of the family $(\alpha_t,\sigma_t)_{t\ge0}$ easily follows from the uniqueness of $e^{i\alpha_t}$ and $\sigma_t$ for each $t\ge0$ and \ref{LK1}. 

Conversely, given a generating family $(\alpha_t,\sigma_t)_{t\ge0}$, for every $t \ge s \ge 0$ there exists a $\boxtimes$-ID distribution $\lambda_{s,t}$ whose generating pair is $(\alpha_t-\alpha_s, \sigma_t-\sigma_s)$. Those measures form a $\boxtimes$-CH. Continuity of the hemigroup follows by Lemma \ref{lem:free2}. 
\end{proof}

\begin{definition}\label{def:c-f} A bijection 
\[\Theta_F\colon\{\text{continuous $\circledast$-CHs on $\T$}\} \to \{\text{continuous $\boxtimes$-CHs on $\T$}\}
\]
is defined by sending the continuous $\circledast$-CH characterized by a generating family to the continuous $\boxtimes$-CH having the same generating family. 
\end{definition}

\begin{example} 
Then $\circledast$-convolution hemigroup $(N_\T(0,t-s))_{t\ge s \ge0}$ is characterized by the generating family $(0,(t/2)\delta_1)$ and is mapped by $\Theta_F$ to a $\boxtimes$-CH $(\nu_{t-s})_{t\ge s \ge0}$, where $\nu_t$ is the free normal distribution with mean $e^{-t}$ introduced and investigated in \cite{Bia97b,Bia97c}. 
\end{example}

Now we characterize the convergence of $\boxtimes$-CHs by means of convergence of generating families. This will add another object to Fig.\ \ref{fig:star} that is homeomorphic to the other objects. 

 \begin{proposition}\label{prop:conv_free}
Let $(\lambda_{s,t})_{t\ge s \ge 0}, (\lambda_{s,t}^{(n)})_{t\ge s \ge 0}$, $n=1,2,3,\dots$ be continuous $\boxtimes$-CHs, $(\eta_{s,t})_{t\ge s \ge 0}$, $(\eta_{s,t}^{(n)})_{t\ge s \ge 0}$ be their $\eta$-transforms, and $(\alpha_t,\sigma_t)_{t\ge0}$,  $(\alpha_t^{(n)},\sigma_t^{(n)})_{t\ge0}$ be their generating families, respectively. The following conditions are equivalent as $n\to\infty$: 
\begin{enumerate}[label=\rm(F\arabic*)]
\item\label{condF4} $(\eta_{0,t}^{(n)})_{t\ge0}$ converges to $(\eta_{0,t})_{t\ge0}$ locally uniformly on $\D \times [0,\infty)$; 

\item\label{condF5} $(\eta_{s,t}^{(n)})_{t\ge s \ge 0}$ converges to $(\eta_{s,t})_{t\ge s \ge 0}$ locally uniformly on $\D \times \{(s,t): 0 \le s \le t <\infty\}$; 

\item\label{condF1} $(\lambda_{0,t}^{(n)})_{t\ge0}$ weakly converges to $(\lambda_{0,t})_{t\ge0}$ locally uniformly on $[0,\infty)$; 

\item\label{condF2} $(\lambda_{s,t}^{(n)})_{t\ge s \ge 0}$ weakly converges to $(\lambda_{s,t})_{t\ge s \ge 0}$ locally uniformly on the index set $\{(s,t): 0 \le s \le t <\infty\}$;

\item \label{condF3} $(\alpha_t^{(n)})_{t\ge0}$ converges to $(\alpha_t)_{t\ge0}$ locally uniformly on $[0,\infty)$ and the mutually equivalent conditions \ref{condG1}--\ref{condG4} in Proposition \ref{prop:convergence_equivalence} hold. 
 \end{enumerate} 
\end{proposition}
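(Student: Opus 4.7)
The plan is to mirror the proof of Theorem \ref{thm:conv_classical}, substituting the free L\'evy--Khintchine representation (Theorem \ref{thm:FLK}) for the classical one and relying on the fact that the map $\mu \mapsto \eta_\mu$ is a homeomorphism between probability measures on $\T$ (weak topology) and holomorphic self-maps of $\D$ fixing $0$ (locally uniform topology), per \cite[Lemma 2.11]{FHS18}. First I would establish the equivalences \ref{condF1} $\Leftrightarrow$ \ref{condF3} and \ref{condF2} $\Leftrightarrow$ \ref{condF4} from this homeomorphism, extended to families via an equicontinuity argument analogous to \ref{condM3} $\Rightarrow$ \ref{condM1} in Theorem \ref{thm:conv_monotone}. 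The restrictions \ref{condF2} $\Rightarrow$ \ref{condF1} and \ref{condF4} $\Rightarrow$ \ref{condF3} being trivial, it remains to show the equivalence of \ref{condF3} with any of the others.

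For \ref{condF3} $\Rightarrow$ \ref{condF2}, substitute into the free L\'evy--Khintchine increment formula \eqref{eq:increments_f}. Condition \ref{condF3} yields locally uniform convergence of $\alpha_{s,t}^{(n)} = \alpha_t^{(n)} - \alpha_s^{(n)}$ and locally uniform weak convergence of $\sigma_{s,t}^{(n)} = \sigma_t^{(n)} - \sigma_s^{(n)}$ in $(s,t)$ (by Proposition \ref{prop:convergence_equivalence}). Since $\xi \mapsto (1+\xi z)/(1-\xi z)$ is bounded and continuous for each fixed $z \in \D$, this gives locally uniform convergence of $\Sigma_{\lambda_{s,t}^{(n)}}$ to $\Sigma_{\lambda_{s,t}}$ on $\D \times \{(s,t) : 0 \le s \le t < \infty\}$. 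By \cite[Proposition 2.9]{BV92} this yields weak convergence $\lambda_{s,t}^{(n)} \to \lambda_{s,t}$, and the uniformity in $(s,t)$ transfers via Vitali's theorem and functional inversion of $z \mapsto z\Sigma_{\lambda_{s,t}^{(n)}}(z)$ near $0$ to give locally uniform convergence of $\eta_{s,t}^{(n)}$, hence \ref{condF2}.

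The main direction is \ref{condF1} $\Rightarrow$ \ref{condF3}, which I would prove in parallel with \ref{condC1} $\Rightarrow$ \ref{condC3} of Theorem \ref{thm:conv_classical}. By Lemma \ref{lem:free1}, every $\lambda_{0,t}^{(n)}$ and $\lambda_{0,t}$ is $\boxtimes$-ID with nonzero mean and so has a generating pair. From \ref{condF1}, \cite[Proposition 2.9]{BV92} combined with a uniform-in-$t$ equicontinuity argument gives locally uniform convergence of $\Sigma_{\lambda_{0,t}^{(n)}}$ to $\Sigma_{\lambda_{0,t}}$ on a neighborhood of $0$, uniformly in $t$ on compact intervals. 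Evaluating at $z=0$ with
\[
\Sigma_{\lambda_{0,t}^{(n)}}(0) = \exp\bigl(-i\alpha_t^{(n)} + \sigma_t^{(n)}(\T)\bigr),
\]
and taking moduli and arguments (using continuity of $\alpha_t^{(n)}$ together with $\alpha_0^{(n)}=0$), I obtain $\sigma_t^{(n)}(\T) \to \sigma_t(\T)$ and $\alpha_t^{(n)} \to \alpha_t$ uniformly on $[0,T]$. To promote this to weak convergence of the measures $\sigma_t^{(n)}$ themselves: the associated measures $\Sigma^{(n)}|_{\T \times [0,T]}$ are uniformly bounded, so by Prohorov's theorem (Theorem \ref{thm:Prohorov}) some subsequence converges weakly to a limit $\tilde\Sigma$; set $\tilde\sigma_t = \tilde\Sigma(\cdot \times [0,t])$. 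Passing to the limit in \eqref{eq:FLKT} and invoking the uniqueness in Theorem \ref{thm:FLK} forces $\tilde\sigma_t=\sigma_t$. Hence the whole sequence converges, yielding condition \ref{condG4}, and thus \ref{condF3}.

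The main obstacle is this last step: promoting the pointwise-in-$t$ weak convergence of $\sigma_t^{(n)}$ (which one gets readily from Lemma \ref{lem:free2}) to locally uniform weak convergence through the Prohorov-plus-uniqueness argument. A secondary technical point is that $\Sigma$-transforms are a priori defined only in a neighborhood of $0$, so the analytic extension to $\D$ provided by $\boxtimes$-infinite divisibility (Lemma \ref{lem:free1}) is essential in order to apply \cite[Proposition 2.9]{BV92} and to extract information from the values at $z=0$.
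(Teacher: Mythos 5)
Your proposal is correct in outline and follows the paper's global skeleton (establish the equivalence of the measure-level conditions \ref{condF1}, \ref{condF2} with the generator condition \ref{condF3}, then transfer to the $\eta$-transform conditions by the same arguments as \ref{condM3} $\Rightarrow$ \ref{condM1} and \ref{condM2} $\Rightarrow$ \ref{condM4} in Theorem \ref{thm:conv_monotone}), but the two central implications are handled by a different, and heavier, route than the paper's. For \ref{condF1} $\Rightarrow$ \ref{condF3} the paper's argument is two lines: Lemma \ref{lem:free2} applied at each fixed $t$ already yields $\sigma_t^{(n)}\to\sigma_t$ weakly, i.e.\ condition \ref{condG2}, and Proposition \ref{prop:convergence_equivalence} (applicable since both families satisfy \ref{LK2} by Theorem \ref{thm:FLK}) upgrades this to \ref{condG1}--\ref{condG4} with no further work, while the locally uniform convergence $\alpha_t^{(n)}\to\alpha_t$ comes directly from $e^{i\alpha_t^{(n)}}=\int_\T\xi\,d\lambda_{0,t}^{(n)}(\xi)\big/\bigl|\int_\T\xi\,d\lambda_{0,t}^{(n)}(\xi)\bigr|$ and the hypothesis. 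Thus the Prohorov-plus-uniqueness step you single out as ``the main obstacle'' is a non-obstacle: it re-imports the machinery of Theorem \ref{thm:conv_classical}, which is needed there precisely because the classical case has no analogue of Lemma \ref{lem:free2}, whereas in the free case that lemma is exactly what makes this direction easy (your redundant argument would still work if executed as in the classical proof, so this is inefficiency rather than error). Similarly, your appeal to \cite[Proposition 2.9]{BV92} plus a ``uniform-in-$t$ equicontinuity argument'' is both vague and dispensable: you only use the $\Sigma$-transform at $z=0$, i.e.\ the reciprocal mean, whose uniform convergence on compact time intervals is immediate from \ref{condF1} together with the non-vanishing of the limit mean (Lemma \ref{lem:free1}). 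For \ref{condF3} $\Rightarrow$ \ref{condF2} the paper avoids inverting $z\Sigma_{\lambda_{s,t}^{(n)}}(z)$ altogether: it writes the moments $m_k(\lambda_{s,t})$ as universal polynomials in $e^{i\alpha_{s,t}-\sigma_{s,t}(\T)}$ and the moments of $\sigma_{s,t}$, obtains locally uniform convergence of moments directly from the generating families, and concludes by Stone--Weierstrass; your route through locally uniform convergence of $\Sigma$-transforms, inversion near $0$ and Vitali is viable, but to pass from $\eta$-convergence to weak convergence uniformly in $(s,t)$ it must end with essentially the same Cauchy-formula/Stone--Weierstrass step, so it gains nothing. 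Finally, check the condition pairings in your opening paragraph: the homeomorphism $\mu\mapsto\eta_\mu$ matches \ref{condF1} with \ref{condF4} and \ref{condF2} with \ref{condF5}, and the trivial restrictions are \ref{condF2} $\Rightarrow$ \ref{condF1} and \ref{condF5} $\Rightarrow$ \ref{condF4}; some of the pairings as you wrote them do not say this, though the body of your argument uses the conditions correctly.
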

\begin{proof} We will establish the equivalence of \ref{condF1}, \ref{condF2} and \ref{condF3}. Once we establish it, then the proof of \ref{condF2} $\Rightarrow$ \ref{condF5} $\Rightarrow$ \ref{condF4} $\Rightarrow$ \ref{condF1} will be almost the same as the arguments \ref{condM3} $\Rightarrow$ \ref{condM1} and \ref{condM2} $\Rightarrow$ \ref{condM4} in Theorem \ref{thm:conv_monotone}. 

\vspace{2mm}\noindent\underline{\ref{condF2} $\Rightarrow$ \ref{condF1}}:  obvious. 

\vspace{2mm}\noindent\underline{\ref{condF1} $\Rightarrow$ \ref{condF3}.}  Because $\lambda_{0,t}^{(n)}$ weakly converges to $\lambda_{0,t}$ for each $t\ge0$, condition \ref{condG2} follows by Lemma \ref{lem:free2}. Since 
\[
e^{i\alpha_t^{(n)}} = \frac{\int_\T \xi \,d\lambda_{0,t}^{(n)}(\xi)}{\left| \int_\T \xi \,d\lambda_{0,t}^{(n)}(\xi)\right|} \qquad \text{and} \qquad e^{i\alpha_t} =  \frac{\int_\T \xi \,d\lambda_{0,t}(\xi)}{\left| \int_\T \xi \,d\lambda_{0,t}(\xi)\right|}, 
\]
 the locally uniform convergence of $\exp[i\alpha_t^{(n)}]$ to $\exp[i\alpha_t]$ holds, and hence of $\alpha_t^{(n)}$ to $\alpha_t$ holds. 

\vspace{2mm}\noindent\underline{\ref{condF3} $\Rightarrow$ \ref{condF2}.} Denote by $m_k(\sigma)$ the $k$-th moment $\int_{\T} \xi^k \,d\sigma$ of a finite measure $\sigma$. The assumption implies that $\alpha_{s,t}^{(n)}$ converges to $\alpha_{s,t}$ locally uniformly and $m_k(\sigma_{s,t}^{(n)})$ converges to $m_k(\sigma_{s,t})$ locally uniformly for every $k \in \N$, where $\alpha_{s,t}=\alpha_t-\alpha_s$, $\sigma_{s,t}=\sigma_t-\sigma_s$ and $\alpha_{s,t}^{(n)},\sigma_{s,t}^{(n)}$ are similarly defined. By examining the relations between $\Sigma_\lambda, \eta_\lambda$ and $\psi_\lambda$, one sees that for each $k \in \N$ there is a universal polynomial $P_k(x_1,x_2,\dots, x_k)$ independent of $\lambda$ such that 
\[
m_k(\lambda)= P_k (e^{i \alpha- \sigma(\T)}, m_1(\sigma), \dots, m_{k-1}(\sigma))  
\]
for every $\boxtimes$-ID distribution $\lambda$ with a generating pair $(\alpha,\sigma)$; for example 
\[
m_1(\lambda) = e^{i \alpha- \sigma(\T)}, \qquad m_2(\lambda) =  (e^{i \alpha- \sigma(\T)})^2(1-2 m_1(\sigma)). 
\]
Hence $m_k(\lambda_{s,t}^{(n)})$ converges to $m_k(\lambda_{s,t})$ locally uniformly on $(s,t)$ for each $k \ge0$ and hence for $k \in \Z$ by complex conjugate. The use of Stone-Weierstrass' theorem gives the desired conclusion. 
\end{proof}

\begin{corollary} The bijection $\Theta_F$ is a homeomorphism with respect to the locally uniform weak convergence of convolution hemigroups. 
\end{corollary}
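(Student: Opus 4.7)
The plan is to chain together the two convergence characterizations already established, using the fact that by construction $\Theta_F$ sends a continuous $\circledast$-CH to the continuous $\boxtimes$-CH with the \emph{same} generating family $(\alpha_t,\sigma_t)_{t\ge0}$. Thus it suffices to observe that ``$\Theta_F$ is continuous'' in either direction amounts to transferring a statement about locally uniform weak convergence of the hemigroups through the common parameter set.

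Concretely, let $(\mu_{s,t}^{(n)})$, $(\mu_{s,t})$ be continuous $\circledast$-CHs with generating families $(\alpha_t^{(n)},\sigma_t^{(n)})$, $(\alpha_t,\sigma_t)$, and set $\lambda^{(n)}_{s,t} = \Theta_F(\mu^{(n)}_{s,\cdot})_{s,t}$ and $\lambda_{s,t}= \Theta_F(\mu_{s,\cdot})_{s,t}$, so that by Definition \ref{def:c-f} both sides share the same generating families. Applying Theorem \ref{thm:conv_classical}, the locally uniform weak convergence $\mu^{(n)}_{s,t}\to \mu_{s,t}$ (either \ref{condC1} or \ref{condC2}) is equivalent to condition \ref{condC3}, namely the locally uniform convergence of $\alpha^{(n)}_t\to \alpha_t$ together with the equivalent conditions \ref{condG1}--\ref{condG4} on $(\sigma^{(n)}_t)$. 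By Proposition \ref{prop:conv_free}, the same condition on the generating families is in turn equivalent to the locally uniform weak convergence $\lambda^{(n)}_{s,t}\to \lambda_{s,t}$ (condition \ref{condF1} or \ref{condF2}). Therefore the two hemigroup convergences are equivalent, which establishes that $\Theta_F$ and its inverse are both sequentially continuous with respect to the topology in question.

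Since the topology of locally uniform weak convergence on each of the two spaces of hemigroups is metrizable (one may for instance use countably many bounded Borel sets $B\subset \T$ and a countable exhausting family of time-intervals $[0,T_k]$, or equivalently fix a dense countable family of bounded continuous test functions on $\T$ and on $[0,\infty)$), sequential continuity coincides with continuity. Hence $\Theta_F$ is a homeomorphism.

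The only potential obstacle is a pedantic one: verifying that sequential continuity suffices in our setting. This can be sidestepped either by writing out the metric explicitly via countably many seminorms of the form $\sup_{t\in[0,T_k]}\bigl|\int f_j\,d\mu_{0,t}^{(n)}-\int f_j\,d\mu_{0,t}\bigr|$ for a dense sequence $\{f_j\}\subset C(\T)$, or by noting that both Theorem \ref{thm:conv_classical} and Proposition \ref{prop:conv_free} give equivalences stated in terms of the \emph{same} intermediate condition on $(\alpha_t,\sigma_t)$, so that the argument goes through verbatim for nets as well as for sequences. No further new ideas are needed beyond this bookkeeping.
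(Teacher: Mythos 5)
Your proposal is correct and follows essentially the same route as the paper: the corollary is obtained by combining Theorem \ref{thm:conv_classical} and Proposition \ref{prop:conv_free}, both of which characterize the respective hemigroup convergences by the identical condition on the shared generating family $(\alpha_t,\sigma_t)_{t\ge0}$, namely locally uniform convergence of $\alpha_t^{(n)}$ together with \ref{condG1}--\ref{condG4}. Your additional remark on metrizability versus sequential continuity is harmless bookkeeping that the paper leaves implicit.
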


\subsection{Interpretation of the bijection: generator of moments} \label{sec:interpretation}

This section exhibits an interpretation of the bijection in Definition \ref{def:c-f} in terms moments. Suppose that $(\mu_{s,t})_{t\ge s \ge 0}$ is a continuous $\circledast$-CH as in Section \ref{sec:monotone} such that its generating family $(\alpha_t,\sigma_t)_{t\ge0}$ is given as  \eqref{eq:gen}. 
Let $(\lambda_{s,t})_{t\ge s \ge 0}= \Theta_F((\mu_{s,t})_{t\ge s \ge 0})$; namely, it is a continuous $\boxtimes$-CH characterized by the same generating family. Let $\eta_{s,t}$ be the $\eta$-transform of $\lambda_{s,t}$. Since $\eta_{s,t}^{-1}(z) = z\Sigma_{\lambda_{s,t}}(z)$ we have by \eqref{eq:increments_f} 
\[
\partial_t \eta_{s,t}^{-1}(z)|_{t=s} = z \left[-i \gamma_s + \int_\T \frac{1+\xi z}{1- \xi z} d\rho_s(\xi) \right]. 
\]
Taking the derivative of the identity $\eta_{s,t}^{-1} (\eta_{s,t}(z))=z$ gives $\partial_t \eta_{s,t}(z)|_{t=s}= - \partial_t \eta_{s,t}^{-1}(z)|_{t=s} $. Switching to the moment generating function $\psi_{s,t}(z)=\eta_{s,t}(z)/(1-\eta_{s,t}(z))$ we obtain 
\[
\partial_t \psi_{\lambda_{s,t}}(z)|_{t=s} = \frac{z}{(1-z)^2} \left[i \gamma_s - \int_\T \frac{1+\xi z}{1- \xi z} d\rho_s(\xi) \right]. 
\]
Following the corresponding computations in Section \ref{sec:monotone}  we arrive at 
\begin{equation*}
 \left. \frac{\partial}{\partial t}\right|_{t=s} \int_{\T} \xi^n \,d\lambda_{s,t}(\xi)=  \left. \frac{\partial}{\partial t}\right|_{t=s} \int_\T \xi^n \,d\mu_{s,t}(\xi), \qquad n \in \Z. 
\end{equation*}
Therefore our bijection between convolution hemigroups is defined so that the ``generators'' of moments coincide.

\subsection{Boolean convolution hemigroups on the unit circle} 
The results in Section \ref{sec:c-f2}--\ref{sec:interpretation} have analogy for boolean convolution. 
The \emph{multiplicative boolean convolution} $\mu \utimes \nu$ of probability measures on the unit circle was defined in \cite{Fra09} and was characterized by 
$$
z\eta_{\mu \sutimes \nu}(z) = \eta_\mu(z) \eta_\nu(z), \qquad z \in \D.   
$$
 If $\mu$ is $\utimes$-ID with $\int_\T \xi \,d\mu(\xi)\neq0$ then the \emph{boolean L\'evy-Khintchine representation}  
\begin{equation}\label{eq:BID}
\eta_{\mu}(z) = z \exp \left( i \alpha - \int_\T \frac{1+\xi z}{1-\xi z}\,\sigma(d\xi)  \right)
\end{equation}
holds on $\D$, where $\alpha \in \R$ and $\sigma$ is a non-negative finite measure on $\T$. The measure $\sigma$ is unique and $\alpha$ is unique up to translations by $2\pi \Z$ as in the free case. Conversely, given  $\alpha \in \R$ and a non-negative finite measure $\sigma$ on $\R$, there exists a $\utimes$-ID distribution $\mu$ such that \eqref{eq:BID} holds.  The pair $(\alpha,\sigma)$ is called a generating pair of $\mu$. 

One can prove the following boolean analogue of Theorem \ref{thm:FLK}. 
\begin{theorem}[Boolean L\'evy-Khintchine representation for $\utimes$-CHs] \label{thm:BLK}
For a continuous $\utimes$-CH $(\zeta_{s,t})_{t\ge s\ge0 }$ on $\T$ there exists a unique family $(\alpha_t,\sigma_t)_{t\ge0}$ with \ref{LK1} and \ref{LK2} and 
\begin{equation}\label{eq:BLKT}
\eta_{\zeta_{0,t}}(z) = z\exp \left(i \alpha_t - \int_\T \frac{1+\xi z}{1-\xi z}\,\sigma_t(d\xi)  \right), \qquad z \in \D,~t\ge0.
\end{equation}
Conversely, given a family $(\alpha_t,\sigma_t)_{t\ge0}$ satisfying \ref{LK1} and \ref{LK2} there exists a unique continuous $\utimes$-CH $(\zeta_{s,t})_{t\ge s\ge0 }$ on $\T$ for which \eqref{eq:BLKT} holds. The family $(\alpha_t,\sigma_t)_{t\ge0}$ is called the \emph{generating family} for the corresponding continuous $\utimes$-CH. 
\end{theorem}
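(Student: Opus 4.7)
The proof should follow the same architecture as Theorem \ref{thm:FLK}, with boolean analogues of the two auxiliary lemmas. First I would establish that every $\zeta_{s,t}$ is $\utimes$-infinitely divisible with a non-zero mean. The non-vanishing mean argument is identical to the one in Lemma \ref{lem:free1}: use continuity of $m(s,t) := \int_\T \xi \, d\zeta_{s,t}(\xi)$, the relation $m(s_0,t_1) = m(s_0,t) m(t,t_1)$ coming from the hemigroup property, and derive a contradiction. For infinite divisibility, rather than invoking a boolean version of the Bawly--Khintchine theorem, one can exploit the multiplicative structure of the $\eta$-transform directly: writing $\eta_{\zeta_{s,t}}(z) = z \cdot h_{s,t}(z)$ with $h_{s,t}:\D \to \D$ holomorphic, the hemigroup identity $z\eta_{\zeta_{s,u}} = \eta_{\zeta_{s,t}}\eta_{\zeta_{t,u}}$ becomes $h_{s,u} = h_{s,t}\cdot h_{t,u}$. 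For any $n$, partition $[s,t]$ into $n$ equal subintervals $s=t_0<t_1<\cdots<t_n=t$; then $h_{s,t} = \prod_{k=1}^n h_{t_{k-1},t_k}$, and since the factors have no zero in $\D$ (they are close to $1$ for fine partitions by continuity of the CH and Schwarz), $h_{s,t}$ has no zero in $\D$. The function $z \exp\bigl(\frac{1}{n}\log h_{s,t}(z)\bigr)$ is then the $\eta$-transform of an $n$-th $\utimes$-root of $\zeta_{s,t}$, so $\zeta_{s,t}$ is $\utimes$-ID.

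Next I would prove the boolean analogue of Lemma \ref{lem:free2}: a sequence of $\utimes$-ID distributions $\mu_n$ with generating pairs $(\alpha_n,\sigma_n)$ converges weakly to $\mu$ with generating pair $(\alpha,\sigma)$ if and only if $e^{i\alpha_n}\to e^{i\alpha}$ and $\sigma_n\to\sigma$ weakly. One direction uses the fact that weak convergence of $\sigma_n$ and convergence of $e^{i\alpha_n}$ imply locally uniform convergence of the exponential representation \eqref{eq:BID}, hence of $\eta_{\mu_n}\to\eta_\mu$; by the homeomorphism \cite[Lemma 2.11]{FHS18}, $\mu_n\to\mu$ weakly. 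For the other direction, locally uniform convergence of $\eta_{\mu_n}$ gives convergence of $\eta_{\mu_n}(z)/z\to\eta_\mu(z)/z$; evaluating at $z=0$ yields $e^{i\alpha_n-\sigma_n(\T)}\to e^{i\alpha-\sigma(\T)}$, bounding $\sigma_n(\T)$, and after passing to a subsequential weak limit $\sigma'$ via Prohorov, uniqueness of the boolean generating pair forces $\sigma'=\sigma$ and $e^{i\alpha_n}\to e^{i\alpha}$.

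With these tools in hand, the theorem follows essentially by transcribing the proof of Theorem \ref{thm:FLK}: define $(\alpha_t,\sigma_t)$ as the generating pair of $\zeta_{0,t}$ (with $\alpha_0=0$ and $\alpha_t$ chosen continuous using the polar decomposition of $\int_\T\xi\,d\zeta_{0,t}(\xi)$). The hemigroup property and uniqueness of the generating pair give $\sigma_t = \sigma_s + \sigma_{s,t}$ and the analogous relation modulo $2\pi\Z$ for $\alpha$, hence monotonicity of $\sigma_t(B)$ for every Borel set $B$; continuity of $t\mapsto\sigma_t(B)$ follows from weak continuity of the CH and the boolean analogue of Lemma \ref{lem:free2}, exactly as in the proof of Theorem \ref{thm:FLK}. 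Conversely, given $(\alpha_t,\sigma_t)$ satisfying \ref{LK1}--\ref{LK2}, for each pair $s\le t$ use \eqref{eq:BID} with the pair $(\alpha_t-\alpha_s,\sigma_t-\sigma_s)$ to construct a $\utimes$-ID measure $\zeta_{s,t}$; the hemigroup property is immediate from the multiplicative form of the $\eta$-transform characterization of $\utimes$, and continuity is again supplied by the boolean analogue of Lemma \ref{lem:free2}.

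The main obstacle I expect is Step 1, the infinite divisibility of $\zeta_{s,t}$, since the boolean multiplicative literature is less developed than the free one; in particular there does not appear to be a direct citation for a Bawly--Khintchine type result for $\utimes$. The sketched direct argument through an $n$-th root of the $\eta$-transform should bypass this, but it requires one to check carefully that the resulting function $z\exp(\frac{1}{n}\log h_{s,t})$ is indeed the $\eta$-transform of a probability measure, i.e.\ lies in the image of the homeomorphism \eqref{eq:homeo}, which is automatic since it is a holomorphic self-map of $\D$ fixing $0$.
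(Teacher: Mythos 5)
Your proposal is correct, and its overall architecture is exactly the paper's: Theorem \ref{thm:BLK} is proved by transcribing the proof of Theorem \ref{thm:FLK}, using precisely the two auxiliary facts you formulate, namely that each $\zeta_{s,t}$ is $\utimes$-ID with non-zero mean, and the boolean analogue of Lemma \ref{lem:free2} (which the paper, like you, proves by replacing \cite[Proposition 2.9]{BV92} with \cite[Lemma 2.11]{FHS18}). The one genuine divergence is the infinite-divisibility step. The paper settles it by repeating the proof of Lemma \ref{lem:free1} verbatim, invoking a boolean Khintchine-type theorem, \cite[Theorem 3.4]{Wan08}, for the infinitesimal triangular array of increments; so, contrary to your expectation, a citable Bawly--Khintchine result for $\utimes$ does exist and is what the paper uses. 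You instead exploit the multiplicativity of $h_{s,t}(z)=\eta_{\zeta_{s,t}}(z)/z$ under $\utimes$ to build explicit $n$-th roots $z\exp\bigl(\tfrac1n\log h_{s,t}(z)\bigr)$, which is more elementary and self-contained; it in fact gives the zero-freeness of $h_{s,t}$ directly, from which \eqref{eq:BLKT} could even be read off by taking a logarithm and applying the Herglotz representation, bypassing the notion of $\utimes$-ID altogether, whereas the paper's route is shorter given the reference and parallels the free case exactly. One point in your argument should be stated more carefully: continuity of the CH (via the compactness argument behind \eqref{eq:IA} and \cite[Lemma 2.11]{FHS18}) makes the factors $h_{t_{k-1},t_k}$ close to $1$ only \emph{locally uniformly}, so a fine partition yields zero-freeness of all factors only on a prescribed compact $K\subset\D$, with the required fineness depending on $K$; since $h_{s,t}$ does not depend on the partition, you conclude that it is zero-free on every such $K$ and hence on $\D$ by exhaustion --- as written, your parenthetical suggests the factors themselves are zero-free on all of $\D$, which is not what the estimate gives, though the conclusion you need survives. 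With that wording fixed, and the (correct) observation that $|h_{s,t}|\le1$ by Schwarz so the candidate root is a self-map of $\D$ fixing $0$ and hence an $\eta$-transform via \eqref{eq:homeo}, your proof goes through.
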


The proof is quite similar to the free case, with the help of the following facts. 

\begin{lemma} Let $(\zeta_{s,t})_{t\ge s\ge0 }$ be a continuous $\utimes$-CH. For every $t \ge s\ge 0$ the measure $\zeta_{s,t}$ is $\utimes$-ID and has a non-zero mean. 
\end{lemma}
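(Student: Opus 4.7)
The plan is to deduce both conclusions simultaneously by proving that the holomorphic function $h_{s,t}(z) := \eta_{\zeta_{s,t}}(z)/z$ on $\D$ (extended across the removable singularity at the origin) is nowhere zero on $\D$. Once this is available, simple connectedness of $\D$ lets us take a holomorphic branch $H(z) := \log h_{s,t}(z)$; the Schwarz lemma $|h_{s,t}(z)|\le 1$ forces $\Re(-H)\ge 0$, so the Herglotz representation used in Proposition \ref{bi-measure} yields
$$-H(z) = i\beta + \int_\T \frac{1+\xi z}{1-\xi z}\,d\tau(\xi), \qquad z\in\D,$$
for some $\beta\in\R$ and finite non-negative measure $\tau$ on $\T$. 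Setting $(\alpha,\sigma)=(-\beta,\tau)$ places $\eta_{\zeta_{s,t}}$ into the boolean L\'evy-Khintchine shape \eqref{eq:BID}, so by the characterization of $\utimes$-ID distributions recalled just before Theorem \ref{thm:BLK}, $\zeta_{s,t}$ is $\utimes$-ID; moreover $h_{s,t}(0)=e^{i\alpha-\sigma(\T)}\ne 0$ delivers the non-vanishing of the mean as a free byproduct.

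For the non-vanishing of $h_{s,t}$, I would mimic the contradiction scheme from Lemma \ref{lem:free1}. Fix $z_0\in\D$. The defining identity $z\,\eta_{\mu\sutimes\nu}(z) = \eta_\mu(z)\eta_\nu(z)$ combined with the hemigroup law $\zeta_{s,u}=\zeta_{s,t}\utimes\zeta_{t,u}$ gives
$$z_0\,\eta_{\zeta_{s,u}}(z_0) = \eta_{\zeta_{s,t}}(z_0)\,\eta_{\zeta_{t,u}}(z_0),\qquad 0\le s\le t\le u,$$
and comparing coefficients of $z^1$ yields the purely scalar multiplicative law $m(s,u)=m(s,t)\,m(t,u)$ for the means $m(s,t):=\int_\T\xi\,d\zeta_{s,t}$. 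By \cite[Lemma 2.11]{FHS18} the $\eta$-transform is a homeomorphism with respect to weak and locally uniform convergence, so $t\mapsto \eta_{\zeta_{s,t}}(z_0)$ is continuous with value $z_0$ at $t=s$. If this map vanished past $s$, the infimum $t_1:=\inf\{t>s:\eta_{\zeta_{s,t}}(z_0)=0\}$ would lie in $(s,\infty)$ and would itself be a zero; the displayed identity with $u=t_1$ would then force $\eta_{\zeta_{t,t_1}}(z_0)=0$ for every $s\le t<t_1$, and letting $t\uparrow t_1$ would yield $z_0=\eta_{\zeta_{t_1,t_1}}(z_0)=0$, contradicting $z_0\ne 0$. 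The case $z_0=0$ is handled identically using the scalar relation and the contradiction $1=m(t_1,t_1)=0$.

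The step I expect to be the main obstacle is precisely the claim that $h_{s,t}$ is non-vanishing on \emph{all} of $\D$. A direct local approach based on short time increments, weak continuity, and Hurwitz's theorem would only deliver non-vanishing on compact subsets of $\D$, which is inadequate for taking a logarithm and invoking the Herglotz representation globally. The contradiction argument above sidesteps this difficulty by turning the hemigroup law into a global-in-time statement that rules out zeros pointwise in $z_0$, without ever needing to control the behavior of $h_{s,t}$ near $\partial\D$.
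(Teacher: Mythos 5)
Your proof is correct, but it follows a genuinely different route from the paper's. The paper simply transplants the proof of Lemma \ref{lem:free1}: it first establishes the infinitesimality estimate \eqref{eq:IA} for the array of increments by a weak-continuity/compactness contradiction, then invokes the boolean Khintchine-type limit theorem \cite[Theorem 3.4]{Wan08} (in place of \cite[Theorem 1.1]{BB08}) to get $\utimes$-infinite divisibility, and finally proves the non-vanishing of the mean by the ``first zero time'' argument. You instead exploit a feature special to the boolean setting: the identity $z\,\eta_{\zeta_{s,u}}(z)=\eta_{\zeta_{s,t}}(z)\,\eta_{\zeta_{t,u}}(z)$ holds pointwise on all of $\D$, so the paper's first-zero-time argument for the mean can be run at every fixed $z_0\in\D$ (your $z_0=0$ case \emph{is} exactly the paper's mean argument), yielding that $\eta_{\zeta_{s,t}}(z)/z$ is zero-free on $\D$; the Schwarz lemma and the Herglotz representation then put $\eta_{\zeta_{s,t}}$ directly into the form \eqref{eq:BID}, and infinite divisibility follows from the converse part of the boolean L\'evy--Khintchine characterization together with the injectivity of $\mu\mapsto\eta_\mu$. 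What your route buys is self-containedness: no infinitesimal triangular array and no appeal to Wang's limit theorem, essentially producing the generating pair of $\zeta_{s,t}$ on the spot (which in fact anticipates part of the proof of Theorem \ref{thm:BLK}). What the paper's route buys is uniformity: the same argument works verbatim for the free case, where your method would not, since the $\Sigma$-transform identity is only available near the origin rather than on all of $\D$. One cosmetic slip: the scalar relation $m(s,u)=m(s,t)\,m(t,u)$ comes from comparing coefficients of $z^{2}$ (equivalently dividing by $z^{2}$ and letting $z\to0$), not of $z^{1}$; this does not affect the argument.
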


\begin{proof}
The proof of Lemma \ref{lem:free1} works; we only need to use \cite[Theorem 3.4]{Wan08} instead of \cite[Theorem 1.1]{BB08}. 
\end{proof}

\begin{lemma}\label{lem:boole2} Let $\mu_,\mu_n, n=1,2,3,\dots$ be $\utimes$-ID distributions with non-zero mean and let $(\alpha,\sigma),(\alpha_n,\sigma_n)$ be their generating pairs respectively. Then $\mu_n$ converges to $\mu$ weakly as $n\to \infty$ if and only if $e^{i\alpha_n} \to e^{i\alpha}$ and $\sigma_n \to \sigma$ weakly as $n\to\infty.$
\end{lemma}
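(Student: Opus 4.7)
The plan is to mirror the proof of Lemma \ref{lem:free2}, substituting the $\eta$-transform for the $\Sigma$-transform. Two ingredients are needed: (i) the homeomorphism $\mu\mapsto\eta_\mu$ between probability measures on $\T$ (with weak convergence) and analytic self-maps of $\D$ fixing $0$ (with locally uniform convergence), from \cite[Lemma 2.11]{FHS18}; and (ii) uniqueness of the generating pair $(\alpha,\sigma)$ modulo $2\pi\Z$-translation of $\alpha$, as recorded in the paragraph preceding Theorem \ref{thm:BLK}.

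For the ``if'' direction, assume $e^{i\alpha_n}\to e^{i\alpha}$ and $\sigma_n\to\sigma$ weakly. For each $z\in\D$ the function $\xi\mapsto (1+\xi z)/(1-\xi z)$ is bounded and continuous on $\T$, so weak convergence delivers pointwise convergence of the Carath\'eodory integrals appearing in \eqref{eq:BID}; since $\sigma_n(\T)$ is bounded and $|(1+\xi z)/(1-\xi z)|\le (1+|z|)/(1-|z|)$, the family of these integrals is locally uniformly bounded on $\D$, and Vitali's theorem upgrades pointwise to locally uniform convergence. Combining with $e^{i\alpha_n}\to e^{i\alpha}$ in \eqref{eq:BID} yields $\eta_{\mu_n}\to\eta_\mu$ locally uniformly on $\D$, and (i) then gives $\mu_n\to\mu$ weakly.

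For the ``only if'' direction, assume $\mu_n\to\mu$ weakly. By (i), $\eta_{\mu_n}\to\eta_\mu$ locally uniformly on $\D$, and Cauchy's estimate gives $\eta'_{\mu_n}(0)\to\eta'_\mu(0)$. Differentiating \eqref{eq:BID} at $z=0$ gives $\eta'_\mu(0)=\exp(i\alpha-\sigma(\T))$, which coincides with the mean $\int_\T\xi\,d\mu(\xi)$ and is non-zero by assumption; the same holds for each $\mu_n$. Taking moduli and arguments of the convergence $\eta'_{\mu_n}(0)\to\eta'_\mu(0)$ yields $\sigma_n(\T)\to\sigma(\T)$ and $e^{i\alpha_n}\to e^{i\alpha}$ simultaneously. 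In particular $\{\sigma_n(\T)\}$ is bounded, so by Prohorov's theorem some subsequence $\sigma_{n(k)}$ converges weakly to a finite non-negative measure $\sigma'$ on $\T$. Applying the already-proved ``if'' direction along this subsequence shows that $\mu_{n(k)}$ converges weakly to the $\utimes$-ID distribution $\mu'$ with generating pair $(\alpha,\sigma')$; but the limit must be $\mu$, and uniqueness (ii) forces $\sigma'=\sigma$. Since every weak cluster point of $\{\sigma_n\}$ equals $\sigma$ and the sequence is tight, the whole sequence converges weakly to $\sigma$.

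The argument is structurally identical to the free case; the only place where it is marginally cleaner is the extraction step, since in the boolean setting $\eta'_\mu(0)$ encodes both parameters simultaneously, whereas the $\Sigma$-transform requires evaluating $\Sigma_{\mu_n}(0)$ and using the same quantity. I do not anticipate any real obstacle beyond confirming the aforementioned homeomorphism and uniqueness statements for the boolean framework.
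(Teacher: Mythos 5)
Your proposal is correct and follows essentially the same route as the paper, which simply adapts the proof of Lemma \ref{lem:free2} with the $\eta$-transform homeomorphism \cite[Lemma 2.11]{FHS18} in place of \cite[Proposition 2.9]{BV92}: convergence of transforms for the ``if'' direction, and for the ``only if'' direction reading off $e^{i\alpha_n}$ and $\sigma_n(\T)$ from the value of the transform at the origin (here $\eta_{\mu_n}'(0)=e^{i\alpha_n-\sigma_n(\T)}$, the analogue of $\Sigma_{\mu_n}(0)$), then Prohorov plus uniqueness of the generating pair to identify the limit of $\sigma_n$. No gaps; the details you supply (Vitali/Montel upgrade, identification along subsequences) are exactly the ones implicit in the paper's two-line proof.
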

\begin{proof}
The proof is similar to Lemma \ref{lem:free2}. We can use  \cite[Lemma 2.11]{FHS18} instead of \cite[Proposition 2.9]{BV92}. 
\end{proof}

Imitating Definition \ref{def:c-f} provides with a bijection between classical and boolean continuous CHs. It is a homeomorphism by the boolean analogue of Proposition \ref{prop:conv_free}, so that the new object of continuous $\utimes$-CHs can be added to Fig.\ \ref{fig:star}. The proof is similar and details are omitted. Moreover, under some assumptions on $\alpha_t$ and $\sigma_t$ we can see that this bijection identifies the time-derivative of moments of two convolution hemigroups in a way similar to Section \ref{sec:interpretation}.

\subsection{Embedding of free convolution hemigroups into monotone ones}\label{sec:subordination}

As pointed out by Franz \cite[Corollary 5.3]{Fra09a} after the pioneering work by Biane \cite{Bia98}, a subordination property for free convolution yields an embedding of the set of free CHs into the set of monotone ones.   Related work can be found in \cite{Sch17}, \cite[Sections 4.7 and 5.5]{FHS18}, \cite{Jek20}. 

Let $(\lambda_{s,t})_{t \ge s \ge 0}$ be a continuous $\boxtimes$-CH. There exists a unique probability measure $\mathring{\lambda}_{s,t}$ such that 
$$
\lambda_{0,t}= \lambda_{0,s} \circlearrowright \mathring{\lambda}_{s,t}, \qquad t \ge s \ge 0.    
$$
The family $(\mathring{\lambda}_{s,t})_{t \ge s \ge 0}$ forms a $\circlearrowright$-CH because each mapping $\eta_{\lambda_{0,t}}$ is univalent as a consequence of Lemma \ref{lem:free1} and \cite[Proposition 7.12]{FHS18}. The weak continuity of $(s,t)\mapsto \mathring{\lambda}_{s,t}$ is a consequence of the representation $\eta_{\mathring{\lambda}_{s,t}}= \eta_{\lambda_{0,s}}^{-1} \circ \eta_{\lambda_{0,t}}$ and \cite[Lemma 2.11]{FHS18} and Proposition \ref{prop:inverse_convergence}. 

In this section we compute the generating family of  $(\mathring{\lambda}_{s,t})_{t \ge s \ge 0}$.

\begin{proposition} 
Let $(\lambda_{s,t})_{t \ge s \ge 0}$ be a continuous $\boxtimes$-CH with generating family $(\alpha_t,\sigma_t)_{t\ge0}$ and let $\Sigma$ be the measure on $\T \times [0,\infty)$ associated to $(\sigma_t)_{t\ge0}$. Then the generating family $(\mathring\alpha_t,\mathring\sigma_t)_{t\ge0}$ for the continuous $\circlearrowright$-CH $(\mathring{\lambda}_{s,t})_{t \ge s \ge 0}$ is given by 
\[
\mathring{\alpha}_t = \alpha_t \quad \text{and} \quad \mathring\sigma_t(A) = \int_{\T \times [0,t]} (\delta_\xi \circlearrowright \lambda_{0,s})(A) \Sigma(d\xi ds), \qquad t\ge0,~ A \in \cB(\T). 
\] 
\end{proposition}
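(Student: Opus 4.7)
The plan is to use the uniqueness clause in Theorem \ref{thm:LIE}. Since $\delta_1 \circlearrowright \mathring\lambda_{0,t} = \mathring\lambda_{0,t}$ gives $\mathring\lambda_{0,t} = \lambda_{0,t}$, the multiplicative Loewner chain attached to $(\mathring\lambda_{s,t})_{t\ge s \ge 0}$ is $f_t = \eta_{\lambda_{0,t}}$. The relation $\eta_{\lambda_{0,t}}^{-1}(w) = w\Sigma_{\lambda_{0,t}}(w)$ together with \eqref{eq:FLKT} yields $f_t'(0) = 1/\Sigma_{\lambda_{0,t}}(0) = \exp(i\alpha_t - \sigma_t(\T))$, which identifies $\mathring\alpha_t = \alpha_t$ at once. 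Consequently, the auxiliary chain $g_t(z) = f_t(e^{-i\alpha_t}z)$ from Definition \ref{def:generating} has the explicit inverse
\[
g_t^{-1}(w) = w\exp\left(\int_{\T \times [0,t]}\frac{1+\xi w}{1-\xi w}\,\Sigma(d\xi\, ds)\right).
\]

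The heart of the computation is an algebraic identity. By \eqref{eq:monotone_conv} one has $\eta_{\delta_\xi \circlearrowright \lambda_{0,s}}(z) = \xi f_s(z)$, so the Poisson integral $\int_\T (1+\eta z)/(1-\eta z)\,(\delta_\xi \circlearrowright \lambda_{0,s})(d\eta)$ equals $(1+\xi f_s(z))/(1-\xi f_s(z))$; substituting $z \mapsto e^{-i\alpha_s}z$ and rotating the integration variable gives
\[
\frac{1+\xi g_s(z)}{1-\xi g_s(z)} = \int_\T \frac{1+\eta z}{1-\eta z}\,\tau^{\xi,s}(d\eta),
\]
where $\tau^{\xi,s}$ is the push-forward of $\delta_\xi \circlearrowright \lambda_{0,s}$ under $\eta\mapsto e^{-i\alpha_s}\eta$. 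Define $\mathring\Sigma^\sharp$ on $\T\times[0,\infty)$ by the right-hand side of the proposition and let $\Pi^\sharp$ be its push-forward under $(\eta,t)\mapsto (e^{-i\alpha_t}\eta,t)$; both are locally finite with no mass on $\T\times\{t\}$, thanks to \ref{LK3} for $\Sigma$. Using the identity and Fubini,
\[
Q^\sharp(z,B) := \int_\T \frac{1+\xi z}{1-\xi z}\,\Pi^\sharp(d\xi\times B) = \int_{\T \times B}\frac{1+\xi g_s(z)}{1-\xi g_s(z)}\,\Sigma(d\xi\, ds),
\]
so $\{Q^\sharp(z,\cdot)\}_{z\in\D}$ is a continuous H-family with $\Im Q^\sharp(0,\cdot) = 0$.

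The remaining and main task is to verify the Loewner integral equation $g_t(z) = z - z\int_0^t \partial_z g_s(z)\,Q^\sharp(z,ds)$: by the uniqueness in Theorem \ref{thm:LIE} this forces $\Pi^\sharp = \Pi$ and hence $\mathring\sigma_t = \mathring\Sigma^\sharp(\cdot \times [0,t])$, which is exactly the claim. I would proceed via the time-change already used in Theorems \ref{thm:LIE}--\ref{thm:solution_to_LIE}: set $\beta(t) = \sigma_t(\T)$, $\tau(u) = \sup\{t:\beta(t)=u\}$, and push $\Sigma$ forward under $(\xi,s)\mapsto (\xi,\beta(s))$ to obtain $\tilde\Sigma$, whose $u$-marginal is Lebesgue on $[0,\beta(\infty))$; disintegration then yields $\tilde\Sigma(d\xi\,du) = \nu_u(d\xi)\,du$ with $\nu_u$ probability measures. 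In the new time variable,
\[
g_{\tau(u)}^{-1}(w) = w\exp\left(\int_0^u\int_\T \frac{1+\xi w}{1-\xi w}\,\nu_v(d\xi)\,dv\right)
\]
is absolutely continuous in $u$, so differentiating and inverting produces the Loewner ODE $\partial_u g_{\tau(u)}(z) = -z\partial_z g_{\tau(u)}(z)\int_\T (1+\xi g_{\tau(u)}(z))/(1-\xi g_{\tau(u)}(z))\,\nu_u(d\xi)$, which by the key identity is a genuine order-one Loewner equation in $z$. Integrating in $u$ and changing variables back recovers the integral equation on the range of $\tau$; on each flat interval of $\beta$, Schwarz's lemma makes $g_t$ constant while $\Sigma$ puts no mass there, so the equation extends to all $t\ge0$. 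The obstacle throughout is the mere continuity of $(\alpha_t,\sigma_t)_{t\ge0}$, which rules out direct differentiation in $t$ and makes the time-change reduction to a first-order Herglotz vector field indispensable.
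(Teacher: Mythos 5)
Your proposal is correct, and it reaches the conclusion by a genuinely different mechanism than the paper for the non-smooth case. The algebraic core is shared: both arguments rest on the relation $\eta_{\lambda_{0,t}}^{-1}(z)=z\exp(u_t(z))$ coming from \eqref{eq:FLKT} and on the subordination identity $\int_\T \frac{1+wz}{1-wz}\,(\delta_\xi\circlearrowright\lambda_{0,s})(dw)=\frac{1+\xi\eta_{\lambda_{0,s}}(z)}{1-\xi\eta_{\lambda_{0,s}}(z)}$, which you correctly derive from \eqref{eq:monotone_conv}. The divergence is in how mere continuity of $(\alpha_t,\sigma_t)_{t\ge0}$ is handled: the paper first treats the absolutely continuous case by differentiating the inverse relation to get the Herglotz vector field, and then approximates a general generating family by smooth ones, passing to the limit via Proposition \ref{prop:conv_free}, Theorem \ref{thm:conv_monotone} and the inversion formula; you instead build the candidate measure $\mathring\Sigma^\sharp$ directly, exhibit the corresponding continuous H-family $Q^\sharp(z,B)=\int_{\T\times B}\frac{1+\xi g_s(z)}{1-\xi g_s(z)}\,\Sigma(d\xi\,ds)$, verify the Loewner integral equation for $(g_t)$ by the same time-change trick used in Theorems \ref{thm:LIE} and \ref{thm:solution_to_LIE}, and conclude from the uniqueness of the H-family and of the pair in Proposition \ref{bi-measure}. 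Your route buys independence from the convergence/homeomorphism theorems (and gives the pleasant closed formula for the H-family of $(g_t)$ in terms of $\Sigma$), while the paper's route buys brevity by reusing machinery already established and never has to re-enter the time-changed Loewner analysis. The one step you compress --- ``differentiating and inverting produces the Loewner ODE'' --- is where the real work sits: one must justify absolute continuity of $u\mapsto g_{\tau(u)}(z)$ (which follows, as in the proof of Theorem \ref{thm:LIE}, from $g_{\tau(u)}'(0)=e^{-u}$), a.e.\ differentiability of $u\mapsto H_u(w)$ with exceptional null sets controlled uniformly on compacta, and a chain-rule argument for the composition $H_u(g_{\tau(u)}(z))=z$; this is standard Loewner-theoretic bookkeeping of the same type as Steps 1--2 of Theorem \ref{thm:solution_to_LIE}, so it is not a gap in the approach, but it should be written out (together with the minor point that $(\xi,s)\mapsto(\delta_\xi\circlearrowright\lambda_{0,s})(A)$ is measurable, so that $\mathring\Sigma^\sharp$ is indeed a measure) for the proof to be complete.
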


\begin{proof} 
Let $\eta_t:= \eta_{\lambda_{0,t}} = \eta_{\mathring \lambda_{0,t}}$. One can see from the definition of the $\Sigma$-transform that $\eta_t'(0)= e^{i\alpha_t - \sigma_t(\T)}$. By the definition of the generating family for monotone CHs, we conclude that $\mathring \alpha_t = \alpha_t$. 

Assume that there are continuous functions $\gamma_t$ and $f(\xi,t)$ such that  
\[
\alpha_t = \int_0^t \gamma_s\,ds \qquad  \text{and}\qquad \sigma_t(A) = \int_{A \times [0,t]} f(\xi,s)\,d\xi ds , \qquad A \in \cB(\T). 
\]
Note that in this case we obtain $\Sigma(d\xi dt)=f(\xi,t)d\xi dt$.  Write $\eta_t^{-1}(z) = z \exp(u_t(z))$, where 
\[
u_t(z) =-i \alpha_t + \int_\T \frac{1+\xi z}{1-\xi z}\,\sigma_t(d\xi). 
\]
After some calculus around the relation $\eta_t^{-1}(z) = z e^{u_t(z)}$, we arrive at 
\[
\mathring p(z,t):= -\frac{1}{z}\cdot \frac{\partial_t \eta_t(z)}{\partial_z \eta_t(z)} = (\partial_t u_t) (\eta_t(z)),  
\] 
so that $-z\mathring p(z,t)$ is the Herglotz vector field for the decreasing Loewner chain $(\eta_t)_{t\ge0}$. Now we proceed as 
\[
\mathring p(z,t) = -i \gamma_t + \int_\T \frac{1+\xi \eta_t(z)}{1-\xi \eta_t(z)} f(\xi,t)\,d\xi. 
\]
Using the identity
\begin{equation*}\label{eq:identity}
\frac{1+\xi \eta_t(z)}{1-\xi \eta_t(z)} = \int_{\T} \frac{1+ w z}{1- w z} (\delta_\xi \circlearrowright \lambda_{0,t})(dw)
\end{equation*}
 we arrive at 
\[
\int_0^t \mathring p(z,s) \,ds = -i \alpha_t + \int_{\T\times [0,t]}\left[ \int_\T \frac{1+w z}{1-w z}(\delta_\xi \circlearrowright \lambda_{0,s})(dw)\right] \Sigma(d\xi ds), \qquad z \in \D. 
\]
In view of \eqref{eq:abs_cont2}, we obtain 
\begin{equation}\label{eq:identity1}
\int_\T   \frac{1+w z}{1-w z}   d\mathring \sigma_t(w) = \int_{\T\times [0,t]}\left[ \int_\T \frac{1+w z}{1-w z}(\delta_\xi \circlearrowright \lambda_{0,s})(dw)\right] \Sigma(d\xi ds). 
\end{equation}
By the inversion formula (see e.g.\ \cite[equation (8) in Theorem IV.14, p.145]{Tsuji:1975} or \cite[Lemma 2.8]{FHS18}), the measure $\mathring \sigma_t$ is given as desired. 

In the general case, we approximate $\alpha_t$ and $\Sigma$ by smooth functions $\alpha_t^{(n)}$ and measures $\Sigma^{(n)}$ with smooth densities respectively such that $\alpha_t^{(n)}\to \alpha_t$ locally uniformly and $\sigma_t^{(n)} \to\sigma_t$ weakly for each $t\ge0$. Let $(\lambda_{s,t}^{(n)})_{t\ge s\ge 0}$ denote the $\boxtimes$-CH corresponding to the generating family $(\alpha_t^{(n)}, \sigma_t^{(n)})_{t\ge0}$. Since $\mathring\lambda_{0,t}^{(n)} = \lambda_{0,t}^{(n)}$ and $\mathring\lambda_{0,t} = \lambda_{0,t}$, the family $(\mathring\lambda_{0,t}^{(n)})_{t\ge0}$ converges weakly locally uniformly to $(\mathring\lambda_{0,t})_{t\ge0}$ by Proposition \ref{prop:conv_free}. Therefore, we infer from Theorem \ref{thm:conv_monotone} that $\mathring\sigma_t^{(n)} \to \mathring\sigma_t$ weakly for each $t$. 

We have already established in \eqref{eq:identity1} that 
\begin{equation}\label{eq:approx}
\int_\T  \frac{1+w z}{1-w z}  \,d\mathring\sigma_t^{(n)}(w) = \int_{\T \times [0,t]} \left[ \int_\T  \frac{1+w z}{1-w z}  \,d (\delta_\xi \circlearrowright \lambda_{0,s}^{(n)})(w)\right] \Sigma^{(n)}(d\xi ds). 
\end{equation}
 By Proposition \ref{prop:conv_free} we know that 
\[
\int_\T  \frac{1+w z}{1-w z}  \,d (\delta_\xi \circlearrowright \lambda_{0,s}^{(n)})(w) = \frac{1+\xi \eta_{\lambda_{0,s}^{(n)}}(z)}{1-\xi \eta_{\lambda_{0,s}^{(n)}}(z)} \to \frac{1+\xi \eta_{\lambda_{0,s}}(z)}{1-\xi \eta_{\lambda_{0,s}}(z)} = \int_\T  \frac{1+w z}{1-w z}  \,d (\delta_\xi \circlearrowright \lambda_{0,s})(w)
\]
 uniformly for $(\xi,s) \in \T \times [0,t]$ and a fixed $z$ as $n\to\infty$. Now, letting $n$ tend to infinity in \eqref{eq:approx} yields 
\begin{equation*}
\int_\T  \frac{1+w z}{1-w z}  \,d\mathring\sigma_t(w) = \int_{\T \times [0,t]} \left[ \int_\T  \frac{1+w z}{1-w z}  \,d (\delta_\xi \circlearrowright \lambda_{0,s})(w)\right] \Sigma(d\xi ds),   
\end{equation*}
which leads to the desired conclusion again by the inversion formula. 
\end{proof}

\begin{remark}
Recently Biane proved that $(\mathring{\lambda}_{s,t})_{t\ge s \ge 0}$ is time-homogeneous if and only if $(\lambda_{0,t})_{t\ge0}$ is a convolution semigroup of Poisson kernels or delta measures \cite[Theorem 5.3]{Bia19}. An equivalent statement is that the generating family $(\mathring\alpha_t,\mathring\sigma_t)_{t\ge0}$ above is of the form $\mathring\alpha_t= t \mathring \alpha$ and $\mathring\sigma_t = t \mathring \sigma$ if and only if $\alpha_t=t\alpha$ and $\sigma_t = t c h$ for some $\alpha \in \R$ and $c \ge0$, where $h$ is the normalized Haar measure on $\T$.  In this case we also have $\alpha =\mathring \alpha$ and $\mathring \sigma = ch$. 
\end{remark}

\appendix 

\section{Measure theory}\label{sec:measure}

This section collects basic notions and supplementary facts in measure theory. For further basic notions, see Section \ref{sec:measure0}.  

In this appendix, let $(S,d)$ be a polish space (= complete separable metric space), while some results hold under weaker assumptions (for example, Propositions \ref{prop:portmanteau_finite} and \ref{prop:portmanteau} hold on any metric space $S$).  The function 
\[
d_P(\sigma, \tau) = \inf\{\epsilon>0: \sigma(B) \le \tau(B^\epsilon)+\epsilon \text{~and~} \tau(B) \le \sigma(B^\epsilon)+\epsilon \text{~for all $B\in\cB(S)$}\},  
\]
where $B^\epsilon= \{x \in S: d(x,B) <\epsilon\}$ as before, with the convention that $d(x,\emptyset)=\infty$, defines a metric, called the \emph{Prohorov metric}, on the set of finite non-negative measures on $S$. The Prohorov metric is compatible with the weak convergence, that is, for finite measures $\sigma,\sigma_n (n\ge1)$ on $S$, $\sigma_n$ converges weakly to $\sigma$ if and only if $d_P(\sigma_n,\sigma)$ converges to 0. For the proof, the reader is referred to \cite[Lemma 4.3]{Kal} or \cite[Theorems 11.3.1, 11.3.3]{Dud02}, the latter of which treats the case of probability measures but the proofs can be easily adapted to finite non-negative measures. 

Prohorov's theorem characterizes the relative compactness (or equivalently, sequential compactness thanks to the Prohorov metric) for probability measures (see e.g.\ \cite[Theorem 11.5.4]{Dud02} or \cite[Theorems 6.1 and 6.2]{Bil68}). It can be easily extended to finite non-negative measures. A family $\cF$ of finite non-negative measures on $S$ is referred to as \emph{tight} if for every $\epsilon>0$ there exists a compact subset $K$ such that $\sup_{\sigma \in \cF} \sigma(S\setminus K) < \epsilon$. We say that $\cF$ is \emph{uniformly bounded} if $\sup_{\sigma \in \cF}\sigma(S)<\infty$. 
By definition, if $S$ is compact then any family of finite non-negative measures is tight.

\begin{theorem}[Prohorov's theorem]\label{thm:Prohorov} A family of finite non-negative measures on $S$ is relatively compact if and only if it is tight and uniformly bounded. 
\end{theorem}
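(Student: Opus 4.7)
The plan is to reduce the statement to the classical Prohorov theorem for probability measures on a Polish space (e.g.\ \cite[Theorem 11.5.4]{Dud02}).

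For the necessity direction, suppose $\cF$ is relatively compact. Uniform boundedness follows immediately from the fact that $\sigma \mapsto \sigma(S) = \int_S 1\,d\sigma$ is continuous with respect to weak convergence (take $f\equiv 1$ in the definition of weak convergence): the relatively compact set $\cF$ is mapped to a relatively compact, hence bounded, subset of $\R$. For tightness, I would use the standard Polish-space covering argument. Fix $\epsilon >0$ and a countable dense set $\{x_i\}_{i\ge1}$ in $S$; for each $m\geq 1$ I claim there exists a finite index $N_m$ with $\sup_{\sigma \in \cF}\sigma(S \setminus A_m) < \epsilon 2^{-m}$, where $A_m := \bigcup_{i=1}^{N_m} B(x_i, 1/m)$. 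Indeed, if this failed for some $m$, a diagonal construction would produce a sequence in $\cF$ admitting no weakly convergent subsequence, contradicting relative compactness. Then $K := \bigcap_{m\ge1} \overline{A_m}$ is closed and totally bounded in the complete metric space $S$, hence compact, and satisfies $\sup_{\sigma\in \cF}\sigma(S\setminus K) < \epsilon$.

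For the sufficiency direction, let $\cF$ be tight and uniformly bounded, and pick any sequence $\{\sigma_n\}\subset \cF$. By uniform boundedness, the sequence $c_n := \sigma_n(S)$ has a convergent subsequence; passing to it, we may assume $c_n \to c \in [0,\infty)$. If $c = 0$ then for every bounded continuous $f\colon S \to \C$ one has $|\int_S f\,d\sigma_n| \leq \|f\|_\infty c_n \to 0$, so $\sigma_n$ converges weakly to the zero measure. If $c > 0$, then for all sufficiently large $n$ the normalized probability measures $\tilde\sigma_n := \sigma_n/c_n$ are well defined, and the family $\{\tilde\sigma_n\}$ inherits tightness from $\{\sigma_n\}$ since $c_n$ is bounded away from $0$. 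The classical Prohorov theorem for probability measures then yields a further subsequence $\tilde\sigma_{n_k} \to \tilde\sigma$ weakly for some probability measure $\tilde\sigma$, and hence $\sigma_{n_k} = c_{n_k}\tilde\sigma_{n_k} \to c\tilde\sigma$ weakly as finite non-negative measures.

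The main subtlety is the tightness half of the necessity direction: on a general Polish space one cannot appeal to local compactness or to a direct application of the Portmanteau theorem (the naive estimate on $\sigma_n(K^c)$ for $K^c$ open goes in the wrong direction), and one must instead exploit completeness together with total boundedness via the covering argument sketched above. The sufficiency direction is essentially bookkeeping once the classical probability-measure version is invoked, the only point to verify being that tightness survives the rescaling by $c_n$.
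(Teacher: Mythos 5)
Your proposal is correct and follows exactly the route the paper itself indicates: the paper's ``proof'' is just a citation to Kallenberg plus the remark that one can reduce to the probability-measure case by normalizing by total masses, which is precisely your sufficiency argument, and your necessity direction (continuity of the total mass plus the standard covering/total-boundedness argument on a Polish space) is the standard way to fill in the half the paper delegates to the reference. The only nitpick is that ``a diagonal construction'' is vaguer than needed—one simply extracts a weakly convergent subsequence from the putative counterexample sequence and contradicts the Portmanteau inequality for the open sets $A_m$ together with convergence of total masses—but the argument you intend is the correct one.
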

\begin{proof}
See \cite[Theorem 4.2]{Kal}, or alternatively, using Prohorov's theorem for probability measures, one can also give a short proof by normalizing measures by their total masses. 
\end{proof}

The Portmanteau theorem provides a characterization of convergence of probability measures \cite[Theorem 2.1]{Bil68}. It can be extended to finite non-negative measures. 
 
\begin{proposition}[Portmanteau theorem]\label{prop:portmanteau_finite} Let $\sigma, \sigma_n, n=1,2,3,\dots$ be  finite non-negative measures on $S$. The following statements are equivalent. 

\begin{enumerate}[label=\rm(\arabic*)]
\item\label{item:conv_finite1} $\sigma_n$ converges weakly to $\sigma$, 

\item\label{item:conv_finite2} for every closed subset $C$ of $S$ and open subset $G$ of $S$
\[
\limsup_{n\to\infty} \sigma_n(C) \le \sigma(C) \qquad \text{and} \qquad \liminf_{n\to\infty}\sigma_n(G) \ge \sigma(G), 
\]

\item \label{item:conv_finite3} 
for every Borel subset $B$ of $S$ such that $\sigma(\partial B)=0$,  
\[
\sigma(B) = \lim_{n\to\infty} \sigma_n(B).  
\]
\end{enumerate}
\end{proposition}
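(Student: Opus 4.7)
The plan is to establish the three implications $(1) \Rightarrow (2) \Rightarrow (3) \Rightarrow (1)$, following the classical Portmanteau argument for probability measures but being careful to track total masses since finite measures need not have a common normalization. A preliminary observation I will use throughout is that weak convergence \ref{item:conv_finite1} applied to the constant function $f \equiv 1$ immediately yields $\sigma_n(S) \to \sigma(S)$, so in particular $\sup_n \sigma_n(S) < \infty$; likewise \ref{item:conv_finite3} applied to $B = S$ (for which $\partial S = \emptyset$) gives the same.

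For $(1) \Rightarrow (2)$, I would fix a closed set $C$ and approximate $\mathbf{1}_C$ from above by the bounded continuous functions $f_k(x) := \max(1 - k\, d(x,C),\, 0)$, which decrease pointwise to $\mathbf{1}_C$. Weak convergence gives $\limsup_n \sigma_n(C) \le \limsup_n \int f_k\, d\sigma_n = \int f_k\, d\sigma$ for every $k$; dominated convergence (using that $\sigma$ is finite and $f_k \le 1$) then yields $\int f_k\, d\sigma \to \sigma(C)$ as $k \to \infty$. The open case follows by complementation, combining the closed inequality for $C := S \setminus G$ with $\sigma_n(S) \to \sigma(S)$.

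For $(2) \Rightarrow (3)$, I would let $B$ be a Borel set with $\sigma(\partial B)=0$; writing $B^\circ$ and $\bar B$ for its interior and closure, the equality $\sigma(B^\circ)=\sigma(\bar B)=\sigma(B)$ holds. Applying the two parts of \ref{item:conv_finite2} to $\bar B$ and $B^\circ$ respectively and sandwiching $\sigma_n(B)$ between $\sigma_n(B^\circ)$ and $\sigma_n(\bar B)$ gives the claim directly.

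The main effort lies in $(3) \Rightarrow (1)$, where I would argue as follows. By splitting a complex test function into its real and imaginary parts, it suffices to prove convergence $\int f\, d\sigma_n \to \int f\, d\sigma$ for bounded real $f$; by adding a constant and rescaling I may further assume $0 \le f \le M$. The layer-cake representation
\[
\int f\, d\mu = \int_0^M \mu(\{f > t\})\, dt
\]
reduces the problem to pointwise convergence of the integrands. Since $\partial\{f > t\} \subset \{f = t\}$ and since the finite measure $\sigma$ can charge at most countably many level sets $\{f = t\}$ with positive mass, \ref{item:conv_finite3} applies for Lebesgue-almost every $t \in [0,M]$, giving $\sigma_n(\{f > t\}) \to \sigma(\{f > t\})$. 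The integrands are uniformly bounded by $\sup_n \sigma_n(S) < \infty$ (established above), so dominated convergence on $[0,M]$ concludes the proof. The only delicate point here is ensuring that the approximation is in place before invoking dominated convergence, which is precisely why I first extract the uniform bound on total masses from \ref{item:conv_finite3} applied at $B = S$.
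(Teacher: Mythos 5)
Your proof is correct. Note, though, that the paper does not actually write out this argument: its proof of Proposition \ref{prop:portmanteau_finite} is a two-line sketch whose primary suggestion is to observe that each of the three conditions forces $\sigma_n(S)\to\sigma(S)$, split into the cases $\sigma(S)=0$ and $\sigma(S)>0$, and in the latter case rescale to probability measures so as to invoke the classical Portmanteau theorem; as a second option it says one may instead ``modify the proof of Proposition \ref{prop:portmanteau}'' (the vague-convergence version proved in full just below). What you have done is essentially to carry out that second option in detail: your $(1)\Rightarrow(2)$ uses the same upper approximation $f_k(x)=\max(1-k\,d(x,C),0)$ of $\mathbf 1_C$, your $(2)\Rightarrow(3)$ is the same sandwich between $\sigma(B^\circ)$ and $\sigma(\overline B)$, and your $(3)\Rightarrow(1)$ is the same layer-cake argument (boundaries of level sets lie in $\{f=t\}$, only countably many level sets are charged, dominated convergence with the uniform bound $\sup_n\sigma_n(S)<\infty$ extracted from $(3)$ at $B=S$). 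The trade-off is clear: the paper's rescaling route is shorter if one takes the probability-measure Portmanteau theorem as known, but requires the case distinction at $\sigma(S)=0$ and a little bookkeeping with the normalizations $\sigma_n(S)$; your direct argument avoids both, is self-contained, keeps careful track of the total masses exactly where they are needed (the complementation step in $(1)\Rightarrow(2)$ and the additive shift in $(3)\Rightarrow(1)$), and visibly uses only the metric-space structure of $S$, consistent with the paper's remark that this proposition does not need $S$ to be Polish.
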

\begin{remark} 
For probability measures, one inequality of the two of \ref{item:conv_finite2} implies the other. For finite measures, only one inequality does not guarantee the convergence of the total mass, so both inequalities are needed. 
\end{remark}
\begin{proof}
Note that any of the three statements implies that the total mass $\sigma_n(S)$ converges to $\sigma(S)$. Two cases are possible: $\sigma(S)=0$ and $\sigma(S)>0$. In the former case the proof is easier, and in the latter case one can reduce the problem to probability measures by rescaling.  Alternatively, one can also modify the proof of Proposition \ref{prop:portmanteau} below. The details are left to the reader. 
\end{proof}

A Portmanteau-type theorem also holds for vague convergence as follows. Some part is known in  \cite[Lemma 4.1]{Kal} but we give a complete proof for the sake of convenience.

\begin{proposition}\label{prop:portmanteau} Let $\rho, \rho_n, n=1,2,3,\dots$ be locally finite non-negative measures on $S$. The following statements are equivalent. 

\begin{enumerate}[label=\rm(\arabic*)]
\item\label{item:conv1} $\rho_n$ converges vaguely to $\rho$, 

\item\label{item:conv2} for every open bounded subset $G$ of $S$ and closed bounded subset $C$ of $S$
\[
\rho(G)  \le  \liminf_{n\to\infty}\rho_n(G) \qquad \text{and} \qquad  \rho(C) \ge \limsup_{n\to\infty} \rho_n(C), 
\]

\item \label{item:conv3} 
for every $B \in \cB_b(S)$ such that $\rho(\partial B)=0$  
\[
\rho(B) = \lim_{n\to\infty} \rho_n(B).  
\]
\end{enumerate}
\end{proposition}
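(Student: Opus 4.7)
I would follow the classical Portmanteau scheme but take care throughout that all approximating objects have bounded support, since $\rho$ is only locally finite. The cycle is \ref{item:conv1} $\Rightarrow$ \ref{item:conv2} $\Rightarrow$ \ref{item:conv3} $\Rightarrow$ \ref{item:conv1}.

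For \ref{item:conv1} $\Rightarrow$ \ref{item:conv2}, let $G$ be open bounded. Since $G$ is bounded, I would construct a sequence of continuous functions $f_k \colon S \to [0,1]$ with $\mathrm{supp}(f_k) \subset G$, $f_k \uparrow \mathbf{1}_G$ pointwise; for each $f_k$ the vague convergence gives $\int f_k\,d\rho = \lim_n \int f_k\,d\rho_n \le \liminf_n \rho_n(G)$, and letting $k\to\infty$ and using monotone convergence on the left (finite because $G$ is bounded) yields the first inequality. For closed bounded $C$, pick an open bounded neighborhood $U \supset C$ and construct continuous $f_k$ with $\mathbf{1}_C \le f_k \le \mathbf{1}_U$, $\mathrm{supp}(f_k) \subset U$ bounded, and $f_k \downarrow \mathbf{1}_C$; then $\limsup_n \rho_n(C) \le \lim_n \int f_k\,d\rho_n = \int f_k\,d\rho \downarrow \rho(C)$.

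For \ref{item:conv2} $\Rightarrow$ \ref{item:conv3}, if $B \in \cB_b(S)$ with $\rho(\partial B)=0$, then $B^\circ \subset B \subset \overline{B}$, both are bounded, and \ref{item:conv2} gives
\[
\rho(B)=\rho(B^\circ) \le \liminf_n \rho_n(B^\circ) \le \liminf_n \rho_n(B) \le \limsup_n \rho_n(B) \le \limsup_n \rho_n(\overline{B}) \le \rho(\overline{B}) = \rho(B).
\]

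For \ref{item:conv3} $\Rightarrow$ \ref{item:conv1}, take a bounded continuous $f \colon S \to \C$ with $\mathrm{supp}(f) \subset K$, $K$ bounded; by splitting real and imaginary parts and then positive and negative parts, reduce to $f \ge 0$. Choose a bounded open set $V \supset K$ with $\rho(\partial V)=0$; such $V$ exists because the boundaries of $\{x : d(x,K) < r\}$ are contained in the pairwise disjoint level sets $\{x : d(x,K) = r\}$, only countably many of which can carry positive $\rho$-mass (they all lie in the bounded set where $\rho$ is finite). Applying \ref{item:conv3} to $V$ gives a uniform bound $\sup_n \rho_n(V) < \infty$. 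Use the layer-cake representation $\int f\,d\mu = \int_0^{\|f\|_\infty} \mu(\{f>t\})\,dt$ for $\mu\in\{\rho,\rho_n\}$: each level set $\{f>t\}$ is open, contained in $K \subset V$, hence bounded, with $\partial\{f>t\} \subset \{f=t\}$. The disjoint level sets $\{f=t\}_{t>0}$ lie inside $K$, so $\rho(\{f=t\})>0$ for at most countably many $t$; for almost every $t$, \ref{item:conv3} gives $\rho_n(\{f>t\})\to\rho(\{f>t\})$. With the uniform bound $\rho_n(\{f>t\}) \le \rho_n(V)$, dominated convergence in $t$ concludes.

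\textbf{Main obstacle.} The delicate point is the final implication: in the classical finite-measure Portmanteau theorem one simply truncates $f$ and uses its bounded total variation, but in the vague setting one must manufacture a bounded ``reference'' set $V$ containing the support of $f$ for which $\rho(\partial V)=0$ and hence, via \ref{item:conv3}, for which $\sup_n \rho_n(V) < \infty$. Without such a $V$, the sequence $\rho_n$ restricted to the support of $f$ is not a priori uniformly bounded, so dominated convergence in the layer-cake step cannot be applied. The ``nested spheres around $K$'' trick, exploiting local finiteness of $\rho$ near $K$, is what makes this work.
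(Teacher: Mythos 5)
Your proof is correct and follows essentially the same route as the paper's: the same cycle of implications, the same truncated test-function approximations of $\mathbf{1}_G$ and $\mathbf{1}_C$ for \ref{item:conv1} $\Rightarrow$ \ref{item:conv2}, the same sandwich via $\mathring{B}$ and $\overline{B}$ for \ref{item:conv2} $\Rightarrow$ \ref{item:conv3}, and the same layer-cake argument with a bounded reference set of $\rho$-null boundary (the paper uses a ball, you use a distance neighborhood of the support) plus countability of ``charged'' level sets and dominated convergence for \ref{item:conv3} $\Rightarrow$ \ref{item:conv1}. No gaps to report.
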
 
\begin{proof} 
\underline{\ref{item:conv1} $\Rightarrow$ \ref{item:conv2}.}  
For a closed bounded subset $C$ of $S$, take continuous functions $f_k, k=1,2,3,\dots$ such that $  \mathbf1_{C} \le f_k \le 1$, $f_k \to \mathbf1_{C}$ pointwisely, and the supports of $f_k$ are uniformly bounded. For example,  
set $f_k(x) = 0 \vee [1-k d(x,C)]$. 
Then we have
$$
\limsup_{n\to\infty}\rho_n(C) \le \limsup_{n\to\infty}\int_S f_k(x)\,d\rho_n(x)  = \int_S f_k(x)\,d\rho(x), 
$$ 
and the dominated convergence theorem yields that the RHS converges to $\rho(C)$ as $k\to\infty$. 

For an open bounded subset $G$ of $S$,  take continuous functions $g_k, k=1,2,3,\dots$ such that $0 \le g_k \le \mathbf 1_G$, $g_k \to \mathbf1_G$ pointwisely, and their supports are uniformly bounded. For example, set $g_k(x)= 1 \wedge d(x,S\setminus G)^{\frac1{k}}$. Then 
$$
\liminf_{n\to\infty}\rho_n(G) \ge \liminf_{n\to\infty}\int_S g_k(x)\,d\rho_n(x)  = \int_S g_k(x)\,d\rho(x), 
$$ 
and the dominated convergence theorem yields that the RHS converges to $\rho(G)$ as $k\to\infty$.

\vspace{2mm}
\noindent
\underline{\ref{item:conv2} $\Rightarrow$ \ref{item:conv3}.} For every $B\in\cB_b(S)$ such that $\rho(\partial B)=0$ we have
$
 \rho(\mathring{B})= \rho(B) =\rho(\overline{B}), 
$
where $\mathring{B}$ is the interior of $B$ and $\overline{B}$ is the closure of $B$. Since $\mathring{B}$ is open bounded and $\overline{B}$ is closed bounded,  
\begin{align*}
\rho(B) &= \rho(\mathring{B}) \le \liminf_{n\to\infty} \rho_n(\mathring{B}) \le \liminf_{n\to\infty} \rho_n(B) \\
& \le\limsup_{n\to\infty} \rho_n(B)\le \limsup_{n\to\infty} \rho_n(\overline{B}) \le \rho(\overline{B}) = \rho(B), 
\end{align*}
so that the desired conclusion follows.

\vspace{2mm}
\noindent
\underline{\ref{item:conv3} $\Rightarrow$ \ref{item:conv1}.} The arguments follow the lines of the proof of \cite[Lemma 4.1]{Kal}. Let $f\colon S\to \C$ be a bounded continuous function with bounded support. We may assume that $f\ge0$. Set the notation $\{f>t\}:=\{x \in S: f(x) >t\}$ and $\{f=t\}:=\{x \in S: f(x) = t\}$ for $t\ge0$ and $\|f\|_\infty:=\sup \{|f(x)|: x\in S\} .$ 
We will use the well known formula 
\[
\int_S f(x)\,d\rho(x) =\int_{[0,\infty)} \rho(\{f>t\})\,dt
\]
which follows from Tonelli's theorem applied to the function $F(x,t):=\mathbf1_{(0,\infty)}(f(x)-t). $

For all $t\ge0$, it holds that $\partial \{f>t\} \subset \{f=t\}$. Since 
$
\bigcup_{t >0}\{f=t\} 
$
is a support of $f$ and is bounded, it is of finite mass with respect to $\rho$, so that there are at most countably many $t > 0$ for which $\rho(\partial \{f>t\})>0$. In particular, by the assumption \ref{item:conv3}, $\lim_{n\to\infty}\rho_n(\{f>t\}) = \rho(\{f>t\})$ for a.e.\ $t\ge0$. By a similar argument, we can find a ball $B=\{x \in S: d(x_0,x)<r\}$ with $r>0$ such that $\{f>0\} \subset B$ and $\rho(\partial B)=0$. 
Observe that $\rho_n(\{f>t\}) \le \rho_n(\{f>0\}) \le \rho_n (B)$ for all $n \in \N$ and $t\ge0$, and $\sup_{n\in\N}\rho_n(B)<\infty$ since $\lim_{n\to\infty}\rho_n(B)=\rho(B)$. By the dominated convergence theorem, 
\[
\int_S f(x)\,d\rho_n(x) = \int_{[0, \|f\|_\infty]} \rho_n(\{f>t\})\,dt \to \int_{[0,\|f\|_\infty]} \rho(\{f>t\})\,dt =\int_S f(x)\,d\rho(x),  
\]
as desired. 
 \end{proof}

We close this section by proving a simple fact on the sum of measures.

\begin{lemma}\label{lem:limits}
Let $(X, \mathcal F)$ be a measurable space. For a sequence of measures $\{\mu_i\}_{i\ge1}$ on $(X, \mathcal F)$ define a measure $\mu :=\sum_{i\ge1} \mu_i$. 

\begin{enumerate}[label=\rm(\arabic*)]
\item\label{item:limits1} For every measurable function $f\colon X \to [0,\infty] $ we have 
$
\int_X f d\mu= \sum_{i\ge1} \int_X f d\mu_i. 
$
\item\label{item:limits2} Let $g \colon X \to \C$ be $\mu$-integrable (or equivalently, $ \sum_{i\ge1} \int_X |g| d\mu_i<\infty$, due to \ref{item:limits1}). Then we have $
\int_X g d\mu= \sum_{i\ge1} \int_X g d\mu_i. 
$
\end{enumerate}
\end{lemma}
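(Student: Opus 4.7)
\smallskip

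\noindent\textbf{Proof plan for Lemma \ref{lem:limits}.}

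For part \ref{item:limits1}, the plan is the standard four-step ascent from indicators to nonnegative measurable functions. First I would verify the identity for indicator functions $f=\mathbf 1_A$, $A\in\mathcal F$: by the very definition $\mu(A)=\sum_{i\ge 1}\mu_i(A)$, so $\int_X \mathbf 1_A\,d\mu = \sum_{i\ge 1}\int_X \mathbf 1_A\,d\mu_i$. Next, extend to nonnegative simple functions by linearity of the integral (finite sums of indicators). Then extend to a general nonnegative measurable $f$ by choosing an increasing sequence of nonnegative simple functions $s_k\uparrow f$ and applying monotone convergence on both sides: for each $i$, $\int_X s_k\,d\mu_i\uparrow \int_X f\,d\mu_i$, and similarly for $\mu$. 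The one place that requires a small argument is the exchange of the two limits, namely
\[
\lim_{k\to\infty}\sum_{i\ge 1}\int_X s_k\,d\mu_i = \sum_{i\ge 1}\lim_{k\to\infty}\int_X s_k\,d\mu_i,
\]
which is valid because the double sequence $a_{i,k}:=\int_X s_k\,d\mu_i$ is nonnegative and nondecreasing in $k$ for each $i$; this is the ``Tonelli for sums'' (equivalently, monotone convergence applied to counting measure on $\N$).

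For part \ref{item:limits2}, the plan is to reduce to part \ref{item:limits1} via the usual decomposition. Write $g=(\Re g)^+ - (\Re g)^- + i(\Im g)^+ - i(\Im g)^-$ and apply \ref{item:limits1} to each of the four nonnegative measurable functions. The $\mu$-integrability assumption gives $\int_X |g|\,d\mu<\infty$ which, by \ref{item:limits1} applied to $|g|$, equals $\sum_{i\ge 1}\int_X |g|\,d\mu_i$; this ensures each of the four resulting series converges absolutely, so they can be combined back into a single series $\sum_{i\ge 1}\int_X g\,d\mu_i$ with the same limit as $\int_X g\,d\mu$. The equivalence claimed parenthetically in the statement is also immediate from \ref{item:limits1} applied to $|g|$.

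There is no serious obstacle here; the only minor subtlety is the double-limit exchange in the monotone convergence step of part \ref{item:limits1}, which is handled by the nonnegativity and monotonicity of $a_{i,k}$. The rest is bookkeeping.
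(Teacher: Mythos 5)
Your proposal is correct and follows essentially the same route as the paper: approximate $f$ by an increasing sequence of nonnegative simple functions, apply monotone convergence, and justify the interchange of the limit in $k$ with the sum over $i$ (the paper does this by a two-sided truncation/domination inequality, which amounts to your ``Tonelli for sums''), with part \ref{item:limits2} reduced to part \ref{item:limits1} by decomposition into nonnegative parts. No gaps; the paper merely leaves the part \ref{item:limits2} reduction as ``obvious,'' which you spell out.
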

\begin{proof}
The second statement is an obvious consequence of the first one. For the first statement, take a sequence of nonnegative simple functions $f_n= \sum_{k=1}^{d_n} a_{n,k} 1_{A_k}$ such that $f_n\uparrow f$. 
For every $m \in \N$ we obtain 
\begin{align*}
\int_X f d\mu &= \lim_{n\to\infty} \sum_{k=1}^{d_n} a_{n,k} \mu(A_k) =  \lim_{n\to\infty} \sum_{i=1}^\infty \sum_{k=1}^{d_n} a_{n,k} \mu_i(A_k) \\
&\ge  \lim_{n\to\infty} \sum_{i=1}^m \sum_{k=1}^{d_n} a_{n,k} \mu_i(A_k) =  \sum_{i=1}^m \int_X f d\mu_i,  
\end{align*}
and hence 
$$
\int_X f d\mu \ge \sum_{i\ge1} \int_X f d\mu_i. 
$$
On the other hand, 
$$
\int_X f d\mu  =  \lim_{n\to\infty} \sum_{i=1}^\infty \sum_{k=1}^{d_n} a_{n,k} \mu_i(A_k) =  \lim_{n\to\infty} \sum_{i=1}^\infty \int_X f_n d\mu_i \leq\sum_{i=1}^\infty \int_X f d\mu_i,  
$$
which completes the proof. 
\end{proof}

\section{Univalent mappings}\label{sec:univalent}

\begin{proposition}\label{prop:inverse_convergence} Let $f, f_n\colon \D \to \C$ be univalent mappings for $n=1,2,3,\dots$ such that $f_n \to f$ locally uniformly. 
 \begin{enumerate}[label=\rm(\arabic*)]
\item For every compact subset $K$ of $f(\D)$ there exists $n_0 \in \N$ such that $K \subset f_n(\D)$ for all $n \ge n_0$. 
\item $f_n^{-1}\colon f_n(\D)\to \D$ converges locally uniformly on $f(\D)$. 
\end{enumerate}
\end{proposition}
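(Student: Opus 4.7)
My plan is to reduce both statements to a single local Rouch\'e-type fact and then run a compactness argument twice. For each $w_0 \in f(\D)$ set $z_0 = f^{-1}(w_0)$ and pick $\epsilon > 0$ with $\overline{D(z_0, \epsilon)} \subset \D$. Univalence of $f$ gives $m := \min_{|z-z_0|=\epsilon} |f(z) - w_0| > 0$, and locally uniform convergence on the compact circle $\{|z - z_0| = \epsilon\}$ yields $N \in \N$ such that $|f_n - f| < m/2$ there for every $n \ge N$. Then for any $w$ with $|w - w_0| < m/2$ and any $n \ge N$, on that circle
\[
|(f_n(z) - w) - (f(z) - w_0)| \le |f_n(z) - f(z)| + |w - w_0| < m \le |f(z) - w_0|,
\]
so Rouch\'e's theorem yields exactly one zero of $f_n - w$ inside $D(z_0, \epsilon)$. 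In other words $D(w_0, m/2) \subset f_n(\D)$ for all $n \ge N$, and the unique preimage $f_n^{-1}(w)$ lies in $D(z_0, \epsilon)$.

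For part (1), I would cover a given compact $K \subset f(\D)$ by finitely many of the open disks $D(w_i, m_i/2)$, $i = 1, \dots, k$, produced by the local step, and set $n_0 := \max_i N_i$; then $K \subset \bigcup_i D(w_i, m_i/2) \subset f_n(\D)$ for all $n \ge n_0$.

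For part (2), I would fix compact $K \subset f(\D)$ and $\delta > 0$, and refine the local step by choosing each $\epsilon_i < \delta/2$ and, using continuity of $f^{-1}$ on $f(\D)$ (a classical consequence of univalence plus the open mapping theorem), by further shrinking the effective radii so that $|f^{-1}(w) - f^{-1}(w_i)| < \delta/2$ whenever $w$ lies in the $i$-th covering disk. For $w \in K$ and $n \ge n_0 := \max_i N_i$, picking $i$ with $w$ in the $i$-th disk, the local step puts $f_n^{-1}(w) \in D(f^{-1}(w_i), \epsilon_i)$, and the continuity estimate gives $f^{-1}(w) \in D(f^{-1}(w_i), \delta/2)$, whence $|f_n^{-1}(w) - f^{-1}(w)| < \delta$. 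This delivers uniform convergence on $K$.

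The only genuine subtlety is ensuring the local Rouch\'e parameters $m_i$, $\epsilon_i$, $N_i$ can be coordinated so as to produce a single uniform $n_0$ valid across all of $K$; the finite subcover handles this cleanly, and the remainder of the argument is bookkeeping.
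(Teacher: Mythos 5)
Your proposal is correct, but it takes a genuinely different route from the paper. The paper handles part (1) by citing the Carath\'eodory kernel convergence of the ranges $f_n(\D)$ (after normalizing so that $f_n(0)=f(0)=0$ and $f'(0),f_n'(0)>0$), and handles part (2) by choosing $r\in(0,1)$ with $K\subset f(r\D)$ and $K\subset f_n(r\D)$ for large $n$ and then writing the inverses via the Lagrange/Cauchy inversion integral
\[
f_n^{-1}(w)=\frac{1}{2\pi i}\int_{r\T}\frac{z\,f_n'(z)}{f_n(z)-w}\,dz,
\]
from which uniform convergence on $K$ follows since the integrands converge uniformly (using $f_n'\to f'$ locally uniformly and the denominators being uniformly bounded away from zero). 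You instead prove everything from scratch with a local Rouch\'e argument: the comparison $|(f_n(z)-w)-(f(z)-w_0)|<m\le|f(z)-w_0|$ on the circle is the standard Hurwitz-type estimate, univalence of $f_n$ guarantees that the zero produced inside $D(z_0,\epsilon)$ is the global preimage $f_n^{-1}(w)$, and the finite subcover (re-chosen for each $\delta$ in part (2), which is legitimate) coordinates the local parameters into a single $n_0$; combined with continuity of $f^{-1}$ this gives the uniform estimate $|f_n^{-1}(w)-f^{-1}(w)|<\delta$ on $K$. In effect you reprove the relevant direction of the kernel theorem rather than quoting it, which makes the argument more elementary and self-contained, whereas the paper's citation-plus-integral-formula route is shorter and yields an explicit representation of $f_n^{-1}$ that is convenient for quantitative estimates. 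Both arguments are sound.
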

\begin{proof} The first statement follows from the kernel convergence of the ranges $f_n(\D)$; see \cite[Problem 3, p.31]{Pom75} or \cite[Theorem 6.1]{Yan}, the latter of which contains a detailed proof. Note that we can choose 0 as a reference point and we may assume that $f_n(0)=f(0)=0$ and $f'(0), f_n'(0)>0$ by considering $[f_n(z)-f_n(0)]/f_n'(0)$ and $[f(z)-f(0)]/f'(0)$. 

For the second statement, take the compact subset $K$ of $f(\D)$ above. There exists $r\in (0,1)$ such that $K \subset f(r\D) $ and $K \subset f_n(r\D)$ for all $n\ge n_0$. By the Lagrange inversion formula, we have
\[
f_n^{-1}(w) = \frac{1}{2\pi i} \int_{r\T} \frac{z f_n'(z)}{f_n(z)-w}\,dz, \qquad w \in K,   n \ge n_0 
\]
and a similar formula for $f^{-1}$. The desired conclusion readily follows from those formulas. 
\end{proof}

\begin{theorem}\label{thm:estimate_LC}
For a multiplicative Loewner chain $(f_{t})_{t \ge 0}$, we have
\begin{equation}
\label{eq:estimate_LC1}
|f_{s}(z)-f_{t}(z)| \le \frac{8\alpha|z|}{(1-|z|)^{4}} + \frac{4 |\beta| |z|}{(1-|z|)^{3}},\qquad 0 \le s \le t, ~z \in \D, 
\end{equation}
where
\[
\alpha := \Re \frac{f_{s}'(0)-f_{t}'(0)}{f_{s}'(0)+f_{t}'(0)}\ge0, \quad \beta := \Im \frac{f_{s}'(0)-f_{t}'(0)}{f_{s}'(0)+f_{t}'(0)} \in \R.
\]
In particular, if $f_{t}'(0) > 0$ for all $t \ge 0$, then
\begin{equation}
\label{eq:estimate_LC2}
|f_{s}(z)-f_{t}(z)| \le \frac{8|z|}{(1-|z|)^{4}}\left( \frac{1}{f_t'(0)} - \frac{1}{f_s'(0)}\right), \qquad 0 \le s \le t, ~z \in \D.
\end{equation}
\end{theorem}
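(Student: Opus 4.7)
The plan is to exploit the factorization $f_t = f_s \circ f_{s,t}$ from \ref{TM2}, so that $f_s(z) - f_t(z) = f_s(z) - f_s(\phi(z))$ with $\phi := f_{s,t}$. Since $\phi \colon \D \to \D$ with $\phi(0) = 0$, Schwarz's lemma gives $|\phi(z)| \le |z|$, and its derivative $\lambda := \phi'(0) = f_t'(0)/f_s'(0)$ lies in $\overline{\D}$. The goal is to estimate the two ingredients $|f_s(z) - f_s(\phi(z))|$ and $|z - \phi(z)|$ separately, then translate to the parameters $\alpha, \beta$.

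For the first ingredient, I would apply Cauchy's integral formula to $f_s$ on the circle $|\zeta|=r$ with $|z| < r < 1$ and use $|f_s(\zeta)| \le |\zeta|$ (Schwarz, since $f_s(0)=0$). A direct computation gives
\[
|f_s(z) - f_s(\phi(z))| \le \frac{r^2\,|z - \phi(z)|}{(r-|z|)(r-|\phi(z)|)},
\]
so letting $r \uparrow 1$ and using $|\phi(z)| \le |z|$ yields $|f_s(z) - f_s(\phi(z))| \le |z-\phi(z)|/(1-|z|)^2$. For the second, consider the holomorphic function $h(z) := \phi(z)/z$ on $\D$ (removable singularity at $0$), which satisfies $h(0) = \lambda$ and $|h| \le 1$. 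If $|\lambda|=1$, the maximum principle forces $h \equiv \lambda$, i.e., $\phi(z) = \lambda z$, giving $|z-\phi(z)| = |z|\,|1-\lambda|$ at once; otherwise $h \colon \D \to \D$ and Schwarz-Pick produces the representation
\[
h(z) = \frac{\lambda + z\eta(z)}{1 + \bar\lambda z\eta(z)}
\]
for some holomorphic $\eta \colon \D \to \overline{\D}$. A brief algebraic manipulation gives
\[
1 - h(z) = \frac{(1-\lambda) - z \eta(z)\, \overline{(1-\lambda)}}{1 + \bar\lambda z \eta(z)},
\]
whence $|1 - h(z)| \le 2|1-\lambda|/(1-|z|)$ and therefore $|z - \phi(z)| \le 2|z||1-\lambda|/(1-|z|)$.

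Combining the two estimates yields $|f_s(z) - f_t(z)| \le 2|z||1-\lambda|/(1-|z|)^3$. To pass to $\alpha, \beta$, note that $1-\lambda = (\alpha+i\beta)(1+\lambda)$, so $|1-\lambda| \le 2|\alpha+i\beta| \le 2\alpha + 2|\beta|$ (using $\alpha \ge 0$), which already gives
\[
|f_s(z) - f_t(z)| \le \frac{4\alpha|z|}{(1-|z|)^3} + \frac{4|\beta||z|}{(1-|z|)^3};
\]
the stated bound \eqref{eq:estimate_LC1} follows from the elementary $(1-|z|)^{-3} \le 2(1-|z|)^{-4}$ applied to the $\alpha$-term. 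Finally, to deduce \eqref{eq:estimate_LC2}, observe that under $f_s'(0), f_t'(0) > 0$ we have $\beta = 0$ and $\alpha = (a-b)/(a+b)$ with $a := f_s'(0), b := f_t'(0) \in (0,1]$ (Schwarz applied to $f_s, f_t$); the elementary inequality $a+b \ge ab$ for $a,b \in (0,1]$ gives $\alpha \le (a-b)/(ab) = 1/b - 1/a$, and \eqref{eq:estimate_LC2} follows from \eqref{eq:estimate_LC1}.

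The main technical obstacle will be the Schwarz-Pick step and the ensuing clean bound $|1-h(z)| \le 2|1-\lambda|/(1-|z|)$; the remaining steps are essentially routine applications of the Schwarz lemma, Cauchy's integral formula, and the Möbius identity $1-\lambda = (\alpha+i\beta)(1+\lambda)$.
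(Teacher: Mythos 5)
Your proof is correct, and while it follows the same overall skeleton as the paper --- write $f_t = f_s \circ f_{s,t}$, bound $|z - f_{s,t}(z)|$ in terms of the normalized derivative data, and then bound the resulting increment of $f_s$ --- it replaces both of the paper's key tools. For the increment of $f_s$, the paper invokes the univalence of $f_s$ (via \cite[Theorem 3.16]{FHS18}) and the Koebe-type distortion bound $|f_s'(u)| \le |f_s'(0)|\tfrac{1+|u|}{(1-|u|)^3}$ from \cite[Theorem 1.6]{Pom75}, integrating along the segment from $f_{s,t}(z)$ to $z$; you instead use Cauchy's integral formula together with the Schwarz bound $|f_s(\zeta)|\le|\zeta|$, which needs no univalence at all and gives the slightly better factor $(1-|z|)^{-2}$, so your intermediate estimate $\frac{4\alpha|z|}{(1-|z|)^3}+\frac{4|\beta||z|}{(1-|z|)^3}$ is in fact marginally sharper than \eqref{eq:estimate_LC1}. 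For the quantity $|z-f_{s,t}(z)|$, the paper applies the Herglotz representation to $q(z)=\frac{z-f_{s,t}(z)}{z+f_{s,t}(z)}$, whose value at $0$ is exactly $\alpha+i\beta$, obtaining the bound in $(\alpha,\beta)$ directly; you instead apply Schwarz--Pick to $h(z)=f_{s,t}(z)/z$ to get $|z-f_{s,t}(z)|\le \tfrac{2|z||1-\lambda|}{1-|z|}$ with $\lambda=f_t'(0)/f_s'(0)$, and then convert via $1-\lambda=(\alpha+i\beta)(1+\lambda)$ and $|\lambda|\le 1$. Your handling of the boundary case $|\lambda|=1$ by the maximum principle and your deduction of \eqref{eq:estimate_LC2} via $a+b\ge ab$ for $a,b\in(0,1]$ are both fine (the latter is essentially the same elementary inequality the paper uses). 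In short: same decomposition, genuinely different lemmas; yours is more elementary (no distortion theorem, no univalence) and a bit sharper, while the paper's Herglotz route produces the $(\alpha,\beta)$-dependence in one step.
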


\begin{proof}
Let $f_{st} := f_{s}^{-1}\circ f_{t}$. 
First we estimate $|z- f_{st}(z)|$. By the Schwarz lemma, the inequality $|f_{st}(z)/z|\le1$ holds and hence 
$$
q(z):= \frac{z- f_{st}(z)}{z + f_{st}(z)}: \D \to \{\Re \,z \ge 0\}, 
$$
where $q(0):=\alpha+i\beta $. Thus the Herglotz representation \cite[Theorem 2.4]{Pom75}
\[
q(z) = i\beta + \int_\T \frac{\xi+z}{\xi -z} \,d\rho(\xi), 
 \]
holds for a finite measure $\rho$ on $\T$ with the total mass $\alpha$, which implies that 
\[
|q(z)| \le |\beta| + \alpha \frac{1+|z|}{1-|z|}. 
\]
Consequently, we obtain 
\begin{equation}
\label{z-f_st}
|z - f_{st}(z)| \le 2|z|\left(\alpha\frac{1+|z|}{1-|z|} + |\beta|\right).
\end{equation}
Now making use of \cite[Theorem 1.6 in p.21]{Pom75} and \eqref{z-f_st} one obtains
\begin{align*}
|f_{s}(z) - f_{t}(z)|  &= |f_{s}(z) - f_{s}(f_{st}(z))| = \left|\int_{f_{st}(z)}^{z}f_{s}'(u)\, du\right|\\
&\le |z -f_{st}(z)|  |f_{s}'(0)|\dfrac{1+|z|}{(1-|z|)^{3}} \le 
 \frac{8\alpha|z|}{(1-|z|)^{4}} + \frac{4 |\beta| |z|}{(1-|z|)^{3}}, 
\end{align*}
which implies \eqref{eq:estimate_LC1}. For the last statement 
\eqref{eq:estimate_LC2}, note that $f_{s}'(0) = f_t'(0)/f_{st}'(0) \ge f_t'(0)$ by the Schwarz lemma and the simple inequality $(b-a)/(b+a)\le (1/a) - (1/b)$ holds for $0<a \le b \le 1$. 
\end{proof}


\end{document}